\newtheorem{thm}{Theorem}[section]
\newtheorem{prop}[thm]{Proposition}
\newtheorem{cor}[thm]{Corollary}
\newtheorem{lem}[thm]{Lemma}
\newtheorem{conj}[thm]{Conjecture}
\newcommand{\II}{{\mathbb{I}}}
\newcommand{\NN}{{\mathbb{N}}}
\newcommand{\ZZ}{{\mathbb{Z}}}
\newcommand{\RR}{{\mathbb{R}}}
\newcommand{\cB}{{\mathcal{B}}}
\newcommand{\cC}{{\mathcal{C}}}
\newcommand{\cD}{{\mathcal{D}}}
\newcommand{\cS}{{\mathcal{S}}}
\newcommand{\bb}{{\mathbf{b}}}
\newcommand{\bc}{{\mathbf{c}}}
\newcommand{\bd}{{\mathbf{d}}}
\newcommand{\bg}{{\mathbf{g}}}
\newcommand{\bx}{{\mathbf{x}}}
\newcommand{\by}{{\mathbf{y}}}
\newcommand{\bz}{{\mathbf{z}}}
\newcommand{\cl}{c}
\newcommand{\lh}{l}
\newcommand{\diam}{\operatorname{diam}}
\newcommand{\xl}{\operatorname{xl}}
\newcommand{\st}{\operatorname{st}}
\newcommand{\ga}{\operatorname{ga}}
\newcommand{\Vol}{\operatorname{Vol}}
\def\Pr{\mathop{\rm Pr}\nolimits}
\newcommand{\mi}{I}
\newcommand{\mj}{J}
\begin{document}
\title{Scaling limits of discrete copulas \\
are bridged Brownian sheets}
\author{Juliana Freire, Nicolau C. Saldanha and Carlos Tomei}
\maketitle

\begin{abstract}
For large $n$, take a random $n \times n$ permutation matrix
and its associated discrete copula $X_n$.
For $a, b = 0, 1, \ldots, n$, 
let $\by_n(\frac{a}{n},\frac{b}{n}) =
\frac{1}{n} \left(X_{a,b} - \frac{ab}{n}\right)$;
define $\by_n: [0,1]^2 \to \RR$ by
interpolating quadratically on squares of side $\frac{1}{n}$.
We prove a Donsker type central limit theorem:
$\sqrt{n} \by_n$ approaches a bridged Brownian sheet on the unit square.
\end{abstract}

\bigskip

\begin{narrower}

\noindent {\em MSC2010}:
Primary 60C05, 60F05, 60J65; Secondary 60B20.

\noindent {\em Keywords}:
Discrete copulas, Brownian sheets, Donsker theorem.

\end{narrower}

\bigskip

\section{Introduction}
\label{sect:intro}

Mostly following \cite{MST}, a \textit{discrete copula}
is a square matrix $C$ with indices $a, b \in \{0, 1, \ldots, n\}$
and \textit{integer} entries satisfying
\begin{enumerate}[(i)]
\item{$C_{a,0}=C_{0,b}=0$ for all $a, b$;}
\item{$C_{a,n}=C_{n,a}=a$ for all $a$;}
\item{$C$ is $2$-increasing, i.e., 
if $0 \le a \le a' \le n$ and $0 \le b \le b' \le n$, then
\[ C_{a',b'} + C_{a,b} \ge C_{a',b} + C_{a,b'}. \]}
\end{enumerate}
Let $S_n$ be symmetric group
interpreted as the set of all $n \times n$ permutation matrices;
also, denote the set of discrete copulas by  $\cS_n$.
As is well known (\cite{MST}),
there is a natural bijection $S_n \to \cS_n$
with $M \mapsto C$ where $C$ is obtained by (discrete) integration:
\[ C_{a,b} = \sum_{a' \le a; \; b' \le b} M_{a',b'}\,. \]
Notice that if $a = 0$ or $b = 0$ the sum is empty and $C_{a,b} = 0$.

A continuous copula is a $1$-Lipschitz function $\bc: \II^2 \to \II$
which describes the joint distribution
of two real-valued random variables $\bz_0$ and $\bz_1$,
assumed here to be both uniform in $\II = [0,1]$
(for basic information about copulas, see \cite{Nelsen}).
If $\bz_0$ and $\bz_1$ are independent then 
$\bc$ is the product copula $\bc_0(u,v) = uv$.
Being associated with a permutation matrix,
a discrete copula describes the joint distribution
of two discrete random variables $Z_0$ and $Z_1$,
both uniform in $\{1, 2, \ldots, n\}$,
when one is a function of the other.
Let $C_0$, the \textit{product copula}, be the baricenter of $\cS_n$,
so that $(C_0)_{a,b} = \frac{ab}{n}$;
it is analogous to the (continuous) product copula
and indicates that $Z_0$ and $Z_1$ are independent.
Clearly, $C_0$ is not a discrete copula:
it is an example of Birkhoff copula,
a concept to which we shall return in Section \ref{sect:birkhoff}.
We scale each discrete copula $C$ 
to obtain a continuous copula $\bc: \II^2 \to \II$ satisfying
\[ \bc\left(\frac{a}{n}, \frac{b}{n}\right) = \frac{C_{a,b}}{n}, \]
and interpolated on small squares
$[\frac{a-1}{n},\frac{a}{n}] \times [\frac{b-1}{n},\frac{b}{n}]$
by a polynomial of the form
$\bc(u,v) = a_{00} + a_{10} u + a_{01} v + a_{11} uv$.
The statistics of the differences $D = C - C_0$ and $\bd = \bc - \bc_0$,
appropriately scaled, are the main subject of this paper.

\begin{figure}[t]
\begin{center}
\includegraphics[height=39mm,angle=0]{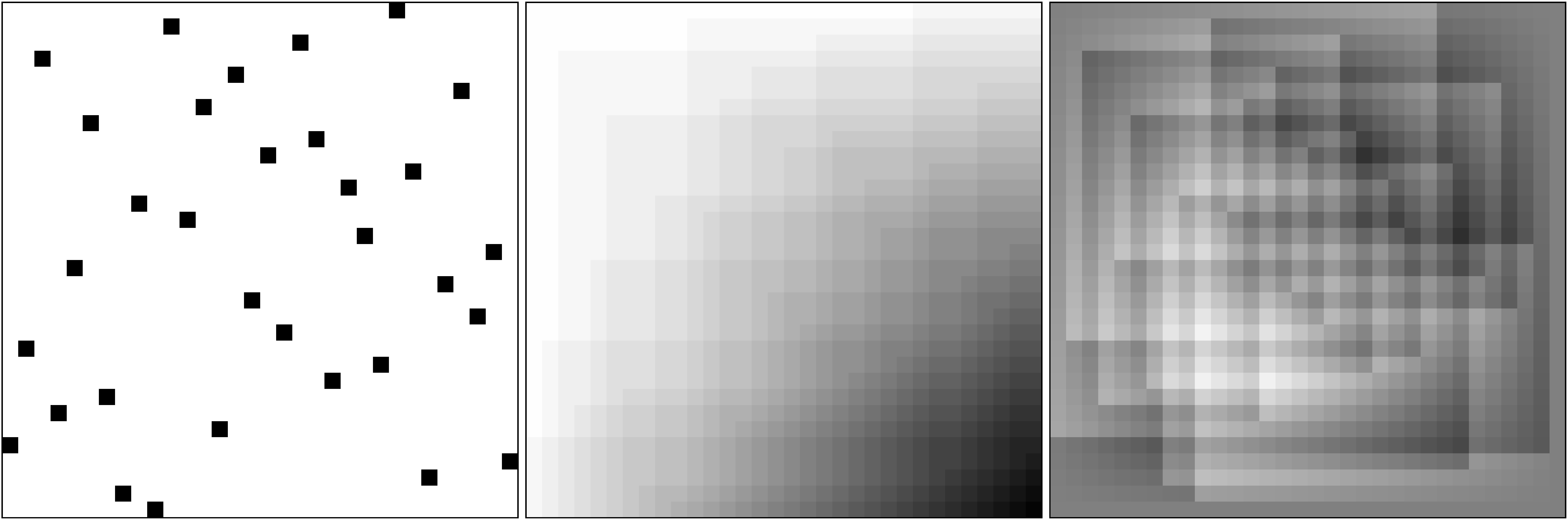}
\includegraphics[height=39mm,angle=0]{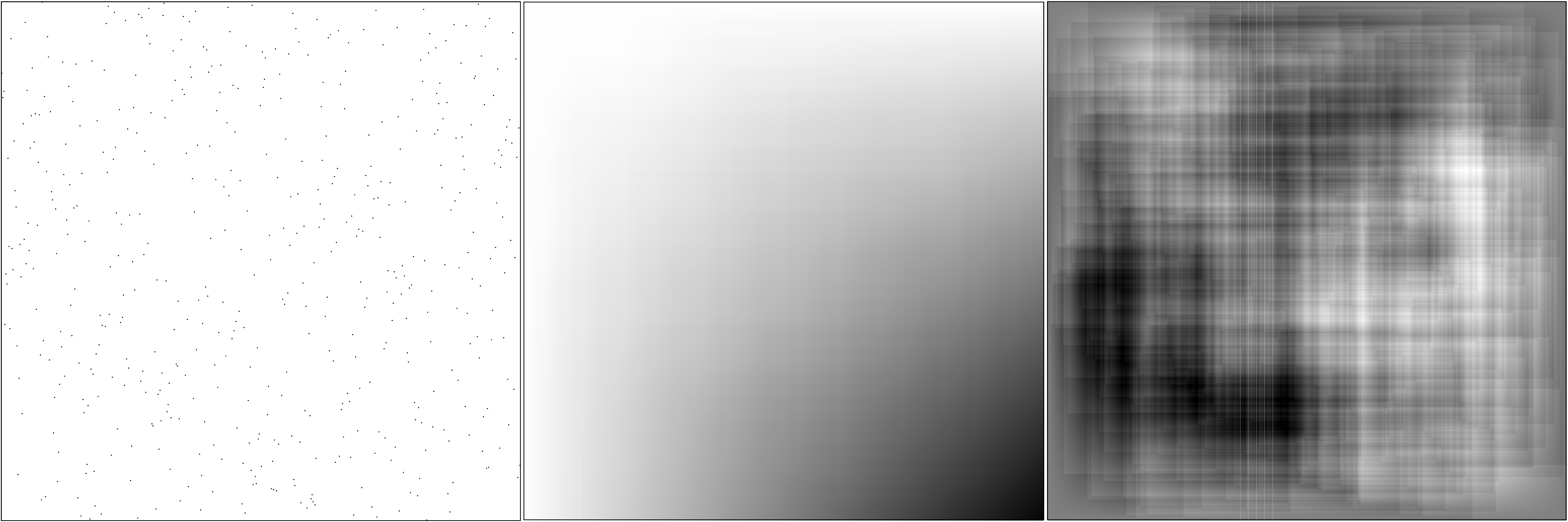}
\end{center}
\caption{Two permutation matrices, their associated discrete copula $C$
and the (scaled) difference $D = C - C_0$; $n = 32$ and $n = 512$.}
\label{fig:shade}
\end{figure}

In Figure \ref{fig:shade}, the nonzero entries of a permutation matrix
are represented by solid black squares. For the associated discrete copula
$C$, each entry is represented by a grey square, with $0$ being white 
and $n$ (the largest value) black.
Finally, for the difference $D$,
entries with $D_{i,j} \ge \sqrt{n}/2$ are painted black
and with $D_{i,j} \le -\sqrt{n}/2$, white.

One of our aims is to obtain a concentration result:
a typical discrete copula is near $C_0$.
Indeed, in Figure \ref{fig:shade} the central squares
are almost indistinguishable.
We shall consider discrete random variables $X_n$ assuming matrix values
in the finite set $\cS_n$
with uniform distribution and entries $(X_n)_{a,b} = X_{n,a,b}$.
We also consider the differences $Y_n = X_n - C_0$.
Just as we extended $C$ or $D$ to $\bc$ or $\bd$,
we define the extensions
$\bx_n \in C^0(\II^2)$ and $\by_n \in C^0_0(\II^2)$
(i.e., $\by_n|_{\partial\II^2} = 0$) of $X_n$ and $Y_n$.

We shall see that,
with high probability, the values assumed by $\by_n$
have order at most $1/\sqrt{n}$.
More: $\sqrt{n}\;\by_n$ approaches a bridged Brownian sheet,
compatibly with the patterns
in the rightmost squares in Figure \ref{fig:shade}.

\medskip

Recall that a \textit{Brownian sheet} is a process
yielding continuous functions $f_\ast: \II^2 \to \RR$
(see \cite{Leandre});
this  generalizes to a two-dimensional domain
the more familiar Brownian motion or Wiener process
(see \cite{DudleyRAP}).
In order to select $f_\ast$, first generate white noise $\eta$ in
the unit square $\II^2$ and then integrate it:
\[ f_\ast(u,v) = \iint_{[0,u] \times [0,v]} \eta\,. \]
In particular, $f_\ast|_{ \left( (\II \times \{0\})
\cup (\{0\} \times \II) \right) } = 0$.
We are interested in the \textit{bridged Brownian sheet},
a modified process which obtains functions $f \in C^0_0(\II^2)$.
In order to select $f$ from $f_\ast$, define
\begin{equation}
\label{eq:ffromfast}
f(u,v) = f_\ast(u,v) - u f_\ast(1,v)
- v f_\ast(u,1) + u v f_\ast(1,1).
\end{equation}
It turns out that the values of $f$ on a fixed set of points follow
a joint normal distribution (see Equation \ref{eq:jointbrown}).
We are particularly interested in the square grid $\mi = \mj = m$,
$u_i = \frac{i}{m}$, $v_j = \frac{j}{m}$.
Large permutation matrices satisfy
similar statistics on coarse $m\times m$ grids:
for $\alpha = {19}/{20}$,
if $m < n^{(1-\alpha)}$ then the function $\sqrt{n}\;\by_n$ 
behaves as $f$ in such grids
(Lemmas \ref{lem:begood} and \ref{lem:goodnormal}).
Still, the grid is fine enough for errors to be controlled
(Lemma \ref{lem:holder}).

\medskip

We are ready to state our main result.
As with Donsker's theorem,
this is a kind of central limit theorem,
in which a Brownian bridge or bridged sheet is obtained
as a limit of discrete processes.
We use Dudley's framework as in \cite{DudleyUCLT}.

\begin{thm}
\label{theo:discrete}
There exist probability spaces $\Omega_n$ with the following properties.
\begin{enumerate}[(a)]
\item{On $\Omega_n$ there exists a random variable
$X_n$ uniformly distributed in $\cS_n$.}
\item{On $\Omega_n$ a bridged Brownian sheet process
$f$ in $C^0_0(\II^2)$ is defined.}
\item{For all $\epsilon > 0$,
\[ \lim_{n \to \infty}
\Pr\left[ \left\| \sqrt{n}\;{\by_n} - f \right\|_{C^0} >
\epsilon \right] = 0, \] 
where $\by_n$ is defined from $X_n$ as above. }
\end{enumerate}
\end{thm}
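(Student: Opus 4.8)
The plan is to use the classical two-part strategy behind Donsker-type theorems, adapted to the coupling form of the statement: reduce the uniform distance on $\II^2$ to a distance on a coarse grid, control that grid distance by a normal approximation, and control the off-grid error by equicontinuity of both processes. Fix $\epsilon > 0$ and choose a grid size $m = m(n)$ tending to infinity while respecting $m < n^{1-\alpha}$ with $\alpha = 19/20$ (for instance $m = \lfloor n^{1/40} \rfloor$), so that the hypotheses of Lemmas \ref{lem:begood} and \ref{lem:goodnormal} apply. On the square grid $u_i = i/m$, $v_j = j/m$ both $\sqrt{n}\,\by_n$ and $f$ vanish on $\partial\II^2$ and are determined on the interior $(m-1)^2$ nodes by a random vector. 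The basic inequality I would exploit is
\[ \bigl\| \sqrt{n}\,\by_n - f \bigr\|_{C^0} \le \max_{i,j} \bigl| \sqrt{n}\,\by_n(u_i,v_j) - f(u_i,v_j) \bigr| + \operatorname{osc}_{1/m}\bigl(\sqrt{n}\,\by_n\bigr) + \operatorname{osc}_{1/m}(f), \]
where $\operatorname{osc}_{1/m}(g)$ denotes the largest oscillation of $g$ over a grid cell of side $1/m$. It then suffices to show each of the three terms tends to $0$ in probability, splitting the target $\epsilon$ into three parts.

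The two oscillation terms are handled marginally, without reference to the coupling. For $\sqrt{n}\,\by_n$, Lemma \ref{lem:holder} supplies a H\"older-type modulus of continuity valid with probability tending to $1$, so that $\operatorname{osc}_{1/m}(\sqrt{n}\,\by_n) = O(m^{-\beta})$ for some $\beta > 0$ (up to slowly growing factors), which vanishes as $m \to \infty$. For $f$, the same conclusion follows from the classical almost-sure H\"older continuity of the Brownian sheet with any exponent below $1/2$, inherited by the bridged sheet through the bilinear correction \eqref{eq:ffromfast}. Since $m(n) \to \infty$, both terms are $o(1)$ in probability regardless of how the coupling is built.

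The heart of the argument is the construction of the spaces $\Omega_n$ making the grid-distance term small. Let $V_n \in \RR^{(m-1)^2}$ collect the interior grid values of $\sqrt{n}\,\by_n$ and let $G$ be the Gaussian vector of interior grid values of $f$, whose covariance is the one recorded in \eqref{eq:jointbrown}. Lemma \ref{lem:begood} isolates a high-probability \emph{good} event on which $X_n$ is well behaved, and on it Lemma \ref{lem:goodnormal} yields a quantitative normal approximation: the law of $V_n$ is close to that of $G$ in a metric metrizing weak convergence (e.g. the Prokhorov metric). By the Strassen--Dudley coupling theorem this closeness produces a coupling of $(V_n, G)$ on a common space with $|V_n - G|$ small with high probability. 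I would then upgrade this to a coupling of the full objects: conditionally on $V_n$ realize $X_n$ from its exact conditional law given its grid values, and independently, conditionally on $G$, realize $f$ as a bridged Brownian sheet pinned at the grid nodes. This gives $X_n$ with the exact uniform law on $\cS_n$ and $f$ with the exact bridged-sheet law, while keeping the grid values within $\epsilon/3$ except on an event of vanishing probability.

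The main obstacle is the tension built into the choice of $m$. Shrinking the off-grid error forces $m \to \infty$, whereas the quantitative normal approximation of Lemma \ref{lem:goodnormal} degrades as $m$ grows, since it is a joint statement over the growing number $(m-1)^2$ of grid coordinates; the constraint $m < n^{1-\alpha}$ with $\alpha = 19/20$, i.e. $m = o(n^{1/20})$, is precisely what keeps both effects under control simultaneously. Verifying that the normal approximation is strong enough---uniformly over all grid nodes and after conditioning on the good event---is the delicate analytic step, and is exactly where I expect the work of Lemmas \ref{lem:begood} and \ref{lem:goodnormal} to be concentrated. Once it is in hand, the triangle inequality above together with a union bound over the three error terms closes the argument for every fixed $\epsilon > 0$.
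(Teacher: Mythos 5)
Your overall architecture is the paper's own: the same triangle inequality splitting the uniform distance into a grid maximum plus two oscillation terms, with the oscillations disposed of marginally by Lemma \ref{lem:holder} (the paper takes $\alpha = \gamma = 19/20$, $\epsilon = 1/100$) and Lemma \ref{lem:holderbrownsheet}, and the grid term handled by coupling the discrete grid statistics to a Gaussian vector via Lemmas \ref{lem:begood} and \ref{lem:goodnormal}; the final conditional-sampling step (uniform $X_n$ given the box counts, conditional extension of $f$ given its grid values) also matches the paper verbatim. Two minor mismatches: the paper couples the normalized \emph{increments} $\tilde h$ rather than the grid values, passing to grid values through the partial-summation functional of norm at most $2$ (Equation \ref{eq:th11}), and Lemmas \ref{lem:begood}--\ref{lem:goodnormal} are stated for the specific $\gamma$-grid with $\gamma = 19/20$, i.e.\ $m \approx n^{1/20}$, so your choice $m = \lfloor n^{1/40} \rfloor$ would require rerunning those proofs with $\gamma = 39/40$ (which does work, but is not what you can cite).

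The genuine gap is the sentence asserting that Lemma \ref{lem:goodnormal} ``yields'' closeness of the law of the grid vector to the Gaussian law in the Prokhorov metric. Lemma \ref{lem:goodnormal} is a \emph{local} limit theorem: it compares the probability mass at a single lattice point $\tilde l \in L_{\bg_\gamma,\eta}$ with a Gaussian density, up to a constant $C_n$ that is not a priori the right normalization. Converting this into closeness of laws in dimension $(m-1)^2 \to \infty$ requires two further ingredients, which are exactly the paper's Lemmas \ref{lem:diamvoronoi} and \ref{lem:goodvoronoi} and which your proposal elides by attributing the work to lemmas already proved. First, one must bound the diameter of the Voronoi cells of $L_{\bg_\gamma}$ inside the constraint subspace $V_{\bg_\gamma}$: because the entries in the last row and column of $\tilde l$ are determined by the others, the cell diameter is not the coordinate spacing $O(n^{\frac12-\gamma})$ but the amplified $O(n^{\frac52-3\gamma})$, and one must check this still tends to zero (it does, barely: $\frac52 - 3\gamma = -\frac{7}{20}$). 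Second, one must identify $C_n$ with the correct Gaussian constant; the paper does this not by computation but by the total-mass argument in Lemma \ref{lem:goodvoronoi}, using Lemma \ref{lem:begood} once for $P$ and once for $\mu$ to force $\tilde C_n \to 1$. Once these are in hand, Strassen--Dudley becomes unnecessary: since $P(\bg_\gamma,\tilde l) \sim \mu(R_{\tilde l})$ uniformly on the good set, the paper couples explicitly, drawing $\tilde z \sim \mu$ and taking $\tilde h$ to be (up to a correction of vanishing probability) the lattice point whose Voronoi cell contains $\tilde z$, so that $|\tilde h - \tilde z| = O(n^{\frac52-3\gamma})$ with probability $1-o(1)$. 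Your abstract coupling route is viable, but only after you supply precisely this Voronoi-geometry and normalization analysis, so it is strictly subsumed by the explicit construction rather than a shortcut around it.
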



\medskip

In Section 2 we study the pointwise behavior of $\by$
for the case of discrete copulas.
One of the main results is Lemma \ref{lem:prenormal};
its Corollary \ref{coro:normal} shows that, given $(u, v) \in \II^2$,
the distribution of $\sqrt{n}\;\by_n(u, v)$ approaches a normal distribution,
in accordance with the results for bridged Brownian sheets
mentioned above.
In Section 3 we state and prove Theorem \ref{theo:joint}:
if $m \in \NN$ and points
$(u_1,v_1), \ldots, (u_m,v_m) \in \II^2$ are fixed
and $n$ goes to infinity, then the joint distribution
of $\sqrt{n}\;\by_n(u_i, v_i)$ converges to the corresponding
distribution for bridged Brownian sheets.
Section 4 is centered on Lemma \ref{lem:holder},
a H\"older type estimate for $| D_{a_2,b_2} - D_{a_1,b_1} |$
provided $|a_2 - a_1|$ and $|b_2 - b_1|$ are small;
this goes together with the fact that Brownian sheets
are H\"older continuous for any exponent smaller than $\frac12$
(see Lemma \ref{lem:holderbrownsheet}).
In Section 5 we complete the proof of Theorem \ref{theo:discrete}:
the key difference from Section 3 is that now the size of the grid
goes to infinity as $m \approx n^{\frac{1}{20}}$.
Section 6 contains, as an application of Theorem \ref{theo:discrete},
a concentration result which makes no reference to bridged Brownian sheets.
In Section 7 we briefly discuss a formulation of our result
in terms of lozenge tilings of a regular hexagon.
Finally, in Section 8, we present a conjecture analogous to 
Theorem \ref{theo:discrete} for Birkhoff copulas,
which are real valued matrices with a definition similar to that
of discrete copulas.

The authors are thankful for the generous support of CNPq, CAPES and FAPERJ
and for fruitful conversations with Felipe Pina.

\section{Pointwise convergence}
\label{sect:pointwise}

For a positive integer $n$,
let $X_n$ be a random discrete copula with uniform distribution,
that is, for each copula $C \in \cS_n$
(or, equivalently, for each permutation matrix $M \in S_n$)
we have $\Pr[X_n=C]=1/n!$.
Given $a, b, \cl \in \NN$, write 
\[ P(n,a,b,\cl) = \Pr[X_{n,a,b} = \cl], \]
the probability that the upper left $a \times b$ submatrix of $M \in S_n$
contains precisely $\cl$ entries equal to $1$.
We introduce some heavy handed notation preparing the way to grids,
to be considered in later sections.

\begin{figure}[ht]
\[
\left(
\begin{array}{ccccc|ccc}
0 & 0 & 0 & 1 & 0 & 0 & 0 & 0 \\
1 & 0 & 0 & 0 & 0 & 0 & 0 & 0 \\
0 & 0 & 0 & 0 & 0 & 1 & 0 & 0 \\
0 & 0 & 1 & 0 & 0 & 0 & 0 & 0 \\
\hline \\[-8pt]
0 & 0 & 0 & 0 & 1 & 0 & 0 & 0 \\
0 & 1 & 0 & 0 & 0 & 0 & 0 & 0 \\
0 & 0 & 0 & 0 & 0 & 0 & 0 & 1 \\
0 & 0 & 0 & 0 & 0 & 0 & 1 & 0 
\end{array}\right)\quad
\left(
\begin{array}{ccccc|ccc}
0 & 0 & 0 & 1 & 1 & 1 & 1 & 1 \\
1 & 1 & 1 & 2 & 2 & 2 & 2 & 2 \\
1 & 1 & 1 & 2 & 2 & 3 & 3 & 3 \\
1 & 1 & 2 & 3 & 3 & 4 & 4 & 4 \\
\hline \\[-8pt]
1 & 1 & 2 & 3 & 4 & 5 & 5 & 5 \\
1 & 2 & 3 & 4 & 5 & 6 & 6 & 6 \\
1 & 2 & 3 & 4 & 5 & 6 & 6 & 7 \\
1 & 2 & 3 & 4 & 5 & 6 & 7 & 8 
\end{array}\right)
\]
\caption{A permutation matrix and its discrete copula:
$C_{4,5} = 3$.}
\label{fig:8x8copula}
\end{figure}

A quadruple $\bb = (n,a,b,\cl) \in \NN^4$ is a \textit{blocking}
if it satisfies the following condition:
\begin{equation}
n > 0, \quad
0 \le a, b \le n, \quad
\max(0,n-a-b) \le \cl \le \min(a,b).
\label{eq:nabl}
\end{equation}
A blocking $\bb = (n,a,b,\cl)$ describes a partitioning
of the $n\times n$ square into four rectangles
of sizes $\dot a_i \times \dot b_j$, $1 \le i, j \le 2$, where
\[ \dot a_1 = a_1 = a, \quad \dot a_2 = n - a, \quad
\dot b_1 = b_1 = b, \quad \dot b_2 = n - b, \]
together with the number of nonzero entries $\dot \cl_{i,j}$, where
\[ \dot \cl_{1,1} = \cl_{1,1} = \cl, \quad \dot \cl_{1,2} = a - \cl, \quad
\dot \cl_{2,1} = b - \cl, \quad \dot \cl_{2,2} = n - a - b + \cl, \]
to fit the $(i,j)$-rectangle for any matrix $M \in S_n$
with $C_{i,j} = \cl$.  
In Figure \ref{fig:8x8copula},
\[ \dot \cl_{1,1} = 3, \quad \dot \cl_{1,2} = 1, \quad
\dot \cl_{2,1} = 2, \quad \dot \cl_{2,2} = 2 \]
are indeed the number of nonzero entries in each box.
Notice that Condition (\ref{eq:nabl}) is equivalent to
$\dot \cl_{i,j} \ge 0$ (for all $i, j$).

An expression for this probability 
is a simple combinatorial problem.

\begin{lem}
\label{lem:exact}
Let $\bb = (n, a, b, \cl)$ be a blocking;
then
\begin{gather*}
E[X_{n,a,b}]=(C_0)_{a,b} = \frac{ab}{n}; \\
P(\bb) = P(n,a,b,\cl) =
\frac{a!b!(n-b)!(n-a)!}{n!\cl!(a-\cl)!(b-\cl)!(n-b-a+\cl)!} =
\frac{\dot a_1!\dot a_2!\dot b_1!\dot b_2!}%
{n!\dot \cl_{1,1}!\dot \cl_{1,2}!\dot \cl_{2,1}!\dot \cl_{2,2}!}. 
\end{gather*}
\end{lem}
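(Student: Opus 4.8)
For the first identity I would just invoke linearity of expectation. Writing $M$ for the permutation matrix attached to $X_n$, the entry $X_{n,a,b}$ is the number of ones of $M$ in the upper left $a\times b$ block, i.e. $X_{n,a,b}=\sum_{i\le a,\,j\le b}M_{i,j}$. Since $M$ is a uniform permutation matrix, $\Pr[M_{i,j}=1]=1/n$ for every fixed $(i,j)$, so each of the $ab$ summands has expectation $1/n$ and $E[X_{n,a,b}]=ab/n=(C_0)_{a,b}$. No integration against the distribution $P$ is needed, which is why I would prove the mean first and separately.

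For the probability I would reduce to a pure enumeration. By the bijection $S_n\to\cS_n$ together with the uniform law, $P(\bb)=N/n!$, where $N$ counts the permutation matrices $M\in S_n$ with $C_{a,b}=\cl$. The first step is the observation that fixing $C_{a,b}=\cl$ forces the number of ones in all four rectangles of the blocking: the $a$ top rows carry $a$ ones, of which $\cl$ sit in the $b$ left columns, so $a-\cl=\dot\cl_{1,2}$ lie to the right; the $b$ left columns likewise force $\dot\cl_{2,1}=b-\cl$; and the remaining $n-a-b+\cl=\dot\cl_{2,2}$ ones fall in the lower right rectangle. This is precisely the content of the remark that (\ref{eq:nabl}) amounts to $\dot\cl_{i,j}\ge 0$, which is what guarantees that the factorials below have nonnegative arguments.

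The second step is to count $N$ directly, which I would do by building $M$ in two stages: first decide, for each row, whether its unique one lands in the left or the right group of columns, and only then choose the exact column. Selecting which $\cl$ of the top $a$ rows send their one into the left block can be done in $\binom{a}{\cl}$ ways, and which $b-\cl$ of the bottom $n-a$ rows do so in $\binom{n-a}{b-\cl}$ ways; this leaves exactly $b$ rows assigned to the $b$ left columns and $n-b$ rows to the $n-b$ right columns. Any bijection within each column group yields a valid permutation matrix with the prescribed block counts, and conversely every such matrix arises exactly once this way, so there are $b!\,(n-b)!$ completions. Hence
\[ N=\binom{a}{\cl}\binom{n-a}{b-\cl}\,b!\,(n-b)!, \]
and dividing by $n!$ and expanding the binomial coefficients produces both displayed forms of $P(\bb)$; note that the outcome is the hypergeometric law $P(\bb)=\binom{a}{\cl}\binom{n-a}{b-\cl}/\binom{n}{b}$, which is the shape one expects given the normal approximation sought later.

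I do not anticipate a genuine obstacle, as the statement is a finite enumeration; the one place that rewards care is the count of $N$, where I must verify that the two-stage construction is a bijection onto the matrices with $C_{a,b}=\cl$, so that nothing is over- or undercounted, and that (\ref{eq:nabl}) indeed keeps every factorial argument nonnegative. As an independent consistency check I would sum $P(\bb)$ over the admissible values of $\cl$: by the Vandermonde identity $\sum_\cl\binom{a}{\cl}\binom{n-a}{b-\cl}=\binom{n}{b}$, whence the total is $\tfrac{b!(n-b)!}{n!}\binom{n}{b}=1$, confirming the normalization and hence the formula.
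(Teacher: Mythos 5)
Your proposal is correct and takes essentially the same route as the paper: linearity of expectation for the mean, and a hypergeometric count for $P(\bb)$. The only cosmetic difference is that the paper conditions on the uniform $a$-subset of columns occupied by the top $a$ rows, obtaining $\binom{b}{\cl}\binom{n-b}{a-\cl}/\binom{n}{a}$, whereas you enumerate permutation matrices by a two-stage row assignment and land on the transposed form $\binom{a}{\cl}\binom{n-a}{b-\cl}/\binom{n}{b}$; both expand to the same factorial expression.
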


\begin{proof}
For the expected value,
each one of the first $a$ rows contributes with a $1$ with 
probability $b/n$, obtaining the desired result.

There are $a$ columns in the matrix $M \in S_n$
with a $1$ in the first $a$ rows.
The total number of ways 
to choose those columns is $\binom{n}{a}$. In order to have $X_{n,a,b} = \cl$,
$\cl$ columns must be chosen from the first $b$ columns, and $a-\cl$ columns
from the remaining $n-b$ columns. Thus
\[ P(\bb) 
=\frac{\binom{b}{\cl} \binom{n-b}{a-\cl} }{ \binom{n}{a} }, \]
and the result follows by expanding the binomial coefficients.
\end{proof}

Let $\xl t = t \log t$;
we use Stirling's approximation formula
with tail $\st(k)$:
\begin{equation}
\label{eq:stirling}
\log(k!) = \xl k - k + \frac12 \log k + \frac12 \log(2\pi)
+ \st k, \quad
0 < \st k < \frac{1}{12k}.
\end{equation}


We define the rescalings on $\II$
\[ \hat a_i = \frac{\dot a_i}{n}, \quad
\hat b_j = \frac{\dot b_j}{n}, \quad
\hat \cl_{i,j} = \frac{\dot \cl_{i,j}}{n}, \]
the auxiliary real numbers
\[ \cC = \hat a_1 \hat a_2  \hat b_1 \hat b_2 \in [0,1/16], \quad
\cD = \hat \cl_{1,1} \hat \cl_{1,2} \hat \cl_{2,1} \hat \cl_{2,2}, \]
and the factorial-free approximation
\begin{align*}
\tilde P(\bb) &= 
\frac{\sqrt{\cC}}{\sqrt{2\pi \cD n}}\;
\frac{
{\hat a_1}^{\hat a_1 n} {\hat a_2}^{\hat a_2 n}
{\hat b_1}^{\hat b_1 n} {\hat b_2}^{\hat b_2 n}
}
{ {\hat \cl_{1,1}}^{\hat \cl_{1,1}n}
{\hat \cl_{1,2}}^{\hat \cl_{1,2}n}
{\hat \cl_{2,1}}^{\hat \cl_{2,1}n}
{\hat \cl_{2,2}}^{\hat \cl_{2,2}n}} \\
&= \frac{\sqrt{\cC}}{\sqrt{2\pi \cD n}}\;
\exp\bigg( n\, \bigg(
\sum_{1 \le i \le 2} \xl \hat a_i
+ \sum_{1 \le j \le 2} \xl \hat b_j
- \sum_{1 \le i,j \le 2} \xl \hat \cl_{i,j} 
\bigg) \bigg).
\end{align*}

The \textit{sparsity} of a blocking $\bb = (n,a,b,\cl)$ is
$\lambda(\bb) = \min_{i,j} \dot \cl_{i,j}$.

\begin{lem}
\label{lem:stirling}
\[ \lim_{\lambda(\bb) \to \infty}
\frac{P(\bb)}{\tilde P(\bb)} = 1. \]
\end{lem}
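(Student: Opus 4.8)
The plan is to pass to logarithms and apply Stirling's formula (\ref{eq:stirling}) to each of the nine factorials in the exact formula for $P(\bb)$ supplied by Lemma \ref{lem:exact}. Writing
\[ \log P(\bb) = \sum_{i}\log(\dot a_i!) + \sum_{j}\log(\dot b_j!) - \log(n!) - \sum_{i,j}\log(\dot \cl_{i,j}!), \]
each term expands into five pieces: a leading $\xl$ piece, a linear piece $-k$, a half-log piece $\tfrac12\log k$, a constant piece $\tfrac12\log(2\pi)$, and a tail $\st(\cdot)$. The expectation is that, after substituting $\dot a_i = \hat a_i\, n$ and the analogous rescalings, everything except the exponent and the prefactor of $\tilde P(\bb)$ cancels, so that $\log P(\bb) - \log\tilde P(\bb)$ reduces to a signed sum of Stirling tails.

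The substance of the argument is the bookkeeping, which is driven entirely by the three identities $\sum_i \dot a_i = \sum_j \dot b_j = \sum_{i,j}\dot\cl_{i,j} = n$ (equivalently the blocking relations $\dot a_i = \dot\cl_{i,1}+\dot\cl_{i,2}$ and $\dot b_j = \dot\cl_{1,j}+\dot\cl_{2,j}$). For the $\xl$ pieces, the identity $\xl\dot a_i = n\,\xl\hat a_i + \dot a_i\log n$ isolates a $\log n$ contribution whose signed total vanishes by the sum constraints, leaving exactly $n\big(\sum_i\xl\hat a_i + \sum_j\xl\hat b_j - \sum_{i,j}\xl\hat\cl_{i,j}\big)$, the exponent of $\tilde P(\bb)$. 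The linear pieces cancel outright, again by the constraints. In the half-log and constant pieces the surviving signed count of terms is $2+2-1-4=-1$, which assembles precisely the prefactor $\sqrt{\cC}/\sqrt{2\pi\cD n}$ (the half-logs of the rescaled quantities $\hat a_i,\hat b_j,\hat\cl_{i,j}$ producing $\tfrac12\log\cC - \tfrac12\log\cD$, while the $\log n$ and $\log(2\pi)$ parts produce $-\tfrac12\log n$ and $-\tfrac12\log(2\pi)$). After these cancellations one is left with
\[ \log P(\bb) - \log\tilde P(\bb) = \st\dot a_1 + \st\dot a_2 + \st\dot b_1 + \st\dot b_2 - \st n - \sum_{i,j}\st\dot\cl_{i,j}. \]

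The limit is then immediate: since $\dot\cl_{i,j}\ge\lambda(\bb)$, while $\dot a_i,\dot b_j\ge 2\lambda(\bb)$ and $n\ge 4\lambda(\bb)$, every argument above tends to infinity as $\lambda(\bb)\to\infty$; the bound $0<\st k<1/(12k)$ then forces each of the nine tails to $0$, so the difference of logarithms vanishes and $P(\bb)/\tilde P(\bb)\to 1$. The computation has no deep obstacle: the only real work is the careful accounting of the five families of terms, and the one point that must not be overlooked is that bounding the \emph{minimum} cell count $\lambda(\bb)$ simultaneously bounds the marginals $\dot a_i,\dot b_j$ and the total $n$ from below, which is what lets all nine Stirling tails vanish at once.
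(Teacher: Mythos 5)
Your proof is correct and takes essentially the same route as the paper, whose entire proof of Lemma \ref{lem:stirling} is the instruction to apply Stirling's formula to the exact expression in Lemma \ref{lem:exact} together with the remark that $\lambda(\bb)$ is the smallest integer whose factorial appears there. You have merely made explicit the cancellation bookkeeping and the lower bounds $\dot a_i,\dot b_j \ge 2\lambda(\bb)$ and $n \ge 4\lambda(\bb)$ that the paper leaves implicit, and these details check out.
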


That is, given $\epsilon > 0$ there exists $\lambda_\epsilon$
such that, if $\lambda(\bb) > \lambda_\epsilon$ then
\[ \left| \frac{P(\bb)}{\tilde P(\bb)} - 1 \right| < \epsilon. \]

\begin{proof}
Use Stirling's formula on Lemma \ref{lem:exact}.
Notice that $\lambda(\bb)$ is the smallest integer whose factorial
is taken in the formula of Lemma \ref{lem:exact}.
\end{proof}

We will use another formula for $\tilde P$,
more amenable to Taylor approximations.
From Lemma \ref{lem:exact}, the expected value of
$(X_{n,a,b})/n$, the random variable corresponding to $\hat \cl_{1,1}$,
equals $\hat a \hat b$.
Inspired by this, we introduce new variables
\( \hat \lh = \sqrt{n}\;(\hat \cl - \hat a \hat b) =
(\cl - \frac{ab}{n})/\sqrt{n} \)
and, similarly, $\hat \lh_{i,j}$ such that
\( \hat \cl_{i,j} = \hat a_i \hat b_j + {\hat \lh_{i,j}}/{\sqrt{n}} \).
Notice that
\begin{equation}
\label{eq:sumasumb}
\sum_i \hat a_{i} = \sum_j \hat b_{j} = 1, \quad
\sum_j \hat \cl_{i,j} = \hat a_i, \quad \sum_i \hat \cl_{i,j} = \hat b_j
\end{equation}
and therefore
\begin{equation}
\label{eq:sumh}
\hat \lh_{i,j} = (-1)^{i+j} \hat \lh, \quad
\sum_j \hat \lh_{i,j} = \sum_i \hat \lh_{i,j} = 0.
\end{equation}
We also introduce the corresponding random variables:
\[ \hat h =
\frac{1}{\sqrt{n}} \left(X_{a,b} - \frac{ab}{n}\right); \]
the random variables $\hat h_{i,j}$ are defined similarly;
thus, 
\[ P(\bb) = \Pr[X_{a,b} = \cl] = \Pr[\hat h = \hat\lh]. \]

\begin{lem}
\label{lem:logQ}
\[ \log(\tilde P(\bb)) = - \frac{1}{2} \log\left(
\frac{2\pi \cD n}{\cC} \right) 
- n \sum_{i,j} \hat a_i \hat b_j
\xl\left( 1 + \frac{\hat \lh_{i,j}}{\sqrt{n}\,\hat a_i \hat b_j} \right). \]
\end{lem}

\begin{proof}
Write
\[ \log(\tilde P(\bb)) = - \frac{1}{2} \log\left(
\frac{2\pi \cD n}{\cC} \right) 
- n A \]
where  
\[ A = 
\sum_{i,j} \xl \hat \cl_{i,j} - \sum_i \xl \hat a_i - \sum_j \xl \hat b_j.
\]
We have
\begin{align*}
\xl \hat \cl_{i,j} &=
\left(\hat a_i \hat b_j + \frac{\hat \lh_{i,j}}{\sqrt{n}} \right)
\left( \log \hat a_i  + \log \hat b_j  +
\log\left(1 + \frac{\hat \lh_{i,j}}{\sqrt{n}\,\hat a_i \hat b_j} \right) \right) \\
&= \hat b_j \xl \hat a_i + \hat a_i \xl \hat b_j
+ \frac{\hat \lh_{i,j}}{\sqrt{n}} \log \hat a_i
+ \frac{\hat \lh_{i,j}}{\sqrt{n}} \log \hat b_j 
\\
&\phantom{=} +
\hat a_i \hat b_j
\xl \left( 1 + \frac{\hat \lh_{i,j}}{\sqrt{n}\,\hat a_i \hat b_j} \right).
\end{align*}
From equations \ref{eq:sumasumb} and \ref{eq:sumh} we have
\[ A = \sum_{i,j} \hat a_i \hat b_j
\xl \left( 1 + \frac{\hat \lh_{i,j}}{\sqrt{n}\,\hat a_i \hat b_j} \right). \]
\end{proof}

We expect that appropriate scalings asymptotically yield
normal distributions.
Consider the residuals $Y_n = X_n - C_0$
and their extension $\by \in C^0_0(\II^2)$.
Notice that
the values at two points in the the same small square satisfy
\[ \left| \by(u_0,v_0) - \by(u_1,v_1) \right| < \frac{4}{n}. \]
We shall prove in Corollary \ref{coro:normal}
that, for fixed $(u,v) \in (0,1)^2$ and $n \to \infty$,
$\sqrt{n}\;\by(u,v)$ has normal distribution centered at $0$
with variance $\cC = u(1-u)v(1-v)$.
Consistently with Theorem \ref{theo:discrete},
this is precisely the distribution of $f(u,v)$
if $f$ is a bridged Brownian sheet.

As a warm-up, we compute an approximation
for $P(\bb) = \Pr[X_{n,a,b}=\cl]$ near the mean.

\begin{lem}
\label{lem:praverage}
Let $X_n$ be a random discrete copula with uniform distribution in $\cS_n$.
Given constants $0<u,v<1$,
let $\bb_n = (n,a,b,\cl)$ for
$a=\lfloor u n\rfloor$, $b=\lfloor v n\rfloor$ and
$\cl = \lfloor u v n \rfloor$,
with the other variables defined as above.
Then 
\[ \lim_{n \to \infty}  \sqrt{2\pi \cC n}\;P(\bb_n) = 1. \]
\end{lem}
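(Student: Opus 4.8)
The plan is to reduce the exact probability to the factorial-free approximation $\tilde P(\bb_n)$ via Lemma \ref{lem:stirling}, and then to read off the asymptotics of $\tilde P(\bb_n)$ from the closed form in Lemma \ref{lem:logQ}. For the first step I would check that the sparsity grows: with $a = \lfloor un\rfloor$, $b = \lfloor vn\rfloor$, $\cl = \lfloor uvn\rfloor$ and $0 < u,v < 1$, each of $\dot\cl_{1,1}\approx uvn$, $\dot\cl_{1,2}\approx u(1-v)n$, $\dot\cl_{2,1}\approx (1-u)vn$ and $\dot\cl_{2,2}\approx (1-u)(1-v)n$ is of order $n$, so $\lambda(\bb_n)\to\infty$ and Lemma \ref{lem:stirling} gives $P(\bb_n)/\tilde P(\bb_n)\to 1$. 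It then suffices to prove $\sqrt{2\pi\cC n}\,\tilde P(\bb_n)\to 1$; note that here $\cC = \hat a_1\hat a_2\hat b_1\hat b_2 \to u(1-u)v(1-v) > 0$, so $\cC$ is bounded below for large $n$.

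The crucial point is that choosing $\cl = \lfloor uvn\rfloor$ places us within $O(1)$ of the mean $ab/n$: a short computation with the fractional parts of $un$, $vn$, $uvn$ gives $\cl - ab/n = O(1)$, hence $\hat\lh = (\cl - ab/n)/\sqrt{n} = O(1/\sqrt{n})$, and $\hat\lh_{i,j} = (-1)^{i+j}\hat\lh$. Writing $x_{i,j} = \hat\lh_{i,j}/(\sqrt{n}\,\hat a_i\hat b_j)$ and noting that each $\hat a_i\hat b_j$ tends to a positive constant, we get $x_{i,j} = O(1/n)$ uniformly, so $1 + x_{i,j} > 0$ for large $n$. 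I would then expand $\xl(1+x) = x + \tfrac{x^2}{2} + O(x^3)$ inside the sum $S := \sum_{i,j}\hat a_i\hat b_j\,\xl(1+x_{i,j})$ of Lemma \ref{lem:logQ}. The linear term contributes $\sum_{i,j}\hat a_i\hat b_j x_{i,j} = n^{-1/2}\sum_{i,j}\hat\lh_{i,j} = 0$ by the marginal identities (\ref{eq:sumh}); the quadratic term, using $\hat\lh_{i,j}^2 = \hat\lh^2$ and $\hat a_1^{-1} + \hat a_2^{-1} = (\hat a_1\hat a_2)^{-1}$ (because $\hat a_1 + \hat a_2 = 1$, similarly for $\hat b$), collapses to $\hat\lh^2/(2n\cC)$; and the cubic remainder is $O(1/n^3)$. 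Multiplying by $n$, the sum satisfies $nS = \hat\lh^2/(2\cC) + O(1/n^2) = O(1/n)\to 0$.

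It remains to treat the prefactor $-\tfrac12\log(2\pi\cD n/\cC)$. Since $\hat\cl_{i,j} = \hat a_i\hat b_j + O(1/n)$, one has $\cD = \prod_{i,j}\hat\cl_{i,j} = (\hat a_1\hat a_2\hat b_1\hat b_2)^2(1 + O(1/n)) = \cC^2(1 + O(1/n))$, so $\cD/\cC = \cC(1 + O(1/n))$ and $-\tfrac12\log(2\pi\cD n/\cC) = -\tfrac12\log(2\pi\cC n) + O(1/n)$. Combining with the previous paragraph gives $\log\tilde P(\bb_n) = -\tfrac12\log(2\pi\cC n) + O(1/n)$, that is, $\sqrt{2\pi\cC n}\,\tilde P(\bb_n) = 1 + O(1/n)\to 1$; together with the first step this yields the claim. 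I expect the main obstacle to be purely in the bookkeeping — keeping uniform control of the error terms (that $\hat\lh$, and hence $x_{i,j}$, is genuinely $O(1/\sqrt{n})$ and $O(1/n)$ respectively, so that the discarded cubic and $\cD$ corrections really do vanish after multiplication by $n$) rather than any conceptual difficulty; the one genuinely structural ingredient is the cancellation of the first-order term forced by the marginal identities (\ref{eq:sumh}).
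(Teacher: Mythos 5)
Your proposal is correct and follows essentially the same route as the paper's own proof: reduce to $\tilde P(\bb_n)$ via Lemma \ref{lem:stirling} after noting $\lambda(\bb_n)\to\infty$, then use Lemma \ref{lem:logQ} with the Taylor expansion (\ref{eq:taylorxl}), where the linear term cancels by (\ref{eq:sumh}), the quadratic term is $O(1/n)$ because $\hat\lh = O(1/\sqrt{n})$, and $\cD/\cC^2\to 1$ handles the prefactor. Your explicit collapse of the quadratic term to $\hat\lh^2/(2n\cC)$ is a minor sharpening of the paper's observation that the second summation tends to zero, but it is the same argument.
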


\begin{proof}
Clearly, the sparsity $\lambda(\bb_n)$ goes to $\infty$ when $n \to \infty$;
in particular, $\bb_n$ satisfies Condition (\ref{eq:nabl}).
In the notation of Lemma \ref{lem:logQ}, we claim that
\[ \lim_{n \to \infty} \frac{\cD}{\cC^2} = 1, \quad
\lim_{n \to \infty} nA = 0, \]
which, given Lemma \ref{lem:stirling}, obtains the desired result
(we occasionally omit the dependency in $n$).
We have $\hat a \to u$ and $\hat b \to v$ so that
$\cC \to u(1-u)v(1-v)$ and $\cD \to (u(1-u)v(1-v))^2$,
from which the first limit follows.
Standard manipulation of integer parts yields
\[ - \frac{2}{\sqrt{n}} < \hat \lh_{i,j} < \frac{2}{\sqrt{n}}. \]
From the Taylor approximation
\begin{equation}
\xl(1+t) = (1+t)\log(1+t) =
t + \frac{t^2}{2} + O(t^3),
\label{eq:taylorxl}
\end{equation}
we may write
\[
nA = n \sum_{i,j} \hat a_i \hat b_j
\xl\left(1+\frac{\hat \lh_{i,j}}{\sqrt{n}\,\hat a_i \hat b_j} \right) 
= \sqrt{n} \sum_{i,j} \hat \lh_{i,j} 
+ \frac{1}{2} \sum_{i,j} \frac{\hat \lh_{i,j}^2}{\hat a_i \hat b_j}
+ O\big(\frac{1}{n^2}\big).
\]
Equations \ref{eq:sumasumb} and \ref{eq:sumh}
imply that the first summation in the expansion equals zero.
The second summation tends to zero.
\end{proof}

We denote the Gaussian centered at $0$ with variance $C$ by
\[ \ga_{C} t  = 
\frac{1}{\sqrt{C}} \ga\left(\frac{t}{\sqrt{C}}\right) =
\frac{1}{\sqrt{C}} \frac{1}{\sqrt{2\pi}}
\exp\left( - \frac{t^2}{2C} \right), \]
with $\ga t = \ga_1 t$; in particular, $\int_{\RR} \ga_{C} = 1$.

The following lemma gives an estimate for $P(\bb)$ for large $n$,
when $a$ and $b$ are not too close to the boundary values $0$ and $n$
(controlled by a parameter $\alpha$) and when
$\cl$ is relatively near its average value $\frac{ab}{n}$
(controlled by $\eta$).
The lemma is rather technical and requires additional notation.

A \textit{grid} is a triple $\bg = (n,a,b)$;
we identify a blocking with a pair $\bb = (\bg,\cl)$.
Given a grid $\bg$, we relate a blocking $\bb = (\bg,\cl)$
with a \textit{normalized blocking} $\hat\bb = (\bg,\hat \lh)$.
Notice that $\hat \lh$ and $\hat h$ must belong to the lattice
\[ L_{\bg} = \frac{1}{\sqrt{n}} \left( \ZZ - \frac{ab}{n} \right)
\subset \RR, \]
with spacing $1/\sqrt{n}$.
We now introduce the first instance of parameters
which shall be restricted to smaller regions along the text.
Let $\alpha \in (\frac{7}{8},1)$; a grid $\bg$ is
\textit{$\alpha$-regular} if
\[ n^\alpha < a, b < n - n^\alpha. \]
Consider the open triangle
\begin{equation}
\label{eq:Delta1}
\Delta_1 =
\{ (\alpha,\eta) \in \RR^2 \,\mid\, 0 < 6\eta < 8\alpha - 7 < 1 \};
\end{equation}
for $(\alpha,\eta) \in \Delta_1$,
a normalized blocking $\hat\bb = (\bg,\hat \lh)$
is \textit{$(\alpha,\eta)$-standard},
or $\hat\bb \in \Xi_{\alpha,\eta}$, if $\bg$ is $\alpha$-regular and
\[ \hat \lh \in L_{\bg}, \quad | \hat \lh | < n^\eta. \]

\begin{lem}
\label{lem:prenormal}
For fixed $(\alpha, \eta) \in \Delta_1$,
\[ \lim_{n \to \infty;\; \hat\bb \in \Xi_{\alpha,\eta} }
\frac{ \sqrt{n} \; P(\hat\bb) }
{ \ga_{\cC} \hat \lh }
= 1. \]
\end{lem}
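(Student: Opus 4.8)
The plan is to build everything on Lemmas~\ref{lem:stirling} and~\ref{lem:logQ}, reducing the claim to a single uniform Taylor estimate. First I would check that the hypothesis of Lemma~\ref{lem:stirling} holds uniformly over $\Xi_{\alpha,\eta}$, that is, $\lambda(\hat\bb)\to\infty$. Since $\dot\cl_{i,j} = n\,\hat a_i\hat b_j + \sqrt{n}\,\hat\lh_{i,j}$, and $\alpha$-regularity gives $\hat a_i,\hat b_j > n^{\alpha-1}$ while $|\hat\lh| < n^\eta$, every block count satisfies $\dot\cl_{i,j} > n^{2\alpha-1} - n^{1/2+\eta}$. The defining inequality $6\eta < 8\alpha-7$ of $\Delta_1$ forces $2\alpha-1 > \tfrac12 + \eta$, so the first power dominates and $\lambda(\hat\bb)\to\infty$ uniformly. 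Hence $P(\hat\bb)/\tilde P(\hat\bb)\to 1$ uniformly, and it suffices to prove the statement with $\tilde P$ in place of $P$.

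Next I would make the target ratio explicit. From Lemma~\ref{lem:logQ},
\[ \sqrt{n}\,\tilde P(\hat\bb) = \sqrt{\frac{\cC}{2\pi\cD}}\;\exp(-nA), \qquad A = \sum_{i,j}\hat a_i\hat b_j\,\xl\!\Big(1+\frac{\hat\lh_{i,j}}{\sqrt{n}\,\hat a_i\hat b_j}\Big), \]
so that
\[ \frac{\sqrt{n}\,\tilde P(\hat\bb)}{\ga_{\cC}\hat\lh} = \frac{\cC}{\sqrt{\cD}}\;\exp\!\Big(-\big(nA - \tfrac{\hat\lh^2}{2\cC}\big)\Big). \]
The prefactor and the exponent I would then treat separately. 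For the prefactor, $\cD/\cC^2 = \prod_{i,j}\big(1+\hat\lh_{i,j}/(\sqrt{n}\,\hat a_i\hat b_j)\big)$, and each factor differs from $1$ by $O(n^{\eta+3/2-2\alpha})$; since the $\Delta_1$ bound gives in particular $\eta < 2\alpha-\tfrac32$, this vanishes and $\cC/\sqrt{\cD}\to 1$ uniformly.

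The heart of the matter is the exponent. Using $\xl(1+t)=t+\tfrac{t^2}{2}+O(t^3)$ from~(\ref{eq:taylorxl}) with $t=\hat\lh_{i,j}/(\sqrt{n}\,\hat a_i\hat b_j)$, the first-order contribution to $nA$ is $\sqrt{n}\sum_{i,j}\hat\lh_{i,j}$, which vanishes by~(\ref{eq:sumh}). The second-order contribution is $\tfrac12\sum_{i,j}\hat\lh_{i,j}^2/(\hat a_i\hat b_j)$; because $\hat\lh_{i,j}^2 = \hat\lh^2$ and, using $\hat a_1+\hat a_2 = \hat b_1+\hat b_2 = 1$, one has $\sum_i \hat a_i^{-1} = (\hat a_1\hat a_2)^{-1}$ and likewise for the $\hat b_j$, this term equals exactly $\hat\lh^2/(2\cC)$, precisely the quantity being subtracted. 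So the whole exponent reduces to the cubic-and-higher remainder of $nA$.

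This remainder is the main obstacle, and it is exactly where the shape of $\Delta_1$ is consumed. The cubic term is bounded by $C\,|\hat\lh|^3\,n^{-1/2}\sum_{i,j}(\hat a_i\hat b_j)^{-2}$, which by $\alpha$-regularity and $|\hat\lh|<n^\eta$ is $O(n^{3\eta + 7/2 - 4\alpha})$; this is $o(1)$ precisely under $6\eta < 8\alpha-7$. Hence $nA - \hat\lh^2/(2\cC)\to 0$ uniformly over $\Xi_{\alpha,\eta}$, the exponential tends to $1$, and combining this with the prefactor and with the reduction to $\tilde P$ yields the claim. The delicate point throughout is that every estimate must be uniform in the blocking as both $n$ and $\hat\lh$ vary, so I would keep each error term as an explicit power of $n$ and verify it against the two inequalities cutting out $\Delta_1$.
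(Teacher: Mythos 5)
Your proposal is correct and follows essentially the same route as the paper's own proof: it reduces to $\tilde P$ via Lemma~\ref{lem:stirling} after checking $\lambda(\hat\bb)\to\infty$ uniformly from $\eta < 2\alpha - \frac32$, expands $nA$ from Lemma~\ref{lem:logQ} with the Taylor estimate of Equation~\ref{eq:taylorxl}, kills the linear term via Equation~\ref{eq:sumh}, identifies the quadratic term as $\hat\lh^2/(2\cC)$, and bounds the cubic remainder by $n^{\frac72+3\eta-4\alpha}=o(1)$ using exactly the defining inequality of $\Delta_1$. If anything, your handling of the quadratic term is slightly more careful than the paper's displayed formula, which drops the factor $\frac12$ (an evident typo, since the paper's stated conclusion $-\hat\lh^2/(2\cC)$ agrees with your computation via $\sum_i \hat a_i^{-1} = (\hat a_1\hat a_2)^{-1}$).
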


We must clarify the meaning of the above limit.
In general, let $\Xi$ be a set with a function $n: \Xi \to \NN$.
Let $f: \Xi \to \RR$ be a function and $A \in \RR$. Then
\[ \lim_{n \to \infty;\; \hat\bb \in \Xi } f(\hat\bb) = A \]
means that given $\epsilon > 0$ there exists $n_0$ such that,
if $\hat\bb \in \Xi$ and $n(\hat\bb) > n_0$ then
\[ \left| f(\hat\bb) - A \right| < \epsilon; \]
in the above lemma, the function $n$ is the obvious one.
The set $\Xi_{\alpha,\eta}$ is implicit
(and hopefully clear from context)
whenever we use notations $\sim$ and little oh.
Thus, for instance, the lemma may be stated as
\[ P(\hat\bb) \sim \frac{1}{\sqrt{n}}\,\ga_{\cC} \hat \lh  
\quad \textrm{or} \quad
P(\hat\bb) = (1+o(1)) \frac{1}{\sqrt{n}}\,\ga_{\cC} \hat \lh.
\]



\begin{proof}
The hypotheses imply $\eta < 2\alpha - \frac32$ and therefore
\[ |\hat \lh_{i,j}| < n^\eta
< n^{(2\alpha - \frac32)} < \sqrt{n}\,\hat a_i \hat b_j \]
so that $\hat \cl_{i,j} = \hat a_i \hat b_j (1 + o(1))$.
In particular,
$\lambda(\hat\bb) \to \infty$ when $n \to \infty$
and therefore, by Lemma \ref{lem:stirling}, $P(\hat\bb) \sim \tilde P(\hat\bb)$.
We also have $\cD \sim \cC^2$.
In the notation of Lemma \ref{lem:logQ}, write
\[ \log(P(\hat\bb)) = - \frac{1}{2} \log(2\pi \cC n)
- n A + o(1), \]
\[ A = \sum_{i,j} \hat a_i \hat b_j 
\xl \left( 1 + \frac{\hat \lh_{i,j} }{\sqrt{n} \hat a_i \hat b_j} \right), \quad
\left| \frac{\hat \lh_{i,j} }{\sqrt{n} \hat a_i \hat b_j} \right| < n^{\frac32 + \eta - 2\alpha} = o(1) \]
and we may use the Taylor approximation in Equation \ref{eq:taylorxl}
(with a universal constant in the $O$) to obtain
\[ nA =
\sum_{i,j} \sqrt{n}\,\hat \lh_{i,j} +
\sum_{i,j} \frac{\hat \lh_{i,j}^2}{\hat a_i \hat b_j} +
O\left( \sum_{i,j} n^{-\frac12}
\frac{|\hat \lh_{i,j}|^3}{\hat a_i^2 \hat b_j^2} \right). \]
The first summand equals zero from equation \ref{eq:sumh}.
The second equals $\hat \lh^2/\cC$.
The third summand goes to zero since each of the four terms
is bounded by 
\[ n^{-\frac12} \cdot n^{3\eta} \cdot n^{4(1-\alpha)} =
n^{\frac72 + 3\eta - 4\alpha} = o(1). \]
Thus
\[ \log(P(\hat\bb)) = - \frac{1}{2} \log(2\pi \cC n)
- \frac{\hat \lh^2}{2 \cC} + o(1), \]
completing the proof.
\end{proof}

\begin{cor}
\label{coro:normal}
Let $(u,v) \in (0,1)^2$ be fixed.
For real numbers $t_{-} < t_{+}$, let
\[ p_n = \Pr\left[ 
t_{-}  < \sqrt{n}\;\by_n(u,v) < t_{+} 
\right]. \]
Then
\[ \lim_{n \to \infty} p_n =
\int_{t_{-}}^{t_{+}}
\ga_{\cC}(t) dt. 
\]
\end{cor}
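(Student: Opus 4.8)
The plan is to recognize $\sqrt{n}\,\by_n(u,v)$ as a bounded perturbation of the lattice-valued variable $\hat h$, and then to sum the pointwise estimate of Lemma \ref{lem:prenormal} into a Riemann sum.

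First I would set $a = \lfloor un \rfloor$, $b = \lfloor vn \rfloor$ and $\bg = (n,a,b)$, and fix any $(\alpha,\eta) \in \Delta_1$. Since $u,v \in (0,1)$ are fixed and $\alpha < 1$, for all large $n$ one has $n^\alpha < a,b < n - n^\alpha$, so $\bg$ is $\alpha$-regular. The deterministic bound $|\by_n(u_0,v_0) - \by_n(u_1,v_1)| < 4/n$ for points in a common small square (noted before the corollary), applied to $(u,v)$ and the corner $(a/n, b/n)$, gives
\[ \left| \sqrt{n}\,\by_n(u,v) - \hat h \right| < \frac{4}{\sqrt n}, \qquad \hat h = \tfrac{1}{\sqrt n}\left(X_{a,b} - \tfrac{ab}{n}\right) \in L_{\bg}. \]
Hence $p_n$ is sandwiched between $\Pr[\,t_- + 4/\sqrt n < \hat h < t_+ - 4/\sqrt n\,]$ and $\Pr[\,t_- - 4/\sqrt n < \hat h < t_+ + 4/\sqrt n\,]$, and each of these differs from $\Pr[\,t_- < \hat h < t_+\,]$ by at most finitely many lattice terms, each of probability $O(n^{-1/2})$ by Lemma \ref{lem:prenormal}; so it suffices to evaluate $\Pr[t_- < \hat h < t_+] = \sum_{\hat\lh} P(\hat\bb)$, the sum over $\hat\lh \in L_{\bg} \cap (t_-, t_+)$.

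Next I would invoke Lemma \ref{lem:prenormal} uniformly. For $n$ large enough that $n^\eta > \max(|t_-|,|t_+|)$, every $\hat\lh \in (t_-,t_+)$ satisfies $|\hat\lh| < n^\eta$, so the corresponding normalized blockings all lie in $\Xi_{\alpha,\eta}$ and the lemma gives $P(\hat\bb) = (1+o(1))\frac{1}{\sqrt n}\,\ga_{\cC}\hat\lh$ with the $o(1)$ uniform in $\hat\lh$. As there are $O(\sqrt n)$ such terms,
\[ \sum_{\hat\lh} P(\hat\bb) = (1 + o(1)) \sum_{\hat\lh} \frac{1}{\sqrt n}\,\ga_{\cC}\hat\lh . \]
Since the lattice $L_{\bg}$ has spacing $1/\sqrt n$ and $\ga_{\cC}$ is continuous (with $\cC \to u(1-u)v(1-v) > 0$, so that $\ga_{\cC}$ may be replaced by its limiting value at cost $o(1)$), the right-hand sum is a Riemann sum converging to $\int_{t_-}^{t_+} \ga_{\cC}(t)\,dt$. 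Combining the displays gives $p_n \to \int_{t_-}^{t_+}\ga_{\cC}$.

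The main point — and the only place real care is needed — is uniformity: the estimate of Lemma \ref{lem:prenormal} must hold simultaneously for all $O(\sqrt n)$ lattice points in the fixed window $(t_-,t_+)$, which is precisely why the lemma is stated as a uniform limit over $\Xi_{\alpha,\eta}$ rather than pointwise in $\hat\lh$. The remaining ingredients — the deterministic $O(n^{-1/2})$ interpolation shift, the discretization of the endpoints, and the substitution of the limiting variance for the $n$-dependent $\cC$ — all contribute only $o(1)$, because any bounded window contains $O(\sqrt n)$ lattice points each carrying mass $O(n^{-1/2})$.
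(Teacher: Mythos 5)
Your proposal is correct and follows essentially the same route as the paper's proof: reduce $\sqrt{n}\,\by_n(u,v)$ to the lattice variable $\hat h$ at $(a/n,b/n)$ with $a=\lfloor un\rfloor$, $b=\lfloor vn\rfloor$, apply Lemma \ref{lem:prenormal} uniformly over the $(\alpha,\eta)$-standard blockings with $\hat\lh\in(t_-,t_+)$, and pass to the integral via a Riemann sum on the spacing-$1/\sqrt n$ lattice. You are merely more explicit than the paper about the interpolation shift, the endpoint lattice terms, and the replacement of the $n$-dependent $\cC$ by its limit, all of which the paper treats implicitly.
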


\begin{proof}
Take fixed $(\alpha, \eta) \in \Delta_1$.
Given $n$,
set $a = \lfloor u n \rfloor$, $b = \lfloor v n \rfloor$.
Notice that $|y_n(u,v) - y_n(\frac an, \frac bn)| < 2$.
Let $\bg_n = (n,a,b)$.
For sufficiently large $n$,
the conditions $\hat l \in L_{\bg_n}$,
$t_{-} < \hat l < t_{+}$ imply that
the normalized blocking $\hat\bb_n = (\bg_n,\hat l)$
is $(\alpha,\eta)$-standard.
From Lemma \ref{lem:prenormal},
since the spacing of $L_{\bg_n}$ is $1/\sqrt{n}$, we have
\[
\lim_{n \to \infty} p_n =
\lim_{n \to \infty} \frac{1}{\sqrt{n}}
\sum_{t_{-} < \hat l < t_{+} \atop \hat l \in L_{\bg_n}}
\ga_\cC(\hat l ) \\
=
\int_{t_{-}}^{t_{+}}
\ga_{\cC}(t) dt. 
\]
\end{proof}

The following corollary gives an example of an application
of Lemma \ref{lem:prenormal} for a sequence of blockings $\bb_n$.
The $\sim$ notation below is therefore used in the more traditional sense
of a sequence limit.

\begin{cor}
\label{cor:cacb}
For fixed $(u,v) \in (0,1)^2$,
let $a = \lfloor nu \rfloor$, $b = \lfloor nv \rfloor$,
$\cC = u(1-u)v(1-v)$.
Let $0\leq r<1/6$ and 
$g(n)$ be a function such that $g(n)=o(n^t)$ for any $t>0$.
Then we have
$$\Pr\left[X_{n,a,b}=\left\lfloor nuv + g(n) n^{r+1/2}
\right\rfloor \right] \sim 
\frac{1}{\sqrt{n}} \ga_{\cC}\left( g(n) n^{r} \right).  $$
\end{cor}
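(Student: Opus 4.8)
The plan is to read Corollary \ref{cor:cacb} as a direct consequence of Lemma \ref{lem:prenormal}: once the relevant normalized blocking is shown to be $(\alpha,\eta)$-standard for suitable parameters, the lemma supplies the asymptotics, and the only remaining work is to replace the blocking-dependent variance and the lattice-rounded argument that appear there by the clean quantities $\cC = u(1-u)v(1-v)$ and $g(n) n^r$ of the statement. First I would fix the data: set $\cl = \lfloor nuv + g(n) n^{r+1/2}\rfloor$, form the grid $\bg_n = (n,a,b)$ and the normalized blocking $\hat\bb_n = (\bg_n,\hat\lh)$ with $\hat\lh = \frac{1}{\sqrt n}(\cl - \frac{ab}{n})$. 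By construction $\cl \in \ZZ$, so $\hat\lh \in L_{\bg_n}$ automatically. Since $a = nu + O(1)$ and $b = nv + O(1)$ give $\frac{ab}{n} = nuv + O(1)$, and the floor contributes a further $O(1)$, we get the key expansion $\hat\lh = g(n) n^r + O(n^{-1/2})$.

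Next I would choose parameters. Because $r < 1/6$, I can pick $\eta$ with $r < \eta < 1/6$ and then $\alpha \in (7/8,1)$ so that $(\alpha,\eta) \in \Delta_1$. For large $n$ the grid $\bg_n$ is $\alpha$-regular, since $u,v \in (0,1)$ are fixed and $\alpha < 1$ force $n^\alpha < a,b < n - n^\alpha$ eventually; and $|\hat\lh| < n^\eta$, since $g(n) = o(n^{\eta-r})$ (subpolynomial growth, $\eta - r > 0$) yields $g(n)n^r = o(n^\eta)$. Hence $\hat\bb_n \in \Xi_{\alpha,\eta}$ for $n$ large, and Lemma \ref{lem:prenormal} gives $P(\hat\bb_n) \sim \frac{1}{\sqrt n}\,\ga_{\cC_n}(\hat\lh)$, where I write $\cC_n = \frac{a(n-a)b(n-b)}{n^4}$ for the blocking-dependent value of $\cC$, to distinguish it from the fixed $\cC = u(1-u)v(1-v)$ appearing in the conclusion.

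The only genuine step is the last reconciliation, passing from $\ga_{\cC_n}(\hat\lh)$ to $\ga_{\cC}(g(n) n^r)$. Two perturbations enter: the variance drift $\cC_n - \cC = O(1/n)$ (since $a/n = u + O(1/n)$, etc.) and the argument drift $\delta := \hat\lh - g(n)n^r = O(n^{-1/2})$; both get multiplied against the size of the argument inside the Gaussian exponent, so I must verify the corrections vanish. Writing $x = g(n) n^r$ and comparing logarithms, the prefactor ratio $\sqrt{\cC/\cC_n} \to 1$, while the exponent difference splits into $x^2(\cC_n^{-1} - \cC^{-1}) = O\!\big(g(n)^2 n^{2r-1}\big)$ and $(2x\delta + \delta^2)/\cC_n = O\!\big(g(n)\,n^{r-1/2}\big)$. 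Since $r < 1/6$ gives $2r-1 < -2/3$ and $r-1/2 < -1/3$, and $g$ is subpolynomial, both bounds tend to $0$; therefore $\ga_{\cC_n}(\hat\lh) \sim \ga_{\cC}(g(n) n^r)$ and the corollary follows. This is exactly where $r < 1/6$ (matching the constraint $\eta < 1/6$ imposed by $\Delta_1$ in \eqref{eq:Delta1}) is used twice: once to keep $\hat\bb_n$ inside $\Xi_{\alpha,\eta}$, and once to kill the variance- and argument-drift corrections in the final comparison.
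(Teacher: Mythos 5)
Your proposal is correct and follows the paper's own route: the paper's proof likewise picks $(\alpha,\eta)\in\Delta_1$ with $r<\eta$, checks that the blocking $\bb_n=\big(n,a,b,\lfloor nuv+g(n)n^{r+1/2}\rfloor\big)$ is $(\alpha,\eta)$-standard, and invokes Lemma \ref{lem:prenormal}. Your final reconciliation step --- verifying that the variance drift $\cC_n-\cC=O(1/n)$ and the argument drift $\hat\lh-g(n)n^r=O(n^{-1/2})$ contribute $o(1)$ to the Gaussian exponent --- is exactly the detail the paper compresses into ``follows directly,'' so you have supplied a more careful version of the same argument rather than a different one.
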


\begin{proof}
Take $(\alpha,\eta) \in \Delta_1$ with $r < \eta$.
For sufficiently large $n$, the blocking
\[ \bb_n = \left(n,a,b,\left\lfloor nuv + g(n) n^{r+1/2}
\right\rfloor \right) \]
is $(\alpha,\eta)$-standard.
The result now follows directly from Lemma \ref{lem:prenormal}.
\end{proof}

Lemma \ref{lem:prenormal} above gives a good estimate for
$P(\hat\bb)$ when $\cl$ is relatively near $\frac{ab}{n}$,
or, equivalently, when $\hat l$ is small.
In other words, the probability of $|Y_{n,a,b}|$ (or $|\hat l|$)
being large is small,
but we shall need something more precise.
The following lemma addresses these concerns.
Notice also that Lemma \ref{lem:ineq} below
is not a corollary of Lemma \ref{lem:prenormal}.

\begin{lem}
\label{lem:ineq}
Let $(\alpha, \eta) \in \Delta_1$ be fixed.
Then there exists $n_0$ such that, if $n > n_0$,
for any $\alpha$-regular grid $\bg = (n,a,b)$
and $H \in (1, 2 n^\eta)$,
then
\[ \Pr\left[ | Y_{n,a,b} | > H \sqrt{\cC n} \right] < 
\frac{1}{H} \; \exp\left( - \frac{H^2}{2} \right). \]
\end{lem}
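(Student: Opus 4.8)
The plan is to bound the tail probability $\Pr[|Y_{n,a,b}| > H\sqrt{\cC n}]$ by summing the pointwise probabilities $P(\hat\bb) = \Pr[\hat h = \hat\lh]$ over all lattice points $\hat\lh \in L_\bg$ with $|\hat\lh| > H\sqrt{\cC}$, and comparing that sum to the tail of the Gaussian $\ga_\cC$. The target bound $\frac1H \exp(-H^2/2)$ is exactly (a clean upper bound for) the Gaussian tail $\int_{H\sqrt\cC}^\infty \ga_\cC(t)\,dt$ after the substitution $t = s\sqrt\cC$, since $\int_H^\infty \frac1{\sqrt{2\pi}} e^{-s^2/2}\,ds < \frac1H\,\ga(H) < \frac1H e^{-H^2/2}$ by the standard Mills-ratio estimate. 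So morally the lemma says ``the discrete tail is controlled by the Gaussian tail,'' and the work is in making the comparison uniform over $\alpha$-regular grids and valid out to $|\hat\lh| \approx 2n^\eta$, where Lemma~\ref{lem:prenormal} no longer directly applies (it only controls $P(\hat\bb)$ for $|\hat\lh| < n^\eta$).

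\textbf{Step 1: a uniform pointwise upper bound on $P(\hat\bb)$.} First I would revisit the proof of Lemma~\ref{lem:prenormal} and extract, instead of the asymptotic equality, a one-sided inequality of the form $P(\hat\bb) \le \frac{C_1}{\sqrt n}\,\exp\!\big(-\frac{\hat\lh^2}{2\cC}(1-o(1))\big)$ valid uniformly for all $\hat\lh \in L_\bg$ with $|\hat\lh|$ up to $2n^\eta$. The key input is Lemma~\ref{lem:logQ}, which gives $\log \tilde P(\hat\bb)$ exactly in terms of $nA = n\sum_{i,j}\hat a_i\hat b_j\,\xl\!\big(1 + \frac{\hat\lh_{i,j}}{\sqrt n\,\hat a_i\hat b_j}\big)$. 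The linear term vanishes by \eqref{eq:sumh}, and since $\xl(1+t) \ge t + \frac{t^2}{2} - |t|^3$ (or a similar convexity-based lower bound on $\xl$) I can bound $nA$ from below by $\frac{\hat\lh^2}{2\cC}$ times a factor approaching $1$. The cubic error term is $O(n^{-1/2}\sum_{i,j}|\hat\lh_{i,j}|^3/(\hat a_i\hat b_j)^2)$; with $|\hat\lh| \le 2n^\eta$, $\hat a_i\hat b_j \gtrsim n^{2(\alpha-1)}$, and $(\alpha,\eta)\in\Delta_1$ forcing $\tfrac72 + 3\eta - 4\alpha < 0$, this error is $o(1)$ uniformly. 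Because $\tilde P$ controls $P$ from Lemma~\ref{lem:stirling} (sparsity stays large in this range), I obtain the desired uniform Gaussian-type upper bound on each $P(\hat\bb)$.

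\textbf{Step 2: sum over the lattice and compare to the Gaussian integral.} With the pointwise bound in hand, I would write
\[
\Pr\big[|Y_{n,a,b}| > H\sqrt{\cC n}\big]
= \sum_{\substack{\hat\lh \in L_\bg \\ |\hat\lh| > H\sqrt{\cC}}} P(\hat\bb)
\le \frac{C_1}{\sqrt n} \sum_{\substack{\hat\lh \in L_\bg \\ |\hat\lh| > H\sqrt{\cC}}} \exp\!\Big(\!-\frac{\hat\lh^2}{2\cC}(1-o(1))\Big).
\]
Since $L_\bg$ has spacing $1/\sqrt n$, the prefactor $1/\sqrt n$ turns the sum into a Riemann-sum approximation of $\int_{|t|>H\sqrt\cC}\exp(-t^2/(2\cC))\,dt$, up to the monotone-decay comparison that lets a sum of a decreasing function be dominated by the corresponding integral. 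Normalizing and using the Mills-ratio tail estimate $\int_H^\infty \ga(s)\,ds < \frac1H\ga(H)$ yields the clean bound $\frac1H e^{-H^2/2}$. I must keep the $(1-o(1))$ from Step~1 harmless: choosing $n_0$ large enough that the correction factor exceeds, say, $1 - \epsilon$ absorbs it into the constants, but to land exactly on $\frac1H e^{-H^2/2}$ I would carry a slightly sharper tail estimate (e.g.\ $\frac1H\ga(H) = \frac{1}{H\sqrt{2\pi}}e^{-H^2/2}$, whose $\frac1{\sqrt{2\pi}}$ factor gives room to absorb both the Riemann-sum error and the $(1-o(1))$).

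\textbf{The main obstacle} is the uniformity in Step~1 at the \emph{outer} end of the range, $|\hat\lh|\approx 2n^\eta$, which is precisely where Lemma~\ref{lem:prenormal} is stated as an asymptotic at a single scale rather than as a uniform bound over the whole tail; I cannot simply quote it and must instead redo its estimate as a genuine inequality valid simultaneously for all relevant $\hat\lh$, checking that the cubic Taylor remainder stays $o(1)$ uniformly (this is exactly what the constraint $6\eta < 8\alpha - 7$ defining $\Delta_1$ guarantees). A secondary technical point, flagged by the remark that this lemma is \emph{not} a corollary of Lemma~\ref{lem:prenormal}, is that the summation ranges over $\hat\lh$ possibly exceeding $n^\eta$, so I should confirm the pointwise bound degrades gracefully (the Gaussian decay only gets stronger) rather than breaking down, and that points with $|\hat\lh| \ge 2n^\eta$ contribute negligibly because the condition $H < 2n^\eta$ keeps the truncation inside the controlled region while the far tail is super-exponentially small.
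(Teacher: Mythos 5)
There is a genuine gap, and it is exactly the one the paper's remark (``Lemma \ref{lem:ineq} is not a corollary of Lemma \ref{lem:prenormal}'') is warning about: your treatment of the far tail. The event $|Y_{n,a,b}| > H\sqrt{\cC n}$ sums over lattice points with $|\hat\lh|$ ranging all the way out to order $\sqrt{n}$ (since $\cl$ can be as large as $\min(a,b)$), not just out to $2n^\eta$. In that outer regime your Step 1 machinery fails for two reasons: the expansion parameter $\hat\lh_{i,j}/(\sqrt{n}\,\hat a_i \hat b_j)$ is no longer small (it can be of order $1$ or larger, so the cubic Taylor remainder is uncontrolled), and the sparsity $\lambda(\bb)=\min_{i,j}\dot\cl_{i,j}$ need not be large --- it can even be $0$ at extreme $\cl$ --- so Lemma \ref{lem:stirling} ($P\sim\tilde P$) is unavailable as well. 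Your dismissal of these points (``the far tail is super-exponentially small'', ``the Gaussian decay only gets stronger'') is circular: that smallness is precisely what must be proved there, and nothing you established implies it. It is fixable (e.g.\ by a Hoeffding-type bound for sampling without replacement, or by convexity of $\xl$ showing $nA$ grows monotonically in $|\hat\lh|$), but some argument must be supplied. Conversely, your stated ``main obstacle'' is not actually one: Lemma \ref{lem:prenormal} is already uniform over $\Xi_{\alpha,\eta}$ by the paper's explicit limit convention, and since $\Delta_1$ is open you can enlarge $\eta$ slightly to cover $|\hat\lh|\le 2n^\eta$; indeed $H\sqrt{\cC}\le \frac{1}{4}\cdot 2n^\eta < n^\eta$ anyway because $\cC\le 1/16$.

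The paper sidesteps the far tail entirely with a different device: a monotone likelihood-ratio (log-concavity) argument. From the exact formula of Lemma \ref{lem:exact}, the ratio $\lambda_\cl = p_{\cl+1}/p_\cl = \frac{(a-\cl)(b-\cl)}{(\cl+1)(n-a-b+\cl+1)}$ is decreasing in $\cl$, so the whole tail --- including the regime where no Stirling or Taylor control exists --- is dominated by a geometric series anchored at the single boundary blocking $(\bg,\cl_0+1)$, which is $(\alpha,\eta)$-standard; Lemma \ref{lem:prenormal} is invoked at that one point only. The estimate $1-\lambda_{\cl_0} = (1+O(n^{-\epsilon}))\,H/\sqrt{\cC n}$ then produces the factor $\sqrt{\cC n}/H$ converting the density value $\frac{1}{\sqrt{2\pi\cC n}}e^{-H^2/2}$ into $\frac{1}{H\sqrt{2\pi}}e^{-H^2/2}$ per side --- the same Mills-ratio factor your Riemann sum would generate, but with the uncontrolled region handled for free. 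If you keep your summation approach you must add the missing far-tail estimate; adopting the geometric-ratio argument is simpler and is what the paper does.
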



\begin{proof}
We prove that
\[ \Pr\left[ Y_{n,a,b} > H \sqrt{\cC n} \right] <
\frac{1}{2H} \; \exp\left( - \frac{H^2}{2} \right). \]
Let 
\[
p_l =  P(\bg,\cl); \quad
\cl_0 = \left\lfloor \frac{ab}{n} + H \sqrt{\cC n} \right\rfloor;
\]
so that
\[ \tilde p = \Pr\left[X_{n,a,b} > \frac{ab}{n} + H \sqrt{\cC n} \right] =
p_{\cl_0+1} + p_{\cl_0+2} + \cdots + p_{\cl_0+k} + \cdots. \]
The blocking $(\bg,\cl_0+1)$ is $(\alpha,\eta)$-standard
and therefore, from Lemma \ref{lem:prenormal},
\[ p_{\cl_0} <
(1+o(1)) \; \frac{1}{\sqrt{2\pi \cC n}} \;
\exp\left( - \frac{H^2}{2}  \right). 
\]
Let $\lambda_l = p_{l+1}/p_l$.
From Lemma \ref{lem:exact},
\[ \lambda_l = \frac{(a-\cl)(b-\cl)}{(l+1)(n-a-b+\cl+1)}, \]
a decreasing function of $\cl$.
Thus, $p_{\cl_0+k} \le \lambda_{\cl_0}^k p_{\cl_0}$
and therefore
\[ \tilde p < (1+o(1))\; \frac{1}{1-\lambda_{\cl_0}}\;
\frac{1}{\sqrt{2\pi n \cC}} \;
\exp\left( - \frac{H^2}{2}  \right). 
\]
We now proceed to estimate $\lambda_{\cl_0}$.
Set $\hat \cl_0 = \cl_0/n \approx \hat a\hat b + H \sqrt{\cC} n^{-1/2}$.
\begin{align*}
\lambda_{\cl_0} &= \frac{(a-\cl_0)(b-\cl_0)}{(\cl_0+1)(n-a-b+\cl_0+1)} \\
&\approx \frac{(\hat a - \hat \cl_0)(\hat b - \hat \cl_0)}
{\hat \cl_0(1 - \hat a - \hat b + \hat \cl_0)} \\
&\approx \frac{(\hat a - \hat a\hat b - H \sqrt{\cC} n^{-1/2})
(\hat b - \hat a\hat b - H \sqrt{\cC} n^{-1/2})}
{(\hat a\hat b + H \sqrt{\cC} n^{-1/2})
(1 - \hat a - \hat b + \hat a\hat b  + H \sqrt{\cC} n^{-1/2})}.
\end{align*}
The approximations here are due to taking integer parts
and can safely be disconsidered.
Expanding, we have
\begin{align*}
\lambda_{\cl_0} &= \frac{
\cC + (2\hat a\hat b - \hat a - \hat b) H \sqrt{\cC} n^{-1/2} + H^2 \cC n^{-1}}
{\cC + (1+2\hat a\hat b - \hat a - \hat b) H \sqrt{\cC} n^{-1/2} + H^2 \cC n^{-1}}
\\
&= 1 - \left( \frac{H}{\sqrt{\cC n}} (1 + O(n^{-\epsilon})) \right)
\end{align*}
where $\epsilon > 0$.
We therefore have
\[ \frac{1}{1-\lambda_{\cl_0}} =
(1+O(n^{-\epsilon}))\;\frac{\sqrt{\cC n}}{H}, \]
which yields the desired estimate.
A similar argument proves that
\[ \Pr\left[ Y_{n,a,b} < - H \sqrt{\cC n} \right] <
\frac{1}{2H} \; \exp\left( - \frac{H^2}{2} \right), \]
completing the proof.
\end{proof}

\section{Grids and joint distributions}
\label{sect:grid}

In this section we generalize some of the results
of Section \ref{sect:pointwise} to \emph{grids}.

Recall that a \textit{bridged Brownian sheet} is a process
yielding continuous functions $f \in C^0_0(\II^2)$.
We provide a characterization
which will be appropriate to our proof.
Consider a $\mi \times \mj$ \emph{grid} $\bg$
of points $(u_i,v_j) \in \II^2$ where
\[ 0 = u_0 < u_1 < \cdots < u_{\mi} = 1, \quad
0 = v_0 < v_1 < \cdots < v_{\mj} = 1. \]
In order to generate the values $f\left(u_i,v_j\right)$
of a bridged Brownian sheet on the grid,
take independent and normally distributed numbers $Z_{i,j}$,
$1 \le i \le \mi$, $1 \le j \le \mj$,
with average $0$ and variance $1$, to obtain a vector $z \in \RR^{\mi\mj}$.
Let $V_\bg \subset \RR^{\mi\mj}$ be
the subspace of dimension $(\mi - 1)(\mj - 1)$ defined by
\[ \sum_i \sqrt{u_i - u_{i-1}}\, z_{i,j} = 0, \quad
\sum_j \sqrt{v_j - v_{j-1}}\, z_{i,j} = 0. \]
Project $z \in \RR^{\mi\mj}$ orthogonally to $\tilde z \in V_\bg$.
Set
\begin{equation}
\label{eq:jointbrown}
f\left(u_i,v_j\right) =
\sum_{i'\le i; \; j'\le j}
\sqrt{(u_{i'} - u_{i'-1})(v_{j'} - v_{j'-1})}\;\tilde z_{i',j'}.
\end{equation}
Thus, the values of $f$ on a fixed set of points follow
a joint normal distribution.
In particular, for fixed $(u,v) \in \II^2$,
the random variable $f(u,v)$
has normal distribution with average $0$ and variance
$u(1-u)v(1-v)$.
In Section \ref{sect:largegrid}  we  focus
on square grids $\mi = \mj = m$,
$u_i = \frac{i}{m}$, $v_j = \frac{j}{m}$.

\medskip

This section is dedicated to the following result.

\begin{thm}
\label{theo:joint}
Given $(u_1,v_1), \ldots (u_m,v_m) \in (0,1)^2$,
the joint distribution of $\sqrt{n}\,\by_n(u_i,v_i)$
converges (when $n \to +\infty$)
to the joint distribution of $f(u_i,v_i)$ for a 
bridged Brownian sheet $f$.
\end{thm}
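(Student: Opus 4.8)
The plan is to promote the single-point local limit analysis of Lemma \ref{lem:prenormal} to a multivariate local limit theorem on a product grid, and then to match the resulting discrete Gaussian with the law described by Equation \ref{eq:jointbrown}. First I would reduce to a product grid: collect the distinct first coordinates of the points $(u_1,v_1),\ldots,(u_m,v_m)$ into $0 = u'_0 < u'_1 < \cdots < u'_I = 1$ and the distinct second coordinates into $0 = v'_0 < \cdots < v'_J = 1$, so that each $(u_k,v_k)$ becomes a grid vertex $(u'_{i(k)},v'_{j(k)})$. Setting $a_i = \lfloor u'_i n\rfloor$ and $b_j = \lfloor v'_j n\rfloor$ partitions the $n\times n$ square into cells of sizes $\dot a_i\times\dot b_j$ with $\dot a_i = a_i - a_{i-1}$ and $\dot b_j = b_j - b_{j-1}$; let $\dot\cl_{i,j}$ be the number of ones of $X_n$ in cell $(i,j)$ and let $\hat\lh_{i,j} = \sqrt{n}(\hat\cl_{i,j} - \hat a_i\hat b_j)$ be the associated rescaled, recentered cell excess, where $\hat a_i = \dot a_i/n$, $\hat b_j = \dot b_j/n$, $\hat\cl_{i,j} = \dot\cl_{i,j}/n$. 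Since $\sqrt{n}\,\by_n(u'_i,v'_j)$ equals, up to a rounding error of size $4/\sqrt{n}$ controlled by the bound $|\by_n(u_0,v_0) - \by_n(u_1,v_1)| < 4/n$, the partial sum $\sum_{i'\le i,\,j'\le j}\hat\lh_{i',j'}$, it suffices to prove that the joint law of all the cell excesses $\hat\lh_{i,j}$ converges to the corresponding Gaussian; the law of the $m$ prescribed values is then a linear image (a marginal) of this.

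Next I would establish the multivariate local limit theorem. The exact probability generalizes Lemma \ref{lem:exact}: the number of permutation matrices with prescribed cell counts $\dot\cl_{i,j}$ is $\prod_i \dot a_i!\,\prod_j \dot b_j!/\prod_{i,j}\dot\cl_{i,j}!$ (within each row block decide, by a multinomial choice, which column block each row maps to, then match rows to columns inside each column block), so that
\[ \Pr[\,\forall i,j:\ \dot\cl_{i,j}\text{ ones in cell }(i,j)\,] = \frac{\prod_i \dot a_i!\,\prod_j \dot b_j!}{n!\,\prod_{i,j}\dot\cl_{i,j}!}. \]
Applying Stirling's formula (\ref{eq:stirling}) exactly as in Lemmas \ref{lem:stirling} and \ref{lem:logQ} reduces the logarithm of this probability to $-\tfrac12\log(\cdots) - nA$ with $A = \sum_{i,j}\hat a_i\hat b_j\,\xl(1 + \hat\lh_{i,j}/(\sqrt{n}\,\hat a_i\hat b_j))$, the only change being the ranges of the indices. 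The linear constraints $\sum_j \hat\lh_{i,j} = 0$ and $\sum_i \hat\lh_{i,j} = 0$ (the grid analogue of Equation \ref{eq:sumh}, coming from the fixed block sums $\dot a_i,\dot b_j$) kill the first-order term of the Taylor expansion (\ref{eq:taylorxl}) exactly as in Lemma \ref{lem:prenormal}, and the cubic remainder is $o(1)$ on any bounded set of excesses. This yields, for $\hat\lh_{i,j}$ ranging over a fixed compact set,
\[ \Pr[\,\cdots\,] = (1 + o(1))\, n^{-(I-1)(J-1)/2}\,\kappa\,
\exp\Big(-\tfrac12\sum_{i,j}\frac{\hat\lh_{i,j}^2}{\hat a_i\hat b_j}\Big), \]
where $\kappa$ is an explicit normalizing constant and $(I-1)(J-1)$, the number of independent excesses, is the dimension of the constraint lattice. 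As the box of interest is fixed, every such grid configuration is eventually standard, so no tail estimate beyond what is already available (Lemma \ref{lem:ineq}) is required here.

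Finally I would identify the limiting law with Equation \ref{eq:jointbrown}. Writing $w_{i,j} = \sqrt{(u'_i - u'_{i-1})(v'_j - v'_{j-1})}\,\tilde z_{i,j}$ for the cell increments of the bridged sheet, the defining constraints $\sum_i \sqrt{u'_i - u'_{i-1}}\,z_{i,j} = 0$ and $\sum_j \sqrt{v'_j - v'_{j-1}}\,z_{i,j} = 0$ become $\sum_i w_{i,j} = 0$ and $\sum_j w_{i,j} = 0$, while the standard Gaussian $\exp(-\tfrac12\sum_{i,j}\tilde z_{i,j}^2)$ on $V_\bg$ becomes $\exp(-\tfrac12\sum_{i,j} w_{i,j}^2/((u'_i - u'_{i-1})(v'_j - v'_{j-1})))$. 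Since $\hat a_i \to u'_i - u'_{i-1}$ and $\hat b_j \to v'_j - v'_{j-1}$, this is exactly the exponent of the local limit theorem, with $\hat\lh_{i,j}$ playing the role of $w_{i,j}$; and as $f(u'_i,v'_j) = \sum_{i'\le i,\,j'\le j} w_{i',j'}$ mirrors the partial-sum identity for $\sqrt{n}\,\by_n$, the two Gaussians agree. A Riemann-sum argument identical to that of Corollary \ref{coro:normal}, summing the discrete masses over lattice points in a fixed box with spacing $1/\sqrt{n}$ in each of the $(I-1)(J-1)$ free directions, converts the pointwise density convergence into convergence of the joint distribution function, and restricting to the $m$ chosen vertices gives the theorem. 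I expect the matching step to be the main obstacle: one must verify that the covolume of the constraint lattice and the Jacobian of the change of variables $\tilde z \mapsto w$ combine with $\kappa$ so that the normalizations coincide — equivalently, that the bridging (the projection onto $V_\bg$) corresponds precisely to the discrete row- and column-sum constraints — since this is what guarantees that the local density integrates against the correct multivariate Gaussian rather than merely a proportional one.
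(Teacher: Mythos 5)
Your proposal is correct and follows essentially the same route as the paper: reduce to a product grid, count configurations exactly (the paper's Lemma \ref{lem:moreexact}, proved by the same wide-column multinomial argument you sketch), apply Stirling and the Taylor expansion of $\xl(1+t)$ to obtain a multivariate local limit theorem (Lemmas \ref{lem:morestirling}, \ref{lem:morelogQ}, \ref{lem:moreprenormal}), control the tails via Lemma \ref{lem:ineq} (packaged in the paper as Lemma \ref{lem:goodh}), and conclude by a Riemann-sum/lattice-refinement argument. The matching obstacle you flag at the end is dissolved in the paper by a change of normalization: working with $\tilde\lh_{i,j} = \hat\lh_{i,j}/\sqrt{\hat a_i\hat b_j}$ (Equation \ref{eq:tildeh}) makes the limit the \emph{standard} Gaussian on the constraint subspace $V_\bg$ of Equation \ref{eq:Vg}, which is literally the law of $\tilde z$ in Equation \ref{eq:jointbrown}, and the normalizing constant can be pinned down without any covolume or Jacobian computation by a total-mass argument (used explicitly in Lemma \ref{lem:goodvoronoi}): since both the discrete probabilities and the Gaussian cell masses sum to $1-o(1)$ over the standard region, the proportionality constant is forced to equal $1$.
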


Notice that the case $m = 1$ of this theorem
follows from Section \ref{sect:pointwise}.

As before, $n$ is a positive integer
and $X$ is a random discrete copula with uniform distribution.
A $\mi \times \mj$ \textit{grid} is a pair $\bg = (a,b)$ of families,
$(a_i)_{0 \le i \le \mi}$ and $(b_j)_{0 \le j \le \mj}$
of integers with
\[ 0 = a_0 < \cdots < a_i < \cdots < a_{\mi} = n, \quad
0 = b_0 < \cdots < b_j < \cdots < b_{\mj} = n. \]
As in Figure \ref{fig:grid},
the values of $a_i$ and $b_j$ should be interpreted as
horizontal and vertical lines dividing a matrix $M$
into \textit{boxes} of dimensions
$\dot a_i \times \dot b_j$ defined by
\[
\dot a_i = a_i - a_{i-1}, \quad
a_{i_0} = \sum_{i \le i_0} \dot a_i, \quad
\dot b_j = b_j - b_{j-1}, \quad
b_{j_0} = \sum_{j \le j_0} \dot b_j.
\]
Given a discrete copula $C$ and a grid $\bg = (a,b)$,
the number of $1$'s (of $M$) in box $(i,j)$ is
$C_{a_i,b_j} - C_{a_{i-1},b_j} - C_{a_i,b_{j-1}} + C_{a_{i-1},b_{j-1}}$;
we are interested in the statistics of such numbers.


\begin{figure}[ht]
\centering
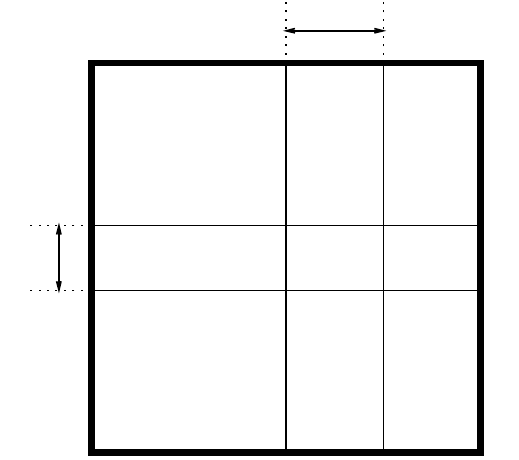
\caption{The box $(i,j)$}
\label{fig:grid}
\end{figure}

Let $\cl = (\cl_{i,j})$ be a family of integers with
\[ i_0 \le i_1, j_0 \le j_1 \Rightarrow
\cl_{i_0,j_0} + \cl_{i_1,j_1} \ge \cl_{i_0,j_1} + \cl_{i_1,j_0}, \]
\[ \cl_{i,0} = \cl_{0,j} = 0, \quad \cl_{i,\mj} = a_i, \quad \cl_{\mi,j} = b_j. \]
Define
\[ \dot \cl_{i,j} = \cl_{i,j} - \cl_{i-1,j} - \cl_{i,j-1} + \cl_{i-1,j-1}, \quad
\cl_{i_0,j_0} = \sum_{i \le i_0 \atop j \le j_0} \dot \cl_{i,j}. \]
For a blocking $\bb = (\bg,\cl)$, we are interested in estimating
\[ P(\bb) =
\Pr\left[ \forall i, \forall j, X_{a_i,b_j} = \cl_{i,j} \right]; \]
this is the probability that each box $(i,j)$
contains precisely $\dot \cl_{i,j}$ $1$'s.
The situation described in Section \ref{sect:pointwise}
corresponds to $\mi = \mj = 2$
(the old $a$ is the new $a_1$; the old $\cl$ is the new $\cl_{1,1}$).
A careful estimate of $P(\bb)$ will prove Theorem \ref{theo:joint}
for the special case of points in a grid.
This special case implies the general one:
indeed, it suffices to add points in order to complete a grid.

First a simple remark which shall be used several times:
there is nothing special about the upper left corner, 
and we can count nonzero entries is any rectangular submatrix.

\begin{lem}
\label{lem:Delta}
Let $0 < a_1 < a_2 < n$, $0 < b_1 < b_2 < n$ and $\cl$ be integers.
Then
\[ \Pr\left[
X_{a_2,b_2} - X_{a_2,b_1} - X_{a_1,b_2} + X_{a_1,b_1} = \cl \right] =
P(n,a_2-a_1,b_2-b_1,\cl). \]
\end{lem}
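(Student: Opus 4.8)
The plan is to reduce the statement to a symmetry argument, exploiting two facts about a uniformly random permutation matrix $M \in S_n$: that the inclusion--exclusion combination on the left-hand side counts the $1$'s of $M$ inside a fixed rectangular box, and that the uniform distribution on $S_n$ is invariant under independent permutations of rows and columns.

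First I would record that, for the discrete copula $X_{a,b} = C_{a,b}$ associated with $M$, the combination
\[ X_{a_2,b_2} - X_{a_2,b_1} - X_{a_1,b_2} + X_{a_1,b_1} \]
equals the number of $1$'s of $M$ located in rows $i$ with $a_1 < i \le a_2$ and columns $j$ with $b_1 < j \le b_2$. This is exactly the inclusion--exclusion identity already used to obtain $\dot \cl_{i,j}$ from $\cl_{i,j}$ for the boxes of a grid, specialized to a single box with $a_2 - a_1$ rows and $b_2 - b_1$ columns. Hence the event in question is precisely that this box contains exactly $\cl$ entries equal to $1$.

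Next I would invoke invariance of the uniform measure on $S_n$: if $M$ is uniform in $S_n$ and $P, Q$ are any fixed permutation matrices, then $PMQ$ is again uniform in $S_n$, since left and right multiplication by permutation matrices merely permute the finite set $S_n$ and therefore preserve the counting measure. Choosing $P$ and $Q$ to send the rows $\{a_1+1, \ldots, a_2\}$ and the columns $\{b_1+1, \ldots, b_2\}$ to the first $a_2-a_1$ rows and the first $b_2-b_1$ columns, the number of $1$'s in our box acquires the same distribution as the number of $1$'s in the upper-left $(a_2-a_1) \times (b_2-b_1)$ submatrix, namely $X_{a_2-a_1,\,b_2-b_1}$. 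By the definition of $P$ together with Lemma \ref{lem:exact}, the probability that this equals $\cl$ is $P(n, a_2-a_1, b_2-b_1, \cl)$, which is the claim.

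I do not expect a genuine obstacle here: the entire content is the two observations above, both routine for uniform permutation matrices. The only step deserving a line of care is verifying that the permutation of rows and columns carrying the chosen $a_2-a_1$ rows and $b_2-b_1$ columns to the upper-left corner is a bijection of $S_n$ onto itself, so that it transports the uniform measure exactly rather than merely approximately; this is immediate from the group structure, and the hypotheses $0 < a_1 < a_2 < n$ and $0 < b_1 < b_2 < n$ simply guarantee that the box is a nonempty rectangle to which Lemma \ref{lem:exact} applies.
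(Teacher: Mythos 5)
Your proof is correct and is essentially the paper's own argument: the paper's one-line proof (``cyclically permute rows and columns of the corresponding matrix $M$'') is exactly your invariance step, with a cyclic shift as the specific choice of the permutations $P$ and $Q$ carrying the box to the upper-left corner. You have merely spelled out the details (inclusion--exclusion identifying the event, and uniformity of $PMQ$) that the paper leaves implicit.
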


\begin{proof}
Cyclically permute rows and columns of the corresponding matrix $M$.
\end{proof}

The following lemma generalizes Lemma \ref{lem:exact}.

\begin{lem}
\label{lem:moreexact}
\[ P(\bb) =
\frac{
\left( \prod_{0 < i \le \mi} \dot a_i! \right)
\left( \prod_{0 < j \le \mj} \dot b_j! \right)
}{
n!
\left( \prod_{0 < i \le \mi \atop 0 < j \le \mj} \dot \cl_{i,j}! \right)
}.
\]
\end{lem}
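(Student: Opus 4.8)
The plan is to count, for a fixed grid $\bg = (a,b)$ and fixed target counts $\dot\cl_{i,j}$, the number of permutation matrices $M \in S_n$ that place exactly $\dot\cl_{i,j}$ ones in each box $(i,j)$, and then divide by $|S_n| = n!$ to obtain the probability. The generalization of Lemma \ref{lem:exact} is purely combinatorial, exactly as the earlier statement ``An expression for this probability is a simple combinatorial problem'' suggests, so no limiting or analytic machinery is needed here.

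First I would set up the counting as a two-stage process. A permutation matrix has exactly one $1$ in each row and each column. The grid partitions the rows into consecutive blocks of sizes $\dot a_1, \ldots, \dot a_{\mi}$ and the columns into blocks of sizes $\dot b_1, \ldots, \dot b_{\mj}$. Requiring box $(i,j)$ to contain exactly $\dot\cl_{i,j}$ ones means that among the $\dot a_i$ ones lying in row-block $i$, exactly $\dot\cl_{i,j}$ of them fall in column-block $j$; by the constraints $\cl_{i,\mj} = a_i$ and $\cl_{\mi,j} = b_j$, summing over $j$ gives $\sum_j \dot\cl_{i,j} = \dot a_i$ and summing over $i$ gives $\sum_i \dot\cl_{i,j} = \dot b_j$, so the target counts are consistent with a genuine permutation (each row-block of height $\dot a_i$ carries $\dot a_i$ ones, and each column-block of width $\dot b_j$ receives $\dot b_j$ ones).

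The core step is to count the assignments. I would first choose, for each row-block $i$, which of its $\dot a_i$ ones go into which column-block: this is a distribution of $\dot a_i$ labelled rows into the column-blocks with multiplicities $(\dot\cl_{i,1}, \ldots, \dot\cl_{i,\mj})$, contributing a multinomial $\dot a_i! / \prod_j \dot\cl_{i,j}!$. Taking the product over $i$ accounts for which row goes to which column-block, giving $\prod_i \dot a_i! / \prod_{i,j}\dot\cl_{i,j}!$. Then, within each column-block $j$, the $\dot b_j$ ones assigned to it must be placed into distinct columns; since their rows are already fixed and all distinct, this is a bijection between the assigned rows and the $\dot b_j$ columns of the block, contributing a factor $\dot b_j!$, and the product over $j$ gives $\prod_j \dot b_j!$. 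Multiplying these and dividing by $n!$ yields exactly the claimed formula. A cleaner bookkeeping device, which I would likely use to avoid double counting, is to observe that the total count equals $\big(\prod_i \dot a_i!\big)\big(\prod_j \dot b_j!\big)/\prod_{i,j}\dot\cl_{i,j}!$ by a standard transportation-polytope / contingency-table argument: permutation matrices with prescribed box sums are in bijection with the orbits that this ratio counts.

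The main obstacle is just ensuring the combinatorial count is neither over- nor under-counted — in particular, verifying that fixing the column-block assignment and then the within-block column placement partitions the set of admissible $M$ bijectively. For $\mi = \mj = 2$ one can sanity-check against Lemma \ref{lem:exact}, where the formula reduces to $\dot a_1!\dot a_2!\dot b_1!\dot b_2!/(n!\,\dot\cl_{1,1}!\dot\cl_{1,2}!\dot\cl_{2,1}!\dot\cl_{2,2}!)$, matching the second expression given there. Once the bijection is pinned down, the formula follows immediately; an even shorter route is an induction on the number of grid lines, refining one block at a time and applying the two-block case (Lemma \ref{lem:exact}, via Lemma \ref{lem:Delta} to handle interior rectangles), which reduces the whole statement to repeated use of the already-proved $2\times 2$ case.
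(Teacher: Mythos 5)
Your proposal is correct and takes essentially the same route as the paper: the paper's proof also counts in two stages, first distributing the $\dot a_i$ ones of each row-block $i$ (viewed there as basis vectors of type $i$) among the wide columns via the multinomial $\dot a_i!/\prod_j \dot\cl_{i,j}!$, then permuting freely within each wide column for a factor $\prod_j \dot b_j!$, and dividing by $n!$. The alternative arguments you sketch (contingency-table bijection, induction through the $2\times 2$ case of Lemma \ref{lem:exact}) are unnecessary extras; your main count is exactly the published one.
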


\begin{proof}
We consider the corresponding permutation matrix by columns:
these are the vectors $e_1, \ldots, e_n$,
and we have to choose their order.
There are $\dot a_i$ vectors of \emph{type $i$}:
$e_{a_{i-1}+1}, \ldots, e_{a_i}$.
The \emph{wide column} $j$
is the union of the $\dot b_j$ columns $b_{j-1}+1$ to $b_j$;
there are therefore $\mj$ wide columns.
We have to place $\dot \cl_{i,j}$
vectors of type $i$ in the wide column $j$.
Thus, for each $i$, we have to choose $\dot \cl_{i,j}$
vectors to go inside each wide column: there are
\( \left( {\prod_i \dot a_i!} \right)/
\left( {\prod_{i,j} \dot \cl_{i,j}!} \right) \)
ways of choosing.
Inside each wide column, vectors may be permuted,
obtaining the desired count of suitable permutation matrices.
\end{proof}

Rescale to introduce
\[ \hat a_i = \dot a_i/n \in \II, \quad
\hat b_j = \dot b_j/n \in \II, \quad
\hat \cl_{i,j} = \dot \cl_{i,j}/n \in \II, \]
\[ \mathcal{C}_a = \prod_i \hat a_i, \quad
\mathcal{C}_b = \prod_j \hat b_j, \quad
\mathcal{D}= \prod_{i,j} \hat \cl_{i,j} \]
and the factorial-free approximation
\[ \tilde P(\bg,\cl) =
\sqrt{\frac{\cC_a \cC_b}{(2\pi n)^{(\mi-1)(\mj-1)} \cD}} \;
\frac{ \left( \prod_i \hat a_i^{(\hat a_i n)} \right)
\left( \prod_j \hat b_j^{(\hat b_j n)} \right) }
{ \prod_{i,j} \hat \cl_{i,j}^{(\hat \cl_{i,j} n)} }, \]
or, equivalently, with $\xl(t) = t \log t$,
\begin{gather*}
\log(\tilde P(\bg,\cl)) = 
-\frac{(\mi-1)(\mj-1)}{2}\log\left(2\pi n\right)
- \frac{1}{2} \log\left(\frac{\cC_a \cC_b}{\cD}\right) \\
+ n \left( \sum_i \xl(\hat a_i) + \sum_j \xl(\hat b_j)
- \sum_{i,j} \xl(\hat \cl_{i,j}) \right). 
\end{gather*}

The following lemma generalizes Lemma \ref{lem:stirling}.
The \emph{sparsity} of a blocking $\bb = (\bg,\cl)$ is
$\lambda(\bb) = \min_{i,j} \dot \cl_{i,j}$.

\begin{lem}
\label{lem:morestirling}
Let $\mi$ and $\mj$ be fixed.
Then, when $\lambda(\bb) \to \infty$,
$P(\bb) \sim \tilde P(\bb)$.
\end{lem}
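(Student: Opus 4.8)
The plan is to imitate the proof of Lemma \ref{lem:stirling}, now applied to the exact count in Lemma \ref{lem:moreexact}: take logarithms and replace each factorial by Stirling's approximation (\ref{eq:stirling}). Because $\mi$ and $\mj$ are fixed, the factorials involved --- the $\mi$ terms $\dot a_i!$, the $\mj$ terms $\dot b_j!$, the single $n!$, and the $\mi\mj$ terms $\dot\cl_{i,j}!$ --- number $\mi+\mj+1+\mi\mj$, a quantity independent of $n$. The whole proof is therefore a bookkeeping exercise, organized according to the three kinds of terms produced by Stirling's formula.

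Before collecting terms I would record the identities $\sum_i \dot a_i = \sum_j \dot b_j = \sum_{i,j}\dot\cl_{i,j} = n$, equivalently $\sum_i\hat a_i = \sum_j\hat b_j = \sum_{i,j}\hat\cl_{i,j} = 1$, since these drive every cancellation. The linear contributions $-k$ vanish at once, as $-\sum_i\dot a_i - \sum_j\dot b_j + n + \sum_{i,j}\dot\cl_{i,j} = 0$. For the terms $\xl k$, substituting $\dot a_i = \hat a_i n$ and using $\xl(\hat a_i n) = n\,\xl(\hat a_i) + \hat a_i n\log n$ (and likewise for $\dot b_j$ and $\dot\cl_{i,j}$) produces exactly $n\big(\sum_i\xl\hat a_i + \sum_j\xl\hat b_j - \sum_{i,j}\xl\hat\cl_{i,j}\big)$, the stray $n\log n$ pieces (including the $-n\log n$ coming from $n!$) cancelling by the same summation identities. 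Finally the terms $\frac12\log k$, together with the constants $\frac12\log(2\pi)$, assemble into the square-root prefactor $\sqrt{\cC_a\cC_b/((2\pi n)^{(\mi-1)(\mj-1)}\cD)}$ of $\tilde P$; the one point to watch is that the exponent of $2\pi n$ comes out to $\mi\mj-\mi-\mj+1 = (\mi-1)(\mj-1)$. Summing these three contributions reproduces $\log\tilde P(\bb)$ exactly, up to the Stirling tails.

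It remains to control those tails $\st k$, and this is the only place the hypothesis $\lambda(\bb)\to\infty$ enters. I would observe, exactly as in Lemma \ref{lem:stirling}, that $\lambda(\bb) = \min_{i,j}\dot\cl_{i,j}$ is a lower bound for \emph{every} factorial argument: indeed $\dot a_i = \sum_j \dot\cl_{i,j} \ge \lambda(\bb)$, and likewise $\dot b_j \ge \lambda(\bb)$ and $n \ge \lambda(\bb)$. Since $0 < \st k < \frac{1}{12k} \le \frac{1}{12\lambda(\bb)}$ and there are only $\mi+\mj+1+\mi\mj$ tails, their total is at most a fixed multiple of $1/\lambda(\bb)$, hence tends to $0$. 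Therefore $\log P(\bb) - \log\tilde P(\bb) \to 0$, that is, $P(\bb) \sim \tilde P(\bb)$.

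The main obstacle is organizational rather than analytic: keeping the three families of terms separate and checking the cancellations, especially that the $n\log n$ contributions vanish and that the exponent of $2\pi n$ is precisely $(\mi-1)(\mj-1)$ --- this last being the one place where the planar grid, rather than the single blocking of Section \ref{sect:pointwise}, changes the combinatorics. Once the uniform lower bound $\lambda(\bb)$ on all factorial arguments is in hand, the tail estimate, and with it the conclusion, is immediate.
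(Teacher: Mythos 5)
Your proposal is correct and takes essentially the same route as the paper: apply Stirling's formula (Equation \ref{eq:stirling}) to each of the fixed number of factorials in Lemma \ref{lem:moreexact}, and use $\sum_i \hat a_i = \sum_j \hat b_j = \sum_{i,j} \hat \cl_{i,j} = 1$ to cancel the exponential factors and assemble the prefactor with exponent $(\mi-1)(\mj-1)$. The only difference is cosmetic: you work additively with logarithms and make explicit the tail bound $0 < \st k < \frac{1}{12\lambda(\bb)}$ via the observation $\dot a_i, \dot b_j, n \ge \lambda(\bb)$, a point the paper's one-line proof leaves implicit (as in Lemma \ref{lem:stirling}).
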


\begin{proof}
Use Stirling's formula on Lemma \ref{lem:moreexact}:
\begin{gather*}
\dot a_i! \sim (n/e)^{\hat a_i n} \hat a_i^{\hat a_i n} 
\sqrt{\hat a_i} \sqrt{2\pi n}, \quad
\dot b_j! \sim (n/e)^{\hat b_j n} \hat b_j^{\hat b_j n} 
\sqrt{\hat b_j} \sqrt{2\pi n}, \\
n! \sim (n/e)^n \sqrt{2\pi n}, \quad
\dot \cl_{i,j}! \sim (n/e)^{\hat \cl_{i,j} n} \hat \cl_{i,j}^{\hat \cl_{i,j} n} 
\sqrt{\hat \cl_{i,j}} \sqrt{2\pi n}. 
\end{gather*}
We have $\sum_i \hat a_i = \sum_j \hat b_j = \sum_{i,j} \hat \cl_{i,j} = 1$
and therefore
\[ \frac{ \left( \prod_i (n/e)^{\hat a_i n} \right)
\left( \prod_j (n/e)^{\hat b_j n} \right) }
{ (n/e)^n \left( \prod_{i,j} (n/e)^{\hat \cl_{i,j} n} \right) } = 1. \]
The result follows.
\end{proof}


At this point in Section \ref{sect:pointwise}
we introduce the variable $\hat \lh$ and random variable $\hat h$.
Here we have reason to introduce slightly different variables
$\tilde \lh_{i,j}$ (and corresponding variables $\tilde h_{i,j}$)
defined by
\begin{equation}
\label{eq:tildeh}
\hat \cl_{i,j} = \hat a_i \hat b_j +
\sqrt{\frac{\hat a_i \hat b_j}{n}}\; \tilde \lh_{i,j} =
\hat a_i \hat b_j \left( 1 +
\frac{\tilde \lh_{i,j}}{\sqrt{n \hat a_i \hat b_j}} \right).
\end{equation}
We relate a blocking $\bb = (\bg,\cl)$ with a \emph{normalized blocking}
$\tilde\bb = (\bg,\tilde \lh)$.
Notice that $\tilde \lh$ belongs to the vector subspace
$V_\bg \subset \RR^{\mi\mj}$ of dimension $(\mi - 1)(\mj - 1)$
defined by
\begin{equation}
\label{eq:Vg}
\sum_i \sqrt{\hat a_i}\, \tilde \lh_{i,j} = 0, \quad
\sum_j \sqrt{\hat b_j}\, \tilde \lh_{i,j} = 0.
\end{equation}
Furthermore, $\tilde \lh$ belongs to the lattice $L_{\bg} \subset V_{\bg}$
defined by
\begin{equation}
\label{eq:Lg}
\tilde \lh_{i,j} \in \frac{1}{\sqrt{n \hat a_i \hat b_j}}
\left( \ZZ - \frac{\dot a_i \dot b_j}{n} \right).
\end{equation}
Thus, the random variable $\tilde h$ assumes values in $L_{\bg}$.

\begin{lem}
\label{lem:morelogQ}
\begin{align*}
\log(\tilde P(\bg,\tilde \lh)) &=
\frac{1}{2} \log\left(\frac{\cC_a \cC_b}{\cD}\right)
- \frac{(\mi-1)(\mj-1)}{2} \log(2\pi n) \\
&\phantom{=} - n \sum_{i,j} \hat a_i \hat b_j
\xl\left( 1 + \frac{\tilde \lh_{i,j}}{\sqrt{n\,\hat a_i \hat b_j}} \right). 
\end{align*}
\end{lem}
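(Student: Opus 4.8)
The plan is to follow the proof of Lemma \ref{lem:logQ} almost verbatim, the only new feature being that the cancellation of the terms linear in $\tilde \lh$ is now driven by the subspace relations \eqref{eq:Vg} rather than by the row/column sums \eqref{eq:sumh}. Taking the logarithm of the square-root prefactor in the definition of $\tilde P(\bg,\cl)$ immediately produces the two terms $\frac12\log(\cC_a\cC_b/\cD)$ and $-\frac{(\mi-1)(\mj-1)}{2}\log(2\pi n)$, so the entire task reduces to the identity
\[ A := \sum_{i,j}\xl \hat \cl_{i,j} - \sum_i \xl \hat a_i - \sum_j \xl \hat b_j = \sum_{i,j}\hat a_i \hat b_j \, \xl\!\left(1 + \frac{\tilde \lh_{i,j}}{\sqrt{n\,\hat a_i \hat b_j}}\right). \]

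First I would substitute the factorization $\hat \cl_{i,j} = \hat a_i \hat b_j\bigl(1 + t_{i,j}\bigr)$ from \eqref{eq:tildeh}, with $t_{i,j} = \tilde \lh_{i,j}/\sqrt{n\,\hat a_i \hat b_j}$, so that $\log\hat \cl_{i,j} = \log\hat a_i + \log\hat b_j + \log(1+t_{i,j})$. Multiplying out $\xl \hat \cl_{i,j} = \hat \cl_{i,j}\log\hat \cl_{i,j}$ splits each summand into five pieces: the diagonal pieces $\hat b_j \xl \hat a_i$ and $\hat a_i \xl \hat b_j$; the two cross pieces $\sqrt{\hat a_i \hat b_j/n}\,\tilde \lh_{i,j}\log\hat a_i$ and $\sqrt{\hat a_i \hat b_j/n}\,\tilde \lh_{i,j}\log\hat b_j$; and the tail $\hat a_i \hat b_j\,\xl(1+t_{i,j})$.

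Next I would sum over $i,j$ and simplify group by group. The relations $\sum_i \hat a_i = \sum_j \hat b_j = 1$ collapse the two diagonal sums to $\sum_i \xl\hat a_i$ and $\sum_j \xl \hat b_j$, which cancel the subtracted terms in $A$. The key step is the two cross sums: pulling $\log\hat a_i$ out of the first leaves the inner factor $\sum_j \sqrt{\hat b_j}\,\tilde \lh_{i,j}$, and pulling $\log\hat b_j$ out of the second leaves $\sum_i \sqrt{\hat a_i}\,\tilde \lh_{i,j}$; both vanish identically by \eqref{eq:Vg}. Only the tail survives, giving $A = \sum_{i,j}\hat a_i \hat b_j\,\xl(1+t_{i,j})$ and hence the lemma.

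There is no serious obstacle here; the whole point is that the weighting $\sqrt{\hat a_i \hat b_j/n}$ built into \eqref{eq:tildeh} was chosen precisely so that the linear terms reassemble into the weighted sums appearing in \eqref{eq:Vg}. The only thing that needs care is keeping these $\sqrt{\hat a_i \hat b_j}$ weights straight, since in the grid setting they are what replace the naive unweighted sums $\sum_j \hat \lh_{i,j} = 0$ of \eqref{eq:sumh} used in the $\mi=\mj=2$ case.
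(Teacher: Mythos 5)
Your proposal is correct and is exactly the paper's proof: the paper disposes of this lemma with ``Up to notation, like the proof of Lemma \ref{lem:logQ}'', and your writeup is precisely that adaptation, correctly identifying that the linear terms now cancel via the weighted relations in Equation \ref{eq:Vg} (thanks to the $\sqrt{\hat a_i \hat b_j/n}$ scaling built into Equation \ref{eq:tildeh}) rather than via Equation \ref{eq:sumh}.
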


\begin{proof}
Up to notation, like the proof of Lemma \ref{lem:logQ}.
\end{proof}

We now extend Lemma \ref{lem:prenormal} to grids.
As in Section \ref{sect:pointwise},
let $\alpha \in (\frac{11}{12},1)$; a grid $\bg$ is
\textit{$\alpha$-regular} if
\[ \forall (i,j) \; \dot a_i, \dot b_j > n^\alpha. \]
Consider the open triangle 
\begin{equation}
\label{eq:Delta2}
\Delta_2 = \{ (\alpha,\eta) \in \RR^2 \;\mid\;
0 < 12 \eta < 12\alpha - 11 < 1 \} \subset \Delta_1.
\end{equation}
For $(\alpha,\eta) \in \Delta_2$,
a normalized blocking $\tilde\bb = (\bg,\tilde \lh)$
is \textit{$(\alpha,\eta)$-standard}
(denoted by $\tilde\bb \in \Xi_{\alpha,\eta}$)
if $\bg$ is $\alpha$-regular, $\tilde \lh \in L_{\bg}$, and
\[ \forall (i,j) \; | \tilde \lh_{i,j} | < n^\eta. \]

\begin{lem}
\label{lem:moreprenormal}
Fix $\mi$, $\mj$ and $(\alpha,\eta) \in \Delta_2$.
If we restrict ourselves to $\Xi_{\alpha,\eta}$ then,
as $\lambda(\bb)$ goes to infinity,
\[ P(\bg,\tilde \lh) \sim \frac{1}{%
\sqrt{(2\pi n)^{(\mi-1)(\mj-1)} \cC_a^{(\mj-1)} \cC_b^{(\mi-1)}}}\;
\exp\left( - \frac{|\tilde \lh|^2}{2} \right).
\]
\end{lem}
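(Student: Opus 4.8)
The plan is to mirror the proof of Lemma \ref{lem:prenormal}, the one genuinely new feature being that the variables $\tilde \lh_{i,j}$ of equation \ref{eq:tildeh} are tailored so that the quadratic Taylor term collapses directly to the Euclidean norm $|\tilde \lh|^2$ on $V_\bg$. First I would reduce everything to the factorial-free approximation. Writing $t_{i,j} = \tilde \lh_{i,j}/\sqrt{n\,\hat a_i \hat b_j}$, the $\alpha$-regularity of $\bg$ gives $\hat a_i,\hat b_j > n^{\alpha-1}$, hence $\sqrt{n\,\hat a_i\hat b_j} > n^{\alpha - 1/2}$ and $|t_{i,j}| < n^{\eta - \alpha + 1/2}$, which is $o(1)$ because $\eta < \alpha - \frac{11}{12} < \alpha - \frac12$ on $\Delta_2$. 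Thus $\hat \cl_{i,j} = \hat a_i \hat b_j(1+o(1))$, so $\lambda(\bb)\to\infty$ (the smallest box holds $\approx n\,\hat a_i\hat b_j > n^{2\alpha-1}$ ones), and Lemma \ref{lem:morestirling} yields $P(\bg,\tilde \lh) \sim \tilde P(\bg,\tilde \lh)$; it suffices to estimate $\tilde P$. Counting $\mj$ copies of each $\hat a_i$ and $\mi$ copies of each $\hat b_j$, the same approximation gives
\[ \cD = \prod_{i,j}\hat \cl_{i,j} \sim \prod_{i,j}\hat a_i\hat b_j = \cC_a^{\mj}\,\cC_b^{\mi}, \]
so the prefactor $\frac12 \log\frac{\cC_a\cC_b}{\cD}$ of Lemma \ref{lem:morelogQ} is asymptotic to $-\frac12 \log\big(\cC_a^{\mj-1}\cC_b^{\mi-1}\big)$, which will supply the $\cC_a^{(\mj-1)}\cC_b^{(\mi-1)}$ factor in the claim.

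The heart of the matter is the term $nA = n\sum_{i,j}\hat a_i\hat b_j\,\xl(1+t_{i,j})$. Expanding $\xl(1+t) = t + \frac{t^2}{2} + O(t^3)$ as in equation \ref{eq:taylorxl}, and using $\hat a_i\hat b_j\,t_{i,j} = \sqrt{\hat a_i\hat b_j/n}\,\tilde \lh_{i,j}$ and $\hat a_i\hat b_j\,t_{i,j}^2 = \tilde \lh_{i,j}^2/n$, I would write
\[ nA = \sqrt{n}\sum_{i,j}\sqrt{\hat a_i}\sqrt{\hat b_j}\,\tilde \lh_{i,j} + \frac12 \sum_{i,j}\tilde \lh_{i,j}^2 + O\!\left( n^{-1/2}\sum_{i,j}\frac{|\tilde \lh_{i,j}|^3}{(\hat a_i\hat b_j)^{1/2}}\right). \]
The linear sum vanishes: the constraints \ref{eq:Vg} give $\sum_i \sqrt{\hat a_i}\,\tilde \lh_{i,j}=0$ for each $j$, so summing against $\sqrt{\hat b_j}$ kills it. The quadratic sum is exactly $\frac12|\tilde \lh|^2$, the squared Euclidean norm on $V_\bg$; this clean identity is precisely the payoff of the normalization \ref{eq:tildeh}.

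The one point demanding care — and the main obstacle — is the cubic remainder. Since $|\tilde \lh_{i,j}| < n^\eta$ and $(\hat a_i\hat b_j)^{1/2} > n^{\alpha-1}$, each of the (fixedly many) summands is $O\big(n^{-1/2}\,n^{3\eta}\,n^{1-\alpha}\big) = O\big(n^{1/2 + 3\eta - \alpha}\big)$, so I must verify $\frac12 + 3\eta - \alpha < 0$, i.e. $\eta < \frac{\alpha}{3} - \frac16$. This is guaranteed by the $\Delta_2$ bound $\eta < \alpha - \frac{11}{12}$, since $\alpha < 1$ forces $\alpha - \frac{11}{12} \le \frac{\alpha}{3} - \frac16$; hence the remainder is $o(1)$ uniformly on $\Xi_{\alpha,\eta}$ and $nA = \frac12|\tilde \lh|^2 + o(1)$. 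Substituting the prefactor and this value of $nA$ into Lemma \ref{lem:morelogQ} and exponentiating then gives exactly
\[ P(\bg,\tilde \lh) \sim \frac{1}{\sqrt{(2\pi n)^{(\mi-1)(\mj-1)}\,\cC_a^{(\mj-1)}\,\cC_b^{(\mi-1)}}}\;\exp\!\left(-\frac{|\tilde \lh|^2}{2}\right), \]
completing the argument.
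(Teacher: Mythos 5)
Your proposal is correct and follows essentially the same route as the paper's proof: reduce to the factorial-free approximation via Lemma \ref{lem:morestirling}, show $\cD \sim \cC_a^{\mj}\cC_b^{\mi}$ to handle the prefactor, Taylor-expand $\xl(1+t)$ so the linear term dies by the constraints (\ref{eq:Vg}) defining $V_\bg$ and the quadratic term gives $\frac12|\tilde\lh|^2$, then bound the cubic remainder. Your exponent bound $n^{\frac12+3\eta-\alpha} = o(1)$ is exactly the paper's $n^{3\eta-\alpha+\frac12} = O(n^{-\frac14})$ estimate, so there is no substantive difference.
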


Here $|\tilde\lh|^2 = \sum_{i,j} \tilde \lh_{i,j}^2$.
The notation $\sim$ here means the same as discussed in detail
between the statement and proof of Lemma \ref{lem:prenormal}.
Notice that when $\lambda(\bb)$ goes to infinity, so does $n$.
We shall prove in Lemma \ref{lem:goodh} below
that, for $\alpha$-regular grids,
the condition $|\tilde h_{i,j}| < n^\eta$ holds
with probability $1 - o(1)$ (when $n$ goes to infinity).

\begin{proof}
Notice that the conditions imply
\[ \frac{\tilde \lh_{i,j}}{\sqrt{n \hat a_i \hat b_j}}
< n^{\eta - \alpha + \frac12}  = O(n^{-\frac{5}{12}});
\quad
\frac{\tilde \lh_{i,j}^3}{\sqrt{n \hat a_i \hat b_j}}
< n^{3\eta - \alpha + \frac12} = O(n^{-\frac{1}{4}}). \]
Let
\[
\tilde P_0(\bg,\cl) = 
\sqrt{\frac{\cC_a \cC_b}{(2\pi n)^{(\mi-1)(\mj-1)} \cD}}, \quad
\tilde P_1(\bg,\cl) = \frac{ \left( \prod_i \hat a_i^{(\hat a_i n)} \right)
\left( \prod_j \hat b_j^{(\hat b_j n)} \right) }
{ \prod_{i,j} \hat \cl_{i,j}^{(\hat \cl_{i,j} n)} }. 
\]
Lemma \ref{lem:morestirling} gives the uniform estimate
$P(\bg,\cl) \sim \tilde P_0(\bg,\cl) \tilde P_1(\bg,\cl)$. 

We first estimate $\tilde P_0$.
As before,
\[ \hat \cl_{i,j} 
= \hat a_i\hat b_j
\left(1 + \frac{\tilde \lh_{i,j}}{\sqrt{n\hat a_i\hat b_j}}\right)
= \hat a_i\hat b_j\left(1 + O(n^{-\frac{5}{12}}) \right). \]
We have that $\hat \cl_{i,j} \sim \hat a_i\hat b_j$ (uniformly) and therefore
\[ \cD = \prod_{i,j} \hat \cl_{i,j} \sim \prod_{i,j} (\hat a_i \hat b_j) =
\left( \prod_i \hat a_i \right)^{\mj}
\left( \prod_j \hat b_j \right)^{\mi}
= \cC_a^{\mj} \cC_b^{\mi}. \]
Thus
\[  \tilde P_0(\bg,\cl) \sim \frac{1}{%
\sqrt{(2\pi n)^{(\mi-1)(\mj-1)} \cC_a^{(\mj-1)} \cC_b^{(\mi-1)}}}. \]
We now consider $\tilde P_1$.
From Lemma \ref{lem:morelogQ},
using the Taylor approximation for $\xl(1+t)$
(Equation \ref{eq:taylorxl}),
\begin{align*}
\log\left( \tilde P_1(\bg,\cl) \right) &= 
- \sum_{i,j} (n \hat a_i \hat b_j)
\xl\left(1+\frac{\tilde \lh_{i,j}}{\sqrt{n \hat a_i \hat b_j}} \right) \\
&= -\sqrt{n} \sum_{i,j} \sqrt{\hat a_i \hat b_j} \tilde \lh_{i,j} 
- \sum_{i,j} \frac{\tilde \lh_{i,j}^2}{2} + O(n^{-\frac14})
\end{align*}
or, since $\sum_{i,j} \sqrt{\hat a_i \hat b_j} \tilde \lh_{i,j} =
\sum_i \sqrt{\hat a_i} \left( \sum_j \sqrt{\hat b_j} \tilde \lh_{i,j} \right)
= 0$, 
\[ \tilde P_1(\bg,\cl) \sim \exp\left(
- \sum_{i,j} \frac{\tilde \lh_{i,j}^2}{2} 
\right), \]
completing the proof.
\end{proof}

\begin{lem}
\label{lem:goodh}
Fix $\mi$, $\mj$ and $(\alpha,\eta) \in \Delta_2$.
Then there exists $n_0$ such that,
if $n > n_0$ and if $\bg$ is an $\alpha$-regular $\mi \times \mj$ grid,
then
\[ \Pr\left[\forall(i,j)\; |\tilde h_{i,j}| < n^\eta \right] >
1 - \frac{\mi \mj}{n^\eta} \exp\left( - \frac{n^{2\eta}}{2} \right)
= 1 - o(1). \]
\end{lem}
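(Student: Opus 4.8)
The plan is to bound the complementary probability $\Pr[\exists (i,j):\ |\tilde h_{i,j}| \ge n^\eta]$ by a union bound over the $\mi\mj$ boxes, reducing everything to a single tail estimate for one $\tilde h_{i,j}$, which is exactly what Lemma \ref{lem:ineq} provides. So first I would fix a box $(i,j)$ and identify the law of the number $\dot \cl_{i,j}$ of $1$'s it contains. By Lemma \ref{lem:Delta}, $\dot \cl_{i,j}$ has the same distribution as $X_{n,\dot a_i,\dot b_j}$, the count in the upper-left $\dot a_i\times\dot b_j$ submatrix; consequently $\dot \cl_{i,j}-\dot a_i\dot b_j/n$ has the law of $Y_{n,\dot a_i,\dot b_j}$. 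This is what lets me apply the pointwise estimates of Section \ref{sect:pointwise} to each box individually, with the global grid $\bg=(n,a,b)$ of Section \ref{sect:pointwise} replaced by $(n,\dot a_i,\dot b_j)$.

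Second, I would translate the event $\{|\tilde h_{i,j}|\ge n^\eta\}$ into the language of Lemma \ref{lem:ineq}. From (\ref{eq:tildeh}) one has $\tilde h_{i,j} = (\dot\cl_{i,j}-\dot a_i\dot b_j/n)\big/\sqrt{\dot a_i\dot b_j/n}$, whereas the natural scale in Lemma \ref{lem:ineq} is $\sqrt{\cC n}$ with $\cC = \hat a'(1-\hat a')\hat b'(1-\hat b')$, where $\hat a' = \dot a_i/n$ and $\hat b'=\dot b_j/n$. The two normalizations differ precisely by the factor $\sqrt{(1-\hat a')(1-\hat b')}$, which is strictly less than $1$; hence $|\tilde h_{i,j}| \ge n^\eta$ forces $|Y_{n,\dot a_i,\dot b_j}| > n^\eta\sqrt{\cC n}$. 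This observation is the crux, since it lets me take $H = n^\eta$ exactly, which for large $n$ lies in the admissible range $(1,2n^\eta)$ demanded by Lemma \ref{lem:ineq}.

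Third, I would verify the remaining hypotheses of Lemma \ref{lem:ineq} for $(n,\dot a_i,\dot b_j)$. Since $\bg$ is $\alpha$-regular we have $\dot a_i,\dot b_j > n^\alpha$; and assuming $\mi,\mj\ge 2$, each $\dot a_i \le n-\dot a_{i'} < n-n^\alpha$ for some other index $i'$ (likewise for $\dot b_j$), so $(n,\dot a_i,\dot b_j)$ is $\alpha$-regular in the sense of Section \ref{sect:pointwise}; moreover $(\alpha,\eta)\in\Delta_2\subset\Delta_1$ and $\alpha>\tfrac{11}{12}>\tfrac78$, so the lemma indeed applies. Invoking it with $H=n^\eta$ gives $\Pr[|\tilde h_{i,j}|\ge n^\eta] \le \tfrac{1}{n^\eta}\exp(-n^{2\eta}/2)$. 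Summing over the $\mi\mj$ boxes and passing to the complement yields the stated bound, which is $1-o(1)$ because $\mi,\mj$ are fixed. The degenerate cases $\mi=1$ or $\mj=1$ are immediate, since then $V_\bg=\{0\}$ and every $\tilde h_{i,j}$ vanishes identically.

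The main obstacle is the second step: recognizing that although $\tilde h_{i,j}$ is normalized by $\sqrt{\dot a_i\dot b_j/n}$ rather than by its true standard deviation $\sqrt{\cC n}$, the discrepancy is a factor bounded by $1$, so the sharp subgaussian tail of Lemma \ref{lem:ineq} can still be invoked with $H=n^\eta$ landing in range and producing precisely the exponent $-n^{2\eta}/2$. The rest — the distributional identification via Lemma \ref{lem:Delta}, the $\alpha$-regularity transfer, and the union bound — is routine.
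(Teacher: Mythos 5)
Your proof is correct and follows essentially the same route as the paper's: apply Lemma \ref{lem:ineq} with $H=n^\eta$ to each box, exploiting the comparison $\cC=\hat a_i(1-\hat a_i)\hat b_j(1-\hat b_j)\le \hat a_i\hat b_j$ between the two normalizations, and finish with a union bound over the $\mi\mj$ boxes. You merely make explicit some details the paper leaves implicit — the reduction of each box to an upper-left corner via Lemma \ref{lem:Delta}, the transfer of $\alpha$-regularity to $(n,\dot a_i,\dot b_j)$, and the degenerate cases $\mi=1$ or $\mj=1$.
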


\begin{proof}
The idea is to use Lemma \ref{lem:ineq} with $H = n^\eta$.
Fix $i$ and $j$.
Notice that
\[ \tilde h_{i,j} = \frac{Y_{n,a_i,b_j}}{\sqrt{n\,\hat a_i\,\hat b_j}};
\quad \cC = \hat a_i(1-\hat a_i)\hat b_j(1-\hat b_j). \]
We have
\begin{align*}
\Pr\left[ |\tilde h_{i,j}| > n^\eta \right] &=
\Pr\left[ |Y_{n,a_i,b_j}| > H \sqrt{n\,\hat a_i\,\hat b_j} \right] \\
& \le \Pr\left[ |Y_{n,a_i,b_j}| > H \sqrt{\cC\,n} \right] <
\frac{1}{H} \exp\left( - \frac{H^2}{2} \right).
\end{align*}
Adding up for the $IJ$ pairs we complete the proof.
\end{proof}

\begin{proof}[Proof of Theorem \ref{theo:joint}]
Fix $(\alpha,\eta) \in \Delta_2$.
By projection onto an appropriate subspace,
if Theorem \ref{theo:joint} holds for
the finite family $(u_1,v_1),\ldots,(u_m,v_m)$
it also holds for any subfamily.
Assume therefore without loss that the family forms
a grid $\bg_0$ of points $(u_i,v_j)$, with $i < \mi$, $j < \mj$.
Write $u_0 = v_0 = 0$, $u_{\mi} = v_{\mj} = 1$.

For a positive integer $n$, set
$a_i = \lfloor n u_i \rfloor$ and
$b_j = \lfloor n v_j \rfloor$.
For large $n$, this defines an $\alpha$-regular grid $\bg$ with
$\hat a_i \approx u_i - u_{i-1}$ and $\hat b_j \approx v_j - v_{j-1}$.
Thus $V_{\bg} \approx V_{\bg_0}$.
The bridged Brownian sheet defines a normal distribution on $V_{\bg_0}$
as in Equation \ref{eq:jointbrown}.
Lemma \ref{lem:goodh} shows that, with probability $1 - o(1)$,
a random permutation obtains an $(\alpha,\eta)$-standard
normalized blocking $(\bg,\tilde h)$.
From Lemma \ref{lem:moreprenormal},
such $\tilde h$ follow a normal distribution in the lattice
$L_{\bg} \subset V_{\bg}$.
Since these lattices get finer when $n$ goes to infinity 
it follows that the distributions of $\sqrt{n}\,\by_n$
converge to that of $f$, completing the proof.
\end{proof}

\section{H\"older estimates}
\label{sect:holder}

Theorem \ref{theo:discrete} claims that $\by_n$ approaches
a bridged Brownian sheet. As is well known, Brownian sheets
are (almost surely) H\"older continuous for any exponent
smaller than $1/2$ (bridged or not).
The following lemma is a formal version of this fact
and will be needed in the proof of Theorem \ref{theo:discrete}.
We use the following technical definition:
given $r, \delta, C > 0$,
a continuous function $g: \II^2 \to \RR$ is
$(r,\delta,C)$-H\"older if,
for all $(u_0,v_0), (u_1,v_1) \in \II^2$,
\[ |u_0 - u_1|, |v_0 - v_1|  < r \; \Rightarrow \;
|f(u_0,v_0) - f(u_1,v_1)| <
C ( |u_0 - u_1| + |v_0 - v_1| )^{\frac12 - \delta}.
\]

\begin{lem}
\label{lem:holderbrownsheet}
Let $f: \II^2 \to \RR$ be a bridged Brownian sheet.
Fix $\delta, C > 0$.
Let $p_r$ be the probability that $f$ be $(r,\delta,C)$-H\"older.
Then
\[ \lim_{r\searrow 0} p_r = 1. \]
\end{lem}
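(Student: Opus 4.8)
The plan is to establish the Hölder-continuity estimate through a Kolmogorov–Chentsov type argument adapted to the two-dimensional parameter space, exploiting the explicit Gaussian structure of the bridged Brownian sheet. The key is that for a centered Gaussian process, control of second moments of increments immediately controls all moments, which is exactly what the classical continuity criterion requires. So the first step would be to compute the variance of an increment $f(u_0,v_0) - f(u_1,v_1)$ and show it is bounded by a constant times $|u_0-u_1| + |v_0-v_1|$. Using Equation \ref{eq:ffromfast}, the bridged sheet $f$ is an explicit affine-in-$(u,v)$ combination of the plain sheet $f_\ast$ evaluated at a bounded number of points; since $f_\ast$ is built by integrating white noise over rectangles, increments of $f_\ast$ have variance equal to the area of the symmetric difference of the corresponding rectangles, which is $O(|u_0-u_1| + |v_0-v_1|)$. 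The bridging correction terms are each products of coordinate differences with sheet values at boundary points, so they contribute increments of the same or smaller order. Hence $\operatorname{Var}\big(f(u_0,v_0)-f(u_1,v_1)\big) \le K\big(|u_0-u_1| + |v_0-v_1|\big)$ for a universal $K$.

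**Next I would** convert this second-moment bound into the standard Kolmogorov–Chentsov hypothesis. Because $f(u_0,v_0)-f(u_1,v_1)$ is Gaussian with mean zero, for every even integer $p$ we get
\[
E\big[ |f(u_0,v_0)-f(u_1,v_1)|^p \big]
\le c_p \big(|u_0-u_1| + |v_0-v_1|\big)^{p/2},
\]
where $c_p$ depends only on $p$ and $K$. The two-dimensional Kolmogorov continuity theorem then states that for any $\gamma < \frac{p/2 - 2}{p}$ the process has a modification that is almost surely $\gamma$-Hölder on $\II^2$ (the $2$ in the numerator is the dimension of the parameter space). Taking $p$ large makes $\gamma$ approach $\frac12$, so for the fixed target exponent $\frac12 - \delta$ I would pick a single $p$ with $\frac{p/2-2}{p} > \frac12 - \delta$, giving an almost surely finite Hölder seminorm
\[
M = \sup_{(u_0,v_0)\neq(u_1,v_1)}
\frac{|f(u_0,v_0)-f(u_1,v_1)|}{\big(|u_0-u_1|+|v_0-v_1|\big)^{\frac12-\delta}}.
\]

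**Finally,** the passage to the stated conclusion is a soft measure-theoretic step. Define the event $E_r$ that $f$ is $(r,\delta,C)$-Hölder. On the event $\{M \le C\}$, the global bound gives the local one for every $r$, so $\{M \le C\} \subseteq E_r$; more usefully, for fixed $C$ the events $E_r$ increase as $r \searrow 0$ does not hold directly, but the complementary events $E_r^c$ decrease, and $\bigcap_{r>0} E_r^c$ is contained in the event that there exist pairs arbitrarily close together violating the bound, which has probability zero by the finiteness of $M$ for any $\delta' < \delta$ (choosing a slightly better exponent gives slack). Concretely, since $M < \infty$ almost surely, for almost every realization there is an $r > 0$ (depending on the path) making $f$ locally $(r,\delta,C)$-Hölder once $r$ is small enough that the increment bound with exponent $\frac12-\delta$ and constant $C$ is enforced; monotone convergence of $\Pr[E_r]$ then yields $\lim_{r \searrow 0} p_r = 1$.

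**The main obstacle** I anticipate is the bridging correction, not the sheet itself: one must verify that the affine terms $u f_\ast(1,v)$, $v f_\ast(u,1)$, and $uv f_\ast(1,1)$ in Equation \ref{eq:ffromfast} do not degrade the Hölder exponent. Each is a product of a smooth (Lipschitz) factor with a Gaussian factor, so the increment of such a product splits, via the identity $XY - X'Y' = (X-X')Y + X'(Y-Y')$, into one piece controlled by the Lipschitz increment (order $|u_0-u_1|+|v_0-v_1|$, hence better than the $\frac12$ exponent) and one piece controlled by a sheet increment along a single edge of the square (again order $\sqrt{|u_0-u_1|+|v_0-v_1|}$). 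Keeping these moment bounds uniform, so that the constant $c_p$ truly does not depend on the location of the points, is the one place requiring genuine care; everything downstream is the standard Kolmogorov machinery.
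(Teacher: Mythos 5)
Your proof is correct and follows exactly the route the paper has in mind: the paper omits the proof of this lemma entirely, stating only that it is ``analogous to the well known case of Brownian motion,'' and your Kolmogorov--Chentsov argument --- increment variance bounded by $K\left(|u_0-u_1|+|v_0-v_1|\right)$ via symmetric differences of rectangles plus the splitting of the bridging terms, Gaussian moment equivalence, then the continuity theorem in parameter dimension $2$ with $p>2/\delta$ --- is precisely that standard argument spelled out. The one point of genuine care, the fixed constant $C$, you handle correctly by running the machinery at a slightly better exponent $\tfrac12-\delta'$ with $\delta'<\delta$ so that $M'(2r)^{\delta-\delta'}<C$ for all sufficiently small $r$ along almost every path (and note that the monotonicity you hedge on does in fact hold directly: shrinking $r$ weakens the $(r,\delta,C)$-H\"older condition, so the events increase as $r\searrow 0$ and $p_r$ converges to the probability of their union, which is $1$).
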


The proof is analogous to the well known
case of Brownian motion and is omitted.

Lemma \ref{lem:holder} below obtains a related estimate for $Y$,
to be used in the proof of Theorem \ref{theo:discrete}.
A discrete copula $C$ is \emph{$(\alpha,\epsilon)$-H\"older} if
\[ \forall (a_1, a_2, b_1, b_2),\;
\left( |a_2 - a_1|, |b_2 - b_1| < n^\alpha
\; \Rightarrow \;
\left| D_{a_2,b_2} - D_{a_1,b_1} \right|
< n^{\frac{\alpha}{2} +\epsilon} \right) \]
(where, as usual, $D_{a,b} = C_{a,b} - \frac{ab}{n}$).

\begin{lem}
\label{lem:holder}
Let $\alpha \in (\frac78,1)$ and $\epsilon > 0$ be fixed.
Then, a.a.s., $X$ is $(\alpha,\epsilon)$-H\"older.
\end{lem}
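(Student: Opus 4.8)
The plan is to prove this Hölder estimate by combining the pointwise tail bound of Lemma~\ref{lem:ineq} with a union bound over a suitably chosen finite collection of point pairs, exploiting the fact that $D$ changes by at most a controlled amount between neighboring lattice points. The key tension is that the statement quantifies over \emph{all} quadruples $(a_1,a_2,b_1,b_2)$ with $|a_2-a_1|,|b_2-b_1| < n^\alpha$, of which there are polynomially many (of order $n^4$); I must show the failure probability for each is small enough that the union bound still tends to $0$.

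First I would reduce the increment $D_{a_2,b_2} - D_{a_1,b_1}$ to a quantity I can bound probabilistically. The natural move is to write the difference of $D$ over the rectangle with corners $(a_i,b_j)$ as a sum of box-counts: specifically, $D_{a_2,b_2} - D_{a_2,b_1} - D_{a_1,b_2} + D_{a_1,b_1}$ equals the number of $1$'s in the rectangle $[a_1,a_2]\times[b_1,b_2]$ minus its expected value $\frac{(a_2-a_1)(b_2-b_1)}{n}$. By Lemma~\ref{lem:Delta}, this rectangular increment has exactly the distribution $P(n, a_2-a_1, b_2-b_1, \cdot)$, so Lemma~\ref{lem:ineq} applies directly with the grid dimensions $(a_2-a_1), (b_2-b_1)$. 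However, the Hölder condition concerns the difference at two \emph{points} $D_{a_2,b_2} - D_{a_1,b_1}$, not the rectangular increment, so I would first decompose this single-increment into a sum of rectangular increments along a staircase path from $(a_1,b_1)$ to $(a_2,b_2)$, and bound each piece. Here $|a_2-a_1|, |b_2-b_1| < n^\alpha$ guarantees each rectangle has side lengths below $n^\alpha$, and Lemma~\ref{lem:ineq} gives that each rectangular increment exceeds $H\sqrt{\cC n}$ with probability at most $\frac{1}{H}\exp(-H^2/2)$, where now $\cC \approx \frac{(a_2-a_1)(b_2-b_1)}{n^2} \lesssim n^{2\alpha-2}$, so $\sqrt{\cC n} \lesssim n^{\alpha - 1/2}$.

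The quantitative heart is then the choice of $H$. I want to bound each rectangular increment by roughly $n^{\alpha/2 + \epsilon}$, which means setting $H \approx n^{\epsilon}$ so that $H\sqrt{\cC n} \approx n^{\epsilon} \cdot n^{\alpha/2 - 1/2} \cdot \sqrt{n}$... more carefully, since $\cC \approx n^{2\alpha - 2}$ gives $\sqrt{\cC n} \approx n^{\alpha - 1/2}$, I need $H \approx n^{\epsilon + (1-\alpha)/2}$ to reach the target $n^{\alpha/2 + \epsilon}$ — and I must check $H \in (1, 2n^\eta)$ for an admissible $\eta$, which constrains how small $\alpha$ and $\epsilon$ may be but is satisfiable in the stated range $\alpha \in (\frac78,1)$. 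With this $H$, each failure probability is at most $\frac{1}{H}\exp(-H^2/2)$, and since $H \to \infty$ like a positive power of $n$, the factor $\exp(-H^2/2)$ decays faster than any polynomial. Multiplying by the number of quadruples (order $n^4$) and by the number of staircase pieces still leaves a quantity tending to $0$, which delivers the \emph{asymptotically almost sure} conclusion.

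\textbf{The main obstacle} I anticipate is making the staircase decomposition both legitimate and efficient: I must ensure the path from $(a_1,b_1)$ to $(a_2,b_2)$ uses only rectangles whose dimensions stay below $n^\alpha$ and whose individual increments, when summed, control the total $|D_{a_2,b_2} - D_{a_1,b_1}|$ without the number of pieces growing so large as to overwhelm the super-polynomial decay of the tail bound. A clean route is to use just two rectangles (splitting the difference through the intermediate corner $(a_2,b_1)$ or $(a_1,b_2)$), each of which is covered by Lemma~\ref{lem:ineq} after applying Lemma~\ref{lem:Delta}; the triangle inequality then bounds the total by twice the single-piece bound, at the cost of only a constant factor. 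The delicate bookkeeping lies in verifying that the exponent arithmetic closes — that the chosen $H$ simultaneously respects the admissibility window $(1, 2n^\eta)$ of Lemma~\ref{lem:ineq} for some $(\alpha,\eta) \in \Delta_1$ and yields the target bound $n^{\alpha/2 + \epsilon}$ — and confirming that the union bound over all quadruples indeed vanishes.
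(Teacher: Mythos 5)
Your overall skeleton is the paper's: split $D_{a_2,b_2}-D_{a_1,b_1}$ through the corner $(a_2,b_1)$ into two one-coordinate increments, translate each increment into an upper-left count via Lemma~\ref{lem:Delta}, bound its tail with Lemma~\ref{lem:ineq} taking $H$ a small positive power of $n$, and finish with a union bound over polynomially many tuples against a super-polynomially small tail. But there is a genuine gap at the step you describe as applying Lemma~\ref{lem:ineq} ``directly with the grid dimensions $(a_2-a_1),(b_2-b_1)$'': Lemma~\ref{lem:ineq} requires the grid $(n,a,b)$ to be $\alpha$-regular, i.e.\ $n^\alpha < a,b < n-n^\alpha$, and after the Lemma~\ref{lem:Delta} reduction the relevant dimension is exactly $a_2-a_1 < n^\alpha$ (it can be as small as $1$), so the hypothesis fails for \emph{every} increment you need to control; moreover the untouched coordinate $b$ may itself lie within $n^\alpha$ of $0$ or $n$. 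The paper's proof spends most of its effort repairing precisely this: it pads the short side, setting $a_3 = a_2-a_1+\lfloor n^\alpha\rfloor+1 \in (n^\alpha, 2n^\alpha)$, applies Lemma~\ref{lem:ineq} to the two regular grids with first coordinates $a_3$ and $a_3-(a_2-a_1) = \lfloor n^\alpha\rfloor +1$, and recovers $|Y_{a_2-a_1,b}|$ by Lemma~\ref{lem:Delta} and the triangle inequality; a second shift $b_3 = b + \lfloor n^\alpha\rfloor + 1$ handles $b \le n^\alpha$ (and symmetrically $b \ge n-n^\alpha$). Without some such monotonicity/padding device your argument does not go through, because the pointwise tail bound is simply unavailable for short increments.

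A secondary problem is your exponent bookkeeping, which is tied to the same confusion. You compute $\cC \approx n^{2\alpha-2}$ as if both sides of the rectangle were short, leading to $H \approx n^{\epsilon + (1-\alpha)/2}$; but then the admissibility window $H < 2n^\eta$ with $(\alpha,\eta) \in \Delta_1$, i.e.\ $6\eta < 8\alpha - 7$, closes as $\alpha \searrow \frac78$ (there $\eta \to 0$ while $(1-\alpha)/2 \to \frac{1}{16}$), contrary to your claim that it is satisfiable throughout the stated range. The decomposition through $(a_2,b_1)$ in fact produces \emph{thin} rectangles with one macroscopic side, for which $\cC \lesssim n^{\alpha-1}$ and $\sqrt{\cC n} \lesssim n^{\alpha/2}$, so that $H = n^{\epsilon/2}$ already reaches the target $n^{\frac{\alpha}{2}+\epsilon}$; assuming without loss $\epsilon < \eta/4$ (legitimate, since shrinking $\epsilon$ strengthens the conclusion), this choice fits the window for every $\alpha \in (\frac78,1)$, which is exactly how the paper closes the arithmetic. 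Your union bound over $O(n^4)$ quadruples (versus the paper's $O(n^3)$ triples after the symmetry reduction) is harmless, since $\exp(-n^{\epsilon/2})$ beats any polynomial.
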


In other words, 
\[ \lim_{n \to \infty}
\Pr\left[ X \textrm{ is $(\alpha,\epsilon)$-H\"older} \right] = 1. \]

\begin{proof}
Notice first that it suffices to consider the case $b_1 = b_2$.
That is, we prove that a.a.s.,
for all $a_1, a_2, b$,
if $|a_1 - a_2| < n^\alpha$
then $|Y_{a_1,b} - Y_{a_2,b}| < n^{\frac{\alpha}{2} + \epsilon}$.
Indeed, symmetry then implies a similar result
in the case $a_1 = a_2$.
But then, a.a.s., 
\begin{gather*}
\left| Y_{a_2,b_2}   - Y_{a_1,b_1}   \right| \le 
\left| Y_{a_2,b_2}   - Y_{a_2,b_1}   \right| +
\left| Y_{a_2,b_1}   - Y_{a_1,b_1}   \right| < 
2 n^{\frac{\alpha}{2} + \epsilon};
\end{gather*}
changing $\epsilon$ obtains the desired result.

Fix $\eta$ such that $(\alpha,\eta) \in \Delta_1$
(as defined in Equation \ref{eq:Delta1}).
We may assume that $\epsilon < \eta/4$.
We first consider $a_1 < a_2$ and $b$ fixed.
From Lemma \ref{lem:Delta} we have
\[ \Pr\left[
\left| Y_{a_2,b} - Y_{a_1,b} \right|
> n^{\frac{\alpha}{2} +\epsilon} \right] =
\Pr\left[
\left| Y_{a_2-a_1,b} \right|
> n^{\frac{\alpha}{2} +\epsilon} \right]. \]
Assume first that $n^\alpha < b < n - n^\alpha$.
Let $a_3 = a_2 - a_1 + \lfloor n^\alpha \rfloor + 1$
so that $n^\alpha < a_3 < 2 n^\alpha$.
We apply Lemma \ref{lem:ineq}
with $a = a_3$, $b = b$ and $H = n^{\epsilon/2}$:
here $\cC < 2 n^{\alpha-1}$ and
$H \sqrt{\cC n} < \frac{1}{4} n^{\frac{\alpha}{2}+\epsilon}$.
We have
\begin{gather*}
\Pr\left[ \left| Y_{a_3,b} \right|
> \frac{1}{4} n^{\frac{\alpha}{2} +\epsilon} \right]
< \Pr\left[ \left| Y_{a_3,b} \right|
> H \sqrt{\cC n} \right] < \\
< n^{-\epsilon/2} \exp\left(-\frac{n^{\epsilon}}{2}\right)
< \exp(-n^{\epsilon/2}).
\end{gather*}
Again from Lemma \ref{lem:Delta},
\[ \Pr\left[ \left| Y_{a_3,b} - Y_{a_2-a_1,b} \right|
> \frac{1}{4} n^{\frac{\alpha}{2} +\epsilon} \right]
= \Pr\left[ \left| Y_{a_3-(a_2-a_1),b} \right|
> \frac{1}{4} n^{\frac{\alpha}{2} +\epsilon} \right]; \]
a computation similar to the previous one gives
\[ \Pr\left[ \left| Y_{a_3,b} - Y_{a_2-a_1,b} \right|
> \frac{1}{4} n^{\frac{\alpha}{2} +\epsilon} \right]
< \exp(-n^{\epsilon/2})
\]
and therefore
\[ \Pr\left[ \left| Y_{a_2-a_1,b} \right| 
> \frac{1}{2} n^{\frac{\alpha}{2} +\epsilon} \right]
< 2 \exp(-n^{\epsilon/2}). \]
Consider now $b \le n^\alpha$;
let $b_3 = b + \lfloor n^\alpha \rfloor + 1$.
From our previous results,
\[ \Pr\left[ \left| Y_{a_2-a_1,b_3} \right| 
> \frac{1}{2} n^{\frac{\alpha}{2} +\epsilon} \right]
< 2 \exp(-n^{\epsilon/2}); \]
\[ \Pr\left[ \left| Y_{a_2-a_1,b_3-b} \right| 
> \frac{1}{2} n^{\frac{\alpha}{2} +\epsilon} \right]
< 2 \exp(-n^{\epsilon/2}). \]
But again from Lemma \ref{lem:Delta},
\[ \Pr\left[ \left| Y_{a_2-a_1,b_3-b} \right| 
> \frac{1}{2} n^{\frac{\alpha}{2} +\epsilon} \right] =
\Pr\left[ \left| Y_{a_2-a_1,b_3} - Y_{a_2-a_1,b} \right| 
> \frac{1}{2} n^{\frac{\alpha}{2} +\epsilon} \right], \]
and we have
\[ \Pr\left[ \left| Y_{a_2-a_1,b} \right| 
> n^{\frac{\alpha}{2} +\epsilon} \right]
< 4 \exp(-n^{\epsilon/2}). \]
The case $b \ge n - n^\alpha$ is similar.

We now consider all valid triples $(a_1,a_2,b)$:
there are fewer that $n^3$ such triples.
Thus
\[ \Pr\left[ \exists (a_1,a_2,b), |a_1 - a_2| < n^\alpha,
\;\left| Y_{a_2-a_1,b} \right| 
> n^{\frac{\alpha}{2} +\epsilon} \right]
< 4 n^3 \exp(-n^{\epsilon/2}) \to 0, \]
completing the proof.
\end{proof}

The following result can be seen as a concentration of measure:
almost every $X$ is near the product copula $C_0$.
The exponent $1/2$ is sharp;
we shall prove a stronger result later.

\begin{prop}
Let $X$ be a random discrete copula with uniform distribution.
Then, a.a.s.,
\[ \forall (a,b),\; \left|Y_{n,a,b}\right| <  \sqrt{n\log(n)}. \]
\end{prop}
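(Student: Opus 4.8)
The plan is to bound, for each fixed pair $(a,b)$, the probability that $|Y_{n,a,b}|$ exceeds $\sqrt{n\log n}$ by something much smaller than $n^{-2}$, and then take a union bound over the fewer than $(n+1)^2$ pairs. The tool is Lemma~\ref{lem:ineq}, whose only limitation is that it requires an $\alpha$-regular grid. Fix once and for all some $(\alpha,\eta)\in\Delta_1$ (as in Equation~\ref{eq:Delta1}), and set $H=H_n=4\sqrt{\log n}$, which lies in $(1,2n^\eta)$ for large $n$. The decisive observation is that $\cC\le \tfrac1{16}$ always, so $\sqrt{\cC n}\le\tfrac14\sqrt n$ and therefore $H\sqrt{\cC n}\le\sqrt{n\log n}$ \emph{uniformly} in the grid. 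Hence, for any $\alpha$-regular grid $(n,a,b)$ (i.e. $n^\alpha<a,b<n-n^\alpha$), Lemma~\ref{lem:ineq} gives
\[
\Pr\left[\,|Y_{n,a,b}|>\sqrt{n\log n}\,\right]\le\Pr\left[\,|Y_{n,a,b}|>H\sqrt{\cC n}\,\right]<\frac1H\exp\left(-\frac{H^2}{2}\right)=\frac{n^{-8}}{4\sqrt{\log n}}<n^{-8}.
\]
So every interior pair contributes a probability below $n^{-8}$, with enormous room to spare.

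The genuinely delicate part, and the main obstacle, is the boundary region where the grid fails to be $\alpha$-regular: the strips $a\le n^\alpha$, $a\ge n-n^\alpha$, $b\le n^\alpha$, $b\ge n-n^\alpha$. I would reduce each such $(a,b)$ to a bounded number of $\alpha$-regular grids by a shifting/telescoping argument using Lemma~\ref{lem:Delta}, in the spirit of the proof of Lemma~\ref{lem:holder}. In the corner case $a,b\le n^\alpha$, put $a'=a+\lceil n^\alpha\rceil$ and $b'=b+\lceil n^\alpha\rceil$; then the identity
\[
Y_{n,a,b}=Y_{n,a',b'}-\left(Y_{n,a',b'}-Y_{n,a',b}\right)-\left(Y_{n,a',b'}-Y_{n,a,b'}\right)+\left(Y_{n,a',b'}-Y_{n,a',b}-Y_{n,a,b'}+Y_{n,a,b}\right)
\]
expresses $Y_{n,a,b}$ as a signed sum of four centered box-counts, and by Lemma~\ref{lem:Delta} (allowing $a_1=0$ or $b_1=0$) each summand has the law of $Y$ over an $\alpha$-regular grid whose two sides lie between $n^\alpha$ and $2n^\alpha$. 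The edge cases such as $a\le n^\alpha<b<n-n^\alpha$ need only a single shift and two summands, while the cases $a\ge n-n^\alpha$ or $b\ge n-n^\alpha$ reduce to the previous ones through the complementation $X_{n,a,b}\mapsto b-X_{n,a,b}$ (again via Lemma~\ref{lem:Delta}, which turns the grid $(n,a,b)$ into $(n,n-a,b)$). Thus at most four $\alpha$-regular rectangles arise in every case.

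To close the argument, I would exploit that all rectangles produced in the boundary decomposition are \emph{thin}: at least one side is $O(n^\alpha)$, so each has $\cC\le n^{\alpha-1}$ and hence $\sqrt{\cC n}\le n^{\alpha/2}$. Since $\alpha<1$, for large $n$ we have $H\,n^{\alpha/2}=4\sqrt{\log n}\,n^{\alpha/2}\le\tfrac14\sqrt{n\log n}$, because $16\le n^{(1-\alpha)/2}$ eventually. Consequently $\{|Y_{n,a,b}|>\sqrt{n\log n}\}\subseteq\bigcup_{k\le 4}\{|Y^{(k)}|>\tfrac14\sqrt{n\log n}\}\subseteq\bigcup_{k\le4}\{|Y^{(k)}|>H\sqrt{\cC_k n}\}$, where $Y^{(k)}$ denotes the $k$-th summand; applying the bound above to each regular summand gives $\Pr[\,|Y_{n,a,b}|>\sqrt{n\log n}\,]<4n^{-8}$ on the boundary as well. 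Summing the per-pair bound $4n^{-8}$ over all fewer than $(n+1)^2$ pairs yields $\Pr[\exists(a,b):|Y_{n,a,b}|>\sqrt{n\log n}]<4(n+1)^2n^{-8}\to0$, which is the claimed concentration. The only point requiring care is the verification that the thin summands have $\cC$ small enough that their bounded sum stays below $\sqrt{n\log n}$; everything else is a routine union bound.
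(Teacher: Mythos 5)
Your argument is correct, and it is a genuinely different route from the paper's. The paper does not union-bound over all pairs: it fixes a coarse mesh $M \subset \{0,\ldots,n\}$ of $m \approx n^{1-\gamma}$ points with gaps $\approx n^\gamma$, applies Lemma \ref{lem:ineq} only on $M^2$ with the smaller threshold $H = 2\sqrt{\log(n)}$ (per-point bound of order $n^{-2}$, which suffices for only $m^2 \ll n^2$ points), and then transfers the estimate to arbitrary $(a,b)$ via the H\"older bound of Lemma \ref{lem:holder}, paying an additive error $n^{(1+\gamma)/4}$ and splitting $\sqrt{n\log(n)}$ into halves. You instead crank $H$ up to $4\sqrt{\log n}$, so that $\exp(-H^2/2) = n^{-8}$ swamps a union bound over all $(n+1)^2$ pairs, and you handle the non-$\alpha$-regular strips by the shift/inclusion--exclusion decomposition through Lemma \ref{lem:Delta} --- which is the same device the paper uses \emph{inside} the proof of Lemma \ref{lem:holder}, so in effect you have inlined exactly the fragment of that lemma the proposition needs. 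What each buys: your version is self-contained modulo Lemmas \ref{lem:ineq} and \ref{lem:Delta} and is actually more careful at the boundary --- the paper applies Lemma \ref{lem:ineq} with $\alpha = (2+\gamma)/3$ to all of $M^2$, yet the mesh points nearest the boundary sit at distance $\approx n^\gamma < n^{(2+\gamma)/3}$ from it, where $\alpha$-regularity fails, a slip your head-on strip decomposition avoids; the paper's version, in exchange, rehearses the grid-plus-H\"older pattern that is reused at full strength in Section \ref{sect:largegrid}. Two small repairs to your write-up: take the shift to be $\lfloor n^\alpha \rfloor + 1$ rather than $\lceil n^\alpha \rceil$, since when $n^\alpha \in \ZZ$ the shifted side equals $n^\alpha$ and regularity, which is strict, fails (this is the paper's own convention in the proof of Lemma \ref{lem:holder}); and state explicitly that Lemma \ref{lem:Delta} extends to $a_1 = 0$ or $b_1 = 0$, which is immediate from $X_{0,b} = X_{a,0} = 0$, as is the complementation identity $b - X_{n,a,b} \overset{d}{=} X_{n,n-a,b}$ you use for the strips $a \ge n - n^\alpha$.
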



\begin{proof}
Consider $\gamma \in (1/2,1)$.
Consider a subset $M$ of $\{0, 1, \ldots, n\}$
with $m \approx n^{(1-\gamma)}$ elements and gaps
between consecutive elements at a bounded distance from $n^{\gamma}$.
With $\alpha = \gamma$,
given an arbitrary pair $(a_1,b_1)$ there exists $(a_2,b_2) \in M^2$
with $|a_1-a_2|, |b_1-b_2| < n^\alpha$ and therefore,
from Lemma \ref{lem:holder} (a.a.s.),
\[ |Y_{a_1,b_1} - Y_{a_2,b_2}| < n^{(1+\gamma)/4}. \]

Set $\alpha = (2+\gamma)/3$, $\eta = (1-\gamma)/5$
and $H = 2 \sqrt{\log(n)}$.
For $(a, b) \in M^2$ we have $\cC \le 1/16$
so that $H \sqrt{\cC n} < \frac{1}{2} \sqrt{n \log(n)}$.
From Lemma \ref{lem:ineq},
\[ \Pr\left[ \left|Y_{n,a,b}\right|> \frac{1}{2} \sqrt{n\log(n)}\right] <
\Pr\left[ \left|Y_{n,a,b}\right|> H \sqrt{\cC n} \right] <
\frac{1}{2 \sqrt{\log(n)}}\; n^{-2}. \]
We now have
\begin{gather*}
\Pr\left[ \left( \max_{(a,b) \in M^2} \left|Y_{n,a,b}\right| \right)
> \frac{1}{2} \sqrt{n\log(n)}\right]  \le \\
\le \sum_{ (a, b) \in M^2 }
\Pr\left[ \left|Y_{n,a,b}\right|> \frac{1}{2} \sqrt{n\log(n)}\right]
\le \frac{1}{2 \sqrt{\log(n)}}\;m^2 n^{-2} \to 0.
\end{gather*}
Thus, a.a.s.,
\[ \max_{(a,b)} \left|Y_{n,a,b}\right|
\le \frac{1}{2} \sqrt{n\log(n)} + n^{(1+\gamma)/4}
<  \sqrt{n\log(n)}, \]
as desired.
\end{proof}

The weak point in the above proof is handling all points in $M^2$
independently, which we know to be far from true.
In the next section we study joint distributions
in order to improve this result and our understanding of $X$ and $Y$.


\section{Large grids}
\label{sect:largegrid}

In order to complete the proof of Theorem \ref{theo:discrete},
we now consider special grids with $\mi = \mj$
growing as a function of $n$.
Given $\gamma \in (5/6,1)$,
we construct the \textit{$\gamma$-grid} $\bg_\gamma$.
Let $m = \mi = \mj = \lfloor n^{(1-\gamma)} \rfloor$;
for $0 \le i \le n$, let $a_i = b_i = \lfloor \frac{in}{m} + \frac12 \rfloor$
so that gaps are approximately equal:
$\dot a_i = a_i - a_{i-1} \in (n^\gamma - 1, n^\gamma+1)$.

We already established a correspondence between families
$(\cl_{i,j})$ and $(\dot \cl_{i,j})$.
Recall that the same information is given by the family $(\tilde\lh_{i,j})$
of real numbers with $\tilde\lh \in L_{\bg_\gamma} \subset V_{\bg_\gamma}$.
Notice that
$\sum_i \tilde\lh_{i,j} \approx \sum_j \tilde\lh_{i,j} \approx 0$
and that
\[ \dot \cl_{i,j} \approx
n^{(2\gamma - 1)} + \tilde\lh_{i,j} n^{(\gamma-\frac{1}{2})} =
n^{(2\gamma - 1)}(1 + \tilde\lh_{i,j} n^{(-\gamma+\frac{1}{2})}); \]
the small errors arise from taking integer parts in the definition
of $a_i$, $b_j$.
As usual, we write $P(\bg_\gamma;\tilde\lh) = P(n,a,b,\cl)$.

In Section \ref{sect:grid}, we needed Lemma \ref{lem:goodh}
in order to prove Theorem \ref{theo:joint}.
We follow a similar strategy here, and the analogue of
Lemma \ref{lem:goodh} is Lemma \ref{lem:begood} below.
Let $L_{\bg_\gamma,\eta} \subset L_{\bg_\gamma}$ be the finite set of vectors
$\tilde l$ such that, for all $(i,j)$,
we have $|\tilde\lh_{i,j}| < n^{\eta}$.

\begin{lem}
\label{lem:begood}
Consider $\gamma \in (5/6,1)$ and $\eta > 0$ fixed; then
\[ \lim_{n \to \infty} \Pr[\tilde h \in L_{\bg_\gamma,\eta}] = 1. \]
\end{lem}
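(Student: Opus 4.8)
The plan is to follow the proof of Lemma \ref{lem:goodh} almost verbatim, the one genuinely new feature being that the number of boxes of $\bg_\gamma$ now grows with $n$. I would absorb this growth into a union bound and rely on the fact that the per-box tail furnished by Lemma \ref{lem:ineq} is super-exponentially small in $n$, hence beats the merely polynomial number $m^2$ of boxes.

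First I would fix $(\alpha,\eta') \in \Delta_1$ with $\eta' \le \eta$ and $\alpha < \gamma$, so that every box of $\bg_\gamma$ --- all of which have dimensions $\dot a_i \times \dot b_j$ with $\dot a_i,\dot b_j \in (n^\gamma - 1, n^\gamma + 1)$ --- is $\alpha$-regular and thus within the scope of Lemma \ref{lem:ineq}. Fixing a box $(i,j)$, Lemma \ref{lem:Delta} identifies the number of $1$'s it contains with a cumulative count $X_{n,\dot a_i,\dot b_j}$, so that its centered version is distributed as $Y_{n,\dot a_i,\dot b_j}$; by Equation \ref{eq:tildeh} this gives, exactly as in Lemma \ref{lem:goodh},
\[ \tilde h_{i,j} = \frac{Y_{n,\dot a_i,\dot b_j}}{\sqrt{n\,\hat a_i\,\hat b_j}}, \qquad \cC = \hat a_i(1-\hat a_i)\,\hat b_j(1-\hat b_j). \]
Since $\hat a_i,\hat b_j = O(n^{\gamma-1}) \to 0$, we have $\sqrt{n\,\hat a_i\,\hat b_j} = (1+o(1))\sqrt{\cC n}$ uniformly in $(i,j)$, so the event $\{|\tilde h_{i,j}| \ge n^\eta\}$ is, up to this harmless factor, the event $\{|Y_{n,\dot a_i,\dot b_j}| > H\sqrt{\cC n}\}$ with $H = n^\eta \in (1, 2n^\eta)$. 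Lemma \ref{lem:ineq} then yields
\[ \Pr\left[\,|\tilde h_{i,j}| \ge n^\eta\,\right] < \frac{1}{n^\eta}\exp\left(-\frac{n^{2\eta}}{2}\right). \]

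The second step is the union bound over the $m^2 = \lfloor n^{1-\gamma}\rfloor^2 \le n^{2(1-\gamma)}$ boxes:
\[ \Pr\left[\,\tilde h \notin L_{\bg_\gamma,\eta}\,\right] \le \sum_{i,j} \Pr\left[\,|\tilde h_{i,j}| \ge n^\eta\,\right] \le n^{2(1-\gamma)}\exp\left(-\frac{n^{2\eta}}{2}\right). \]
Because $\exp(-n^{2\eta}/2)$ decays faster than every power of $n$, the right-hand side tends to $0$, and passing to the complementary event proves the lemma.

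The step I expect to be the main obstacle is the very first one: certifying that each box satisfies the hypotheses of Lemma \ref{lem:ineq} uniformly in $(i,j)$. In Lemma \ref{lem:goodh} the boxes had side $\Theta(n)$, so regularity was automatic; here they have side only $\approx n^\gamma$, so $\alpha$-regularity forces $\alpha < \gamma$, and this has to be reconciled with the constraint $\alpha > 7/8$ built into $\Delta_1$. This is exactly where the lower bound on $\gamma$ is consumed and where one must track carefully the integer-part discrepancies $\dot a_i \in (n^\gamma - 1, n^\gamma + 1)$ and the passage from $\hat a_i \hat b_j$ to $\cC$, uniformly over all $m^2$ boxes. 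Everything downstream is the routine union bound above, whose only quantitative content is that a polynomial number of boxes cannot overcome a super-exponentially small tail.
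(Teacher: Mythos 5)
Your proposal is correct and follows essentially the same route as the paper's own proof: identify each box count with a cumulative count via Lemma \ref{lem:Delta}, apply Lemma \ref{lem:ineq} with $\alpha$ slightly below $\gamma$ and $H$ of order $n^\eta$, and finish with a union bound over the $m^2$ boxes, whose polynomial growth is beaten by the super-exponential tail. The only slip is that, having fixed $(\alpha,\eta')\in\Delta_1$ with $\eta'\le\eta$, you must take $H=n^{\eta'}$ rather than $H=n^\eta$ (Lemma \ref{lem:ineq} only allows $H<2n^{\eta'}$), which suffices since $\{|\tilde h_{i,j}|\ge n^\eta\}\subseteq\{|\tilde h_{i,j}|\ge n^{\eta'}\}$ --- exactly the monotonicity your choice of $\eta'$ already anticipates, and a point the paper glosses in the same way.
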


\begin{proof}
We will use Lemma \ref{lem:ineq} with $\alpha$
slightly smaller than our $\gamma$.
We have $n^\alpha < a_1 = b_1 < n - n^\alpha$
and $|a_1 - n^\gamma| < 1$ and therefore
\[ \left| \frac{a_1 b_1}{n} - n^{2\gamma - 1} \right|
< 2 n^{\gamma-1} + n^{-1} < 1. \]
We have
\[ |\tilde  h_{1,1}|  n^{(\gamma-\frac{1}{2})} =
|X_{a_1,b_1} - n^{(2\gamma - 1)}|
\le |Y_{a_1,b_1}| +
\left| \frac{a_1 b_1}{n} - n^{(2\gamma - 1)} \right|
< |Y_{a_1,b_1}| + 1. \]
Also, $\sqrt{\cC n} < n^{(\gamma - \frac12)}$.
Set $H = \frac12 n^\eta$: from Lemma \ref{lem:ineq},
\[ \Pr\left[ | \tilde  h_{1,1} | \ge n^\eta \right] 
\le \Pr\left[ |  Y_{a_1,b_1} | > \sqrt{\cC n} H \right]
< 2 n^{-\eta} \exp\left( - \frac{n^{2\eta}}{8} \right). \]
From Lemma \ref{lem:Delta} we know that the other cases are similar,
that is, 
\[ \Pr\left[ | \tilde  h_{i,j} | \ge n^\eta \right] 
< 2 n^{-\eta} \exp\left( - \frac{n^{2\eta}}{8} \right). \]
for all $(i,j)$ and therefore
\[ \Pr\left[ \exists (i,j),\;| \tilde  h_{i,j} | \ge n^\eta \right]
< 2 n^{2-\eta} \exp\left( - \frac{n^{2\eta}}{8} \right)
\to 0,  \]
completing the proof.
\end{proof}

We present a result similar to Lemmas \ref{lem:logQ} and \ref{lem:morelogQ}.
For simplicity, from now on we take $\gamma = 19/20$ and $\eta = 1/20$.

\begin{lem}
\label{lem:goodnormal}
Set $\gamma = 19/20$ and $\eta = 1/20$.
For $n \in \NN$, consider the $\gamma$-grid $\bg_\gamma$.
If $\tilde\lh \in L_{\bg_\gamma,\eta}$ then
\begin{align*}
\log(P(\bg_\gamma,\tilde\lh)) &= C_n
 - \frac{|\tilde\lh|^2}{2} + o(1), \\
C_n &= 
- \frac{(m-1)^2}{2} \log(2\pi)
+ \left( \frac{m^2-1}{2} - m(m-1)\gamma \right) \log n.
\end{align*}
\end{lem}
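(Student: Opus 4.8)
The plan is to repeat the argument of Lemma \ref{lem:moreprenormal} almost verbatim, but now treating $m = \mi = \mj = \lfloor n^{(1-\gamma)}\rfloor$ as a quantity that grows with $n$ and tracking every error as a function of $m$. The new feature compared with Section \ref{sect:grid} is that there are now $m^2 \approx n^{2(1-\gamma)} = n^{1/10}$ boxes and $m \approx n^{1/20}$ wide rows and columns, all diverging; hence each individual error must be multiplied by the number of boxes (or rows, or columns) over which it is summed, and the whole point is to verify that the specific choice $\gamma = 19/20$, $\eta = 1/20$ keeps every accumulated error $o(1)$.

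First I would start from the exact count of Lemma \ref{lem:moreexact} and apply Stirling's formula (Equation \ref{eq:stirling}) to every factorial, collecting the tail terms into
\[
\sum_i \st(\dot a_i) + \sum_j \st(\dot b_j) - \st(n) - \sum_{i,j} \st(\dot\cl_{i,j}).
\]
The hypothesis $\tilde\lh \in L_{\bg_\gamma,\eta}$ together with Equation \ref{eq:tildeh} forces $\dot\cl_{i,j} \approx n^{(2\gamma-1)} = n^{9/10}$, while $\dot a_i, \dot b_j \approx n^\gamma$; since each $\st$ is bounded by $1/12$ times the reciprocal of its argument, the box contribution is $O(n^{1/10}\cdot n^{-9/10}) = O(n^{-4/5})$ and the row and column contributions are $O(n^{1/20}\cdot n^{-19/20}) = O(n^{-9/10})$, both negligible. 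This replaces the fixed-$m$ Lemma \ref{lem:morestirling} and reduces the problem to estimating $\log\tilde P(\bg_\gamma,\tilde\lh)$ through Lemma \ref{lem:morelogQ}.

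Next I would expand the sum $-n\sum_{i,j}\hat a_i\hat b_j\,\xl(1 + \tilde\lh_{i,j}/\sqrt{n\hat a_i\hat b_j})$ using Equation \ref{eq:taylorxl}, exactly as in Lemma \ref{lem:moreprenormal}. The linear term collects into $\sqrt{n}\sum_{i,j}\sqrt{\hat a_i\hat b_j}\,\tilde\lh_{i,j}$, which vanishes identically because $\tilde\lh \in V_{\bg_\gamma}$ (Equation \ref{eq:Vg}); the quadratic term produces exactly $|\tilde\lh|^2/2$; and the cubic remainder is, per box, $O(n^{3\eta-\gamma+1/2}) = O(n^{-3/10})$, so summed over the $n^{1/10}$ boxes it is $O(n^{-1/5}) = o(1)$. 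In parallel I would treat the combinatorial prefactor $\tfrac12\log(\cC_a\cC_b/\cD)$: factoring $\cD = \cC_a^{m}\cC_b^{m}\prod_{i,j}(1 + \tilde\lh_{i,j}/\sqrt{n\hat a_i\hat b_j})$ splits it into the clean piece $-\tfrac{m-1}{2}\log(\cC_a\cC_b)$ and a logarithmic correction which, expanded to first order, is again a sum of $m^2$ polynomially small terms and hence $o(1)$.

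Finally I would evaluate $-\tfrac{m-1}{2}\log(\cC_a\cC_b)$ explicitly. Because $\dot a_i, \dot b_j \in (n^\gamma-1, n^\gamma+1)$ we have $\log\hat a_i = (\gamma-1)\log n + O(n^{-\gamma})$, whence $\log\cC_a = m(\gamma-1)\log n + O(n^{-9/10})$ and likewise for $\cC_b$; the error survives multiplication by $\tfrac{m-1}{2}$ only as $O(n^{-17/20}) = o(1)$, so that $-\tfrac{m-1}{2}\log(\cC_a\cC_b) = m(m-1)(1-\gamma)\log n + o(1)$. Adding the prefactor $-\tfrac{(m-1)^2}{2}\log(2\pi n)$ and applying the elementary identity
\[
-\frac{(m-1)^2}{2} + m(m-1)(1-\gamma) = \frac{m^2-1}{2} - m(m-1)\gamma
\]
recovers the stated $C_n$ together with the term $-|\tilde\lh|^2/2$, finishing the proof. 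The main obstacle is not any single estimate — each is as routine as in Section \ref{sect:grid} — but the uniform bookkeeping: one must confirm that multiplying by the diverging counts of rows, columns, and boxes never defeats the $o(1)$ bounds, and it is precisely this requirement that forces the admissible balance between $\gamma$ and $\eta$.
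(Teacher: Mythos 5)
Your proposal is correct and follows essentially the same route as the paper: Stirling with explicit tail control uniform in the growing $m$, exact vanishing of the linear term via $\tilde\lh \in V_{\bg_\gamma}$ (Equation \ref{eq:Vg}), the cubic remainder bounded per box by $O(n^{3\eta-\gamma+1/2})$ and summed over $m^2$ boxes, and the same evaluation of the $\log n$ coefficient from $\dot a_i \approx n^\gamma$, $\dot\cl_{i,j}\approx n^{2\gamma-1}$. The only difference is organizational --- you route through $\tilde P$ and Lemma \ref{lem:morelogQ} while the paper groups the Stirling expansion directly into five terms $T_1,\dots,T_5$ --- and your exponents ($O(n^{-4/5})$ and $O(n^{-9/10})$ for the tails, $O(n^{-1/5})$ for the cubic term, $O(n^{-17/20})$ for the prefactor) match the paper's bounds exactly.
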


Notice that the little oh notation is employed in the sense discussed
after the statement of Lemma \ref{lem:prenormal}.

\begin{proof}
We know from Lemma \ref{lem:moreexact} that
\[ \log(P(\bg_\gamma,\tilde\lh)) =
\sum_i \log(\dot a_i!) 
+ \sum_j \log(\dot b_j!) 
- \log(n!)
- \sum_{i,j} \log(\dot \cl_{i,j}!); \]
the idea is again to use Stirling approximation
but since the number of terms is unbounded we must proceed
more carefully.
Expand using Equation \ref{eq:stirling}, 
\[ \log(k!) = \xl(k) - k + \frac12 \log(k) + \frac12 \log(2\pi) + \st(k),
\quad 0 < \st(k) < 1/(12n),  \]
and consider the five such terms from right to left.
\begin{gather*}
|T_5| = \left| \sum_i \st(\dot a_i) + \sum_j \st(\dot b_j)
- \st(n) - \sum_{i,j} \st(\dot \cl_{i,j}) \right| < \\
< \frac{1}{12} \left(
\sum_i \frac{1}{\dot a_i} + \sum_j \frac{1}{\dot b_j}
+ \frac1n + \sum_{i,j} \frac{1}{\dot \cl_{i,j}} \right) < \\
< \frac{1}{6} \left(
\frac{n^{(1-\gamma)} + 1}{n^\gamma - 1} 
+ \frac{n^{(1-\gamma)} + 1}{n^\gamma - 1} + \frac{1}{n}
+ \frac{(n^{(1-\gamma)} + 1)^2}%
{n^{(2\gamma - 1)} - n^{1/20} n^{(\gamma - \frac12)}} \right) \to 0,
\end{gather*}
so the usual Stirling approximation is safe after all.
The next term $T_4$ equals
\[
\sum_i \frac{\log(2\pi)}{2}  
+ \sum_j \frac{\log(2\pi)}{2}
- \left( \frac{\log(2\pi)}{2} \right) 
- \sum_{i,j} \frac{\log(2\pi)}{2} 
= - \frac{(m-1)^2}{2} \log(2\pi). 
\]
Also, since $|\log(n^{-\gamma} \dot a_i)| < 2 n^{-\gamma}$
and $|\log(n^{(1-2\gamma)} \dot \cl_{i,j})| < C n^{(\frac{11}{20} - \gamma)}$,
\begin{gather*}
T_3 = \sum_i \frac{\log \dot a_i}{2}  
+ \sum_j \frac{\log \dot b_j}{2}
- \left( \frac{\log n}{2} \right) 
- \sum_{i,j} \frac{\log \dot \cl_{i,j}}{2} = \\
= \sum_i \frac{\gamma \log n}{2}  
+ \sum_j \frac{\gamma \log n}{2}
- \left( \frac{\log n}{2} \right) 
- \sum_{i,j} \frac{(2\gamma - 1) \log n}{2} + \qquad \\
\qquad + \sum_i \frac{\log(n^{-\gamma} \dot a_i)}{2}  
+ \sum_j \frac{\log(n^{-\gamma} \dot b_j)}{2}
- \sum_{i,j} \frac{\log(1 + \tilde\lh_{i,j} n^{(-\gamma+\frac{1}{2})})}{2} = \\
= \left( \frac{m^2 - 1}{2} - m(m-1)\gamma \right) \log n + o(1).
\end{gather*}
We have
\[ T_2 = - \sum_i \dot a_i - \sum_j \dot b_j +
n + \sum_{i,j} \dot \cl_{i,j} = 0. \]
Simplifications similar to the proof of Lemma \ref{lem:logQ} yields
that $T_1$ equals
\[ \sum_i \xl(\dot a_i) + \sum_j \xl(\dot b_j)
- \xl(n) - \sum_{i,j} \xl(\dot \cl_{i,j}) = 
- \sum_{i,j} \frac{\dot a_i \dot b_j}{n}
\xl\left(1 + \sqrt{\frac{n}{\dot a_i \dot b_j}}\; \tilde\lh_{i,j} \right).
\]

In preparation to use Equation \ref{eq:taylorxl},
$\xl(1+t) = t + t^2/2 + O(t^3)$,
notice that
\[ m = O(n^{(1-\gamma)}), \quad
\left| \sqrt{\frac{n}{\dot a_i \dot b_j}} \; \tilde \lh_{i,j} \right|
= O(n^{\frac{11}{20}-\gamma}). \]
Thus
\[ T_1 = - \sum_{i,j} \sqrt{\frac{\dot a_i \dot b_j}{n}} \; \tilde\lh_{i,j}
- \sum_{i,j} \frac{\tilde\lh_{i,j}^2}{2} + O(n^{(\frac{53}{20} - 3\gamma)})
= - \sum_{i,j} \frac{\tilde\lh_{i,j}^2}{2} + o(1), \]
completing the proof.
\end{proof}

Use the lattice $L_{\bg_\gamma}$ to define the Voronoi decomposition
of the space $V_{\bg_\gamma}$:
\begin{gather*}
V_{\bg_\gamma} = \bigcup_{\tilde l \in L_{\bg_\gamma}} R_{\tilde l}; \\
v \in R_{\tilde l} \quad \iff \quad
\left( \forall\,\tilde l' \in L_{\bg_\gamma}, \;
|v - \tilde l| \le |v - \tilde l'| \right). 
\end{gather*}
Notice that the interiors of the cells $R_{\tilde l}$ are pairwise disjoint.
We only need a crude estimate
of the diameter $\diam(R_{\tilde l})$ of a Voronoi cell.

\begin{lem}
\label{lem:diamvoronoi}
Set $\gamma = 19/20$.
We have \( \diam(R_{\tilde l}) = O(n^{(\frac52 - 3\gamma)}) \).
\end{lem}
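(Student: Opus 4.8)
The plan is to bound the covering radius of the lattice $L_{\bg_\gamma}$ inside the subspace $V_{\bg_\gamma}$ and then use the elementary fact that $\diam(R_{\tilde l})$ is at most twice this radius (for any $v \in R_{\tilde l}$ the point $\tilde l$ is a nearest lattice point, so $|v - \tilde l|$ is bounded by the covering radius). First I would fix any reference point $\tilde l_0 \in L_{\bg_\gamma}$ and recall from Equations \ref{eq:tildeh} and \ref{eq:Lg} that the difference of two lattice points has the form $\tilde\lh_{i,j} = \sigma_{i,j} N_{i,j}$, where $\sigma_{i,j} = \sqrt{n/(\dot a_i \dot b_j)}$ and $N = (N_{i,j})$ is an integer $m \times m$ matrix with vanishing row and column sums. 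Writing $\Lambda_0$ for the lattice of such integer matrices and $\Phi = \operatorname{diag}(\sigma_{i,j})$, we have $L_{\bg_\gamma} = \tilde l_0 + \Phi(\Lambda_0)$; since the covering radius is translation invariant, it suffices to bound the distance from an arbitrary $v \in V_{\bg_\gamma}$ to $\Phi(\Lambda_0)$. Because $\dot a_i, \dot b_j \in (n^\gamma - 1, n^\gamma + 1)$, all weights satisfy $\sigma_{i,j}^2 = \Theta(n^{1-2\gamma})$, while $m = \Theta(n^{1-\gamma})$.

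The heart of the argument is a crude rounding. Setting $w = \Phi^{-1}v$, a short computation from Equation \ref{eq:Vg} shows that $w$ is a real matrix with vanishing row and column sums. I would round the $(m-1)^2$ interior entries $w_{i,j}$ (with $1 \le i,j \le m-1$) to the nearest integers $N_{i,j}$, so that $|w_{i,j} - N_{i,j}| \le 1/2$, and then complete $N$ to an element of $\Lambda_0$ by choosing the last column, last row and corner entry so that every row and column sum vanishes. Since $w$ itself has vanishing row and column sums, the error in each last-column (resp. last-row) entry equals a sum of $m-1$ interior errors, hence is at most $(m-1)/2$, and the error in the single corner entry equals the sum of all $(m-1)^2$ interior errors, hence is at most $(m-1)^2/2$.

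It then remains to estimate $|v - \Phi(N)|^2 = \sum_{i,j} \sigma_{i,j}^2 (w_{i,j} - N_{i,j})^2$ block by block. The $(m-1)^2$ interior terms contribute $O(n^{1-2\gamma} m^2) = O(n^{3-4\gamma})$; the $2(m-1)$ boundary terms contribute $O(n^{1-2\gamma}\, m \cdot m^2) = O(n^{4-5\gamma})$; and the corner term contributes $O(n^{1-2\gamma} m^4) = O(n^{5-6\gamma})$. As $5 - 6\gamma > 4 - 5\gamma > 3 - 4\gamma$, the corner term dominates, giving $|v - \Phi(N)| = O(n^{(5-6\gamma)/2}) = O(n^{5/2 - 3\gamma})$. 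This bounds the covering radius of $L_{\bg_\gamma}$ and hence $\diam(R_{\tilde l}) = O(n^{5/2 - 3\gamma})$, as claimed.

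I do not expect a genuine obstacle, since the statement asks only for a crude estimate; the one point requiring care is the bookkeeping of how the interior roundoff errors accumulate into the last row, last column and especially the corner entry, because it is that single corner term, carrying the full $(m-1)^2/2$ worth of error, that dictates the final exponent. A smarter rounding that distributes the correction would yield a sharper bound, but is unnecessary here.
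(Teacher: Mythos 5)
Your proposal is correct and is essentially the paper's own argument: the paper likewise rounds the $(m-1)\times(m-1)$ leading block entrywise (to the nearest points of the component lattices $\frac{1}{\sqrt{n\hat a_i\hat b_j}}(\ZZ - \frac{\dot a_i\dot b_j}{n})$ rather than to integers after conjugating by $\Phi$, which is the same computation), completes the last row, last column and corner via the defining equations of $V_{\bg_\gamma}$, and finds the identical error cascade $n^{\frac12-\gamma} \to n^{\frac32-2\gamma} \to n^{\frac52-3\gamma}$ with the corner term dominating. Your only cosmetic additions — phrasing the conclusion via the covering radius and the bound $\diam(R_{\tilde l}) \le 2\rho$, and the explicit reduction of the lattice to $\tilde l_0 + \Phi(\Lambda_0)$ with $\Lambda_0$ the integer matrices of vanishing row and column sums — simply make explicit what the paper leaves implicit when it remarks that the constructed point is ``usually not the nearest one.''
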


\begin{proof}
Given a point $\tilde z \in V_{\bg_\gamma}$,
we obtain a nearby $\tilde l \in L_{\bg_\gamma}$
(usually not the nearest one!).
We shall use the conditions in displays \ref{eq:Vg} and \ref{eq:Lg}
which define $V_{\bg_\gamma}$ and $L_{\bg_\gamma}$.
Set, for $1 \le i, j < m$ (that is,
for entries in the $(m-1) \times (m-1)$ leading principal minor),
$\tilde l_{i,j}$ to be the element of
the set 
\[ \frac{1}{\sqrt{n \hat a_i \hat b_j}}
\left( \ZZ - \frac{\dot a_i \dot b_j}{n} \right) \]
nearest to $\tilde z_{i,j}$.
Use all but two of the equations defining $V_{\bg_\gamma}$,
\[ \sum_j \sqrt{\hat b_j}\, \tilde l_{i,j} = 0, \quad
\sum_i \sqrt{\hat a_i}\, \tilde l_{i,j} = 0, \]
for $1 \le i < m$ and $1 \le j < m$,
to obtain $\tilde l_{i,m}$ and $\tilde l_{m,j}$.
Finally, set $\tilde l_{m,m}$ to satisfy the two remaining equations,
\[ \sum_i \sqrt{\hat a_i}\, \tilde l_{i,m} = 
\sum_j \sqrt{\hat b_j}\, \tilde l_{m,j} = 0, \]
thus defining $\tilde l \in L_{\bg_\gamma}$.
For $1 \le i, j < m$, 
\[ |\tilde z_{i,j} - \tilde l_{i,j}| <
1/\sqrt{n\hat a_i \hat b_j} < 2 n^{(\frac12 - \gamma)}. \]
For $1 \le i < m$, thus,
$|\tilde z_{i,m} - \tilde l_{i,m}| < 2mn^{(\frac12 - \gamma)}
< 4n^{(\frac32 - 2\gamma)}$.
A similar estimate holds for $|\tilde z_{m,j} - \tilde l_{m,j}|$
(with $1 \le j < m$).
Finally, $|\tilde z_{m,m} - \tilde l_{m,m}| < 8n^{(\frac52 - 3\gamma)}$
and, computing the euclidean norm of the difference,
\[ |\tilde z - \tilde l| < 16 n^{(\frac52 - 3\gamma)}, \]
completing the proof.
\end{proof}

Endow the vector space $V_{\bg_\gamma}$ with a unit normal measure:
for an open subset $A \subseteq V_{\bg_\gamma}$, set
\[ \mu(A) = 
(2\pi)^{- \frac{(m-1)^2}{2} }
\int_{A} \exp\left(- \frac{|\tilde w|^2}{2} \right) d\tilde w.  \]

\begin{lem}
\label{lem:goodvoronoi}
Set $\gamma = 19/20$ and $\eta = 1/20$.
For $n \in \NN$, consider the $\gamma$-grid $\bg_\gamma$.
If $\tilde l \in L_{\bg_\gamma,\eta}$ then
\[ P(\bg_\gamma,\tilde l) \sim \mu(R_{\tilde l}). \]
\end{lem}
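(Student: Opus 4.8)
The plan is to write both $P(\bg_\gamma,\tilde l)$ and $\mu(R_{\tilde l})$ as the common Gaussian factor $e^{-|\tilde l|^2/2}$ times a normalizing constant that does \emph{not} depend on $\tilde l$, and then to fix the two constants by a single summation; in this way the covolume of $L_{\bg_\gamma}$ cancels and never has to be computed. First I would record that, $L_{\bg_\gamma}$ being a lattice in $V_{\bg_\gamma}$, all its Voronoi cells are translates of one another and hence share a common volume $\kappa = \Vol(R_{\tilde l})$. For $\tilde l \in L_{\bg_\gamma,\eta}$ the cell is tiny: Lemma \ref{lem:diamvoronoi} gives $\diam(R_{\tilde l}) = O(n^{-7/20})$, while $|\tilde l|^2 \le (m-1)^2 n^{2\eta} = O(n^{1/5})$ forces $|\tilde l| = O(n^{1/10})$. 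Hence for $\tilde w \in R_{\tilde l}$ one has $\bigl||\tilde w|^2 - |\tilde l|^2\bigr| \le |\tilde w - \tilde l|(|\tilde w| + |\tilde l|) = O(n^{-1/4})$, so the density $e^{-|\tilde w|^2/2}$ is constant to within a factor $1 + o(1)$ across the cell, uniformly in $\tilde l$; integrating yields $\mu(R_{\tilde l}) = (2\pi)^{-(m-1)^2/2}\kappa\, e^{-|\tilde l|^2/2}(1 + o(1))$.

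Next I would invoke Lemma \ref{lem:goodnormal}, which gives $P(\bg_\gamma,\tilde l) = e^{C_n} e^{-|\tilde l|^2/2}(1 + o(1))$ uniformly over $L_{\bg_\gamma,\eta}$. Dividing, the factor $e^{-|\tilde l|^2/2}$ cancels and
\[ \frac{P(\bg_\gamma,\tilde l)}{\mu(R_{\tilde l})} = K\,(1+o(1)), \qquad K = \frac{e^{C_n}(2\pi)^{(m-1)^2/2}}{\kappa}, \]
where $K$ depends only on $n$. It then remains only to show $K \to 1$.

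To pin down $K$ I would normalize over $L_{\bg_\gamma,\eta}$. Writing $S = \sum_{\tilde l \in L_{\bg_\gamma,\eta}} e^{-|\tilde l|^2/2}$, the uniform expansions give $\sum P(\bg_\gamma,\tilde l) = e^{C_n} S(1+o(1))$ and $\sum \mu(R_{\tilde l}) = (2\pi)^{-(m-1)^2/2}\kappa S(1+o(1))$. By Lemma \ref{lem:begood} the first sum is $\Pr[\tilde h \in L_{\bg_\gamma,\eta}] = 1 - o(1)$. For the second, under $\mu$ each ambient coordinate $\tilde w_{i,j}$ is a centered Gaussian of variance at most $1$ (a diagonal entry of the orthogonal projection onto $V_{\bg_\gamma}$), and $\bigcup_{\tilde l \in L_{\bg_\gamma,\eta}} R_{\tilde l} \supseteq \{\tilde w : |\tilde w_{i,j}| < n^\eta - \diam \text{ for all } (i,j)\}$; a union bound over the $(m-1)^2 = O(n^{1/10})$ coordinates gives $1 - \sum \mu(R_{\tilde l}) = O\bigl(n^{1/10} e^{-n^{1/10}/2}\bigr) \to 0$. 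Comparing the two normalizations forces $e^{C_n}S \to 1$ and $(2\pi)^{-(m-1)^2/2}\kappa S \to 1$, hence $K \to 1$ and the lemma follows.

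The delicate step will be the last one: controlling $\mu\bigl(\bigcup R_{\tilde l}\bigr)$ through the coordinatewise marginals and absorbing the boundary layer of width $\diam$ around the cube $\{|\tilde w_{i,j}| < n^\eta\}$. This is exactly where the choices $\gamma = 19/20$, $\eta = 1/20$ pay off, since they make both $(m-1)^2$ and $(n^\eta)^2$ of order $n^{1/10}$, so the Gaussian tail $e^{-n^{1/10}/2}$ still beats the polynomial count of coordinates.
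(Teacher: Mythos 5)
Your proposal is correct and follows essentially the same route as the paper: use Lemma \ref{lem:diamvoronoi} to show the Gaussian density is constant up to $1+o(1)$ on each Voronoi cell (via $\bigl||\tilde w|^2 - |\tilde l|^2\bigr| = O(n^{-1/4})$), invoke Lemma \ref{lem:goodnormal} for $P(\bg_\gamma,\tilde l)$, and pin down the remaining $n$-dependent constant by summing both sides over $L_{\bg_\gamma,\eta}$ and using Lemma \ref{lem:begood}. The only difference is that you explicitly justify $\sum_{\tilde l \in L_{\bg_\gamma,\eta}} \mu(R_{\tilde l}) \to 1$ by a coordinatewise Gaussian tail bound (and note the translation-invariance giving constant cell volume $\kappa$), details the paper asserts with only a loose reference to Lemma \ref{lem:begood}; your minor substitutions of $(m-1)^2$ for $m^2$ do not affect any of the asymptotics.
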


The $\sim$ symbol above is used in the sense discussed after the statement
of Lemma \ref{lem:prenormal}.
More explicitly: given $\epsilon > 0$ there exists $N$ such that
if $n > N$ and $\tilde l \in L_{\bg_\gamma,\eta}$ then
\[ 1-\epsilon <
\frac{P(\bg_\gamma,\tilde l)}{\mu(R_{\tilde l})} <
1+\epsilon. \]

\begin{proof}
Notice that the condition that $|\tilde\lh_{i,j}| < n^{\eta}$
for all $(i,j)$ implies that
$|\tilde\lh|^2 < m^2 n^{2\eta} < 4 n^{(2 - 2\gamma + 2\eta)}$
and therefore $|\tilde\lh| < 2 n^{(1 - \gamma + \eta)}$.
In this region,
\[ \int_{R_{\tilde\lh}}
\exp\left( - \frac{|\tilde w|^2}{2} \right)
d\tilde w \sim
\Vol(R_{\tilde\lh})
\exp\left( - \frac{|\tilde \lh|^2}{2} \right).
\]
Indeed, for $\tilde w \in R_{\tilde\lh}$,
we have (from Lemma \ref{lem:diamvoronoi})
$|\tilde w - \tilde \lh| < C n^{(\frac52-3\gamma)}$
and $|\tilde \lh|, |\tilde w| < 4 n^{(1 - \gamma + \eta)}$.
Thus $||\tilde w|^2 - |\tilde \lh|^2| <
||\tilde w| + |\tilde \lh||\cdot||\tilde w| - |\tilde \lh|| <
8C n^{(\frac72 - 4\gamma + \eta)} \to 0$,
proving the claim.

Thus, from Lemma \ref{lem:goodnormal},
assuming $\tilde \lh \in L_{\bg_\gamma,\eta}$,
\[ P(\bg_\gamma,\tilde \lh) \sim \tilde C_n\, \mu(R_{\tilde \lh}) =
\tilde C_n\, (2 \pi)^{-\frac{(m-1)^2}{2}}
\int_{R_{\tilde \lh}} \exp\left( - \frac{|\tilde w|^2}{2} \right) d\tilde w \]
for some positive constant $\tilde C_n$.
On the other hand, we know from Lemma \ref{lem:begood} that
\[ \sum_{\tilde \lh \in L_{\bg_\gamma,\eta}} \mu(R_{\tilde \lh})
\to 1,  \quad
\sum_{\tilde \lh \in L_{\bg_\gamma,\eta}} P(\bg_\gamma,\tilde \lh) \to 1.  \]
Thus, $\tilde C_n$ can be taken to be $1$, completing the proof.
\end{proof}

Notice that we computed the approximate value of $\Vol(R_{\tilde l})$:
\[ \Vol(R_{\tilde l}) \sim
n^{\left( \frac{m^2 - 1}{2} - m(m-1)\gamma \right)}. \]

We are ready to present the proof of Theorem \ref{theo:discrete}.

\begin{proof}
We describe the space $\Omega_n$ and define the random variables
$X_n$ (which obtains a discrete copula) and $f$ (the bridged Brownian sheet).
First, we obtain random variables
$\tilde z \in V_{\bg_\gamma}$ and $\tilde h \in L_{\bg_\gamma}$
(towards samplings of $f$ and $\sqrt{n}\; \by_n$) with
\[ \Pr_{\Omega_n}[\tilde z \in A] = \mu(A),\quad
A \subset V_{\bg_\gamma}; \qquad
\Pr_{\Omega_n}[\tilde h = \tilde l] = P(\bg_\gamma,\tilde l). \]
These two random variables are far from independent.
In a way, we would like to always have $\tilde z \in R_{\tilde h}$
but that is not quite possible since in principle
\( \mu(R_{\tilde l}) \ne  P(\bg_\gamma,\tilde l) \).

\begin{figure}[t]
\begin{center}
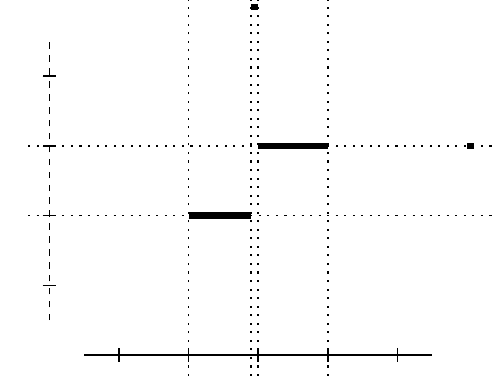
\end{center}
\caption{Projecting $\Omega_n$
on $V_{\bg_\gamma}$ and $L_{\bg_\gamma}$}
\label{fig:Omegan}
\end{figure}

Figure \ref{fig:Omegan} shows the beginning of the construction
of $\Omega_n$ and these two random variables.
Sometimes, as in the figure, we have 
\( \mu(R_{\tilde l}) >  P(\bg_\gamma,\tilde l) \).
For some values of $\tilde z \in R_{\tilde l}$,
we must therefore have $\tilde h \ne \tilde l$.
In the second step,
\( \mu(R_{\tilde l}) <  P(\bg_\gamma,\tilde l) \)
and, for some values of $\tilde z$ in other Voronoi cells,
we have $\tilde h = \tilde l$.
We know, however, from Lemmas \ref{lem:begood} and \ref{lem:goodvoronoi}
that, following this construction,
\[ \lim_{n \to \infty} \Pr_{\Omega_n}[\tilde z \in R_{\tilde h}] = 1. \]

Let $u_i = v_i = a_i/n$ so that $\bg_\gamma$ can be interpreted
as a grid in the unit square $[0,1]^2$. 
Given $\tilde h = \tilde l \in L_{\bg_\gamma}$,
uniformly select $X_n$ from the set of discrete copulas
$C$ for which, as in Equation \ref{eq:tildeh},
\[ \hat \cl_{i,j} = \hat a_i \hat b_j +
\sqrt{\frac{\hat a_i \hat b_j}{n}}\; \tilde l_{i,j} \]
and therefore
\[ \sqrt{n}\;\by_n(u_i,v_j) = 
\sum_{i'\le i; \; j'\le j}
\sqrt{\hat a_{i'} \hat b_{j'} }\;\tilde h_{i',j'}. \]
As in Equation \ref{eq:jointbrown},
set the values of $f$ on $\bg_\gamma$ to be
\[ f\left(u_i,v_j\right) =
\sum_{i'\le i; \; j'\le j}
\sqrt{\hat a_{i'} \hat b_{j'} }\;\tilde z_{i',j'}. \]
From this point on, obtain a bridged Brownian sheet $f$ in the usual way.
This completes the construction of $\Omega_n$ and of the random variables
$X_n$ and $f$. Items (a) and (b) in the statement of the theorem
are trivial; we are left with proving (c).

We may disregard subsets of $\Omega_n$
whose probability tends to zero (as a function of $n$).
We may therefore assume that $\tilde z \in R_{\tilde h}$ and,
from Lemma \ref{lem:diamvoronoi},
that $|\tilde z - \tilde h| < n^{(\frac52 - 3\gamma)}$.
For a point $(u_i,v_j)$ on the grid $\bg_{\gamma}$ we have
\[ \sqrt{n}\;\by_n(u_i,v_j) - f\left(u_i,v_j\right) =  
\sum_{i'\le i; \; j'\le j}
\sqrt{\hat a_{i'} \hat b_{j'} }\;(\tilde h_{i',j'} - \tilde z_{i',j'}) \]
and therefore, from Lemma \ref{lem:diamvoronoi},
\begin{equation} 
\label{eq:th11}
\left| \sqrt{n}\;\by_n(u_i,v_j) - f\left(u_i,v_j\right) \right|
= \left| \langle M, (\tilde h - \tilde z) \rangle \right| 
\le 2 |(\tilde h - \tilde z)| 
\le 2 n^{(\frac52-3\gamma)}.
\end{equation} 
Here $M_{i',j'} = \sqrt{\hat a_{i'} \hat b_{j'}}$ for $i'\le i$, $j' \le j$
and $0$ otherwise; clearly, $|M|^2 = \sum (M_{i',j'})^2 \le 4$.

For an arbitrary point $(u,v) \in \II^2$,
let $(u_i,v_j)$ be the point in the grid $\bg_\gamma$ closest to $(u,v)$.
Apply Lemma \ref{lem:holder} with $\alpha = \gamma = 19/20$
and $\epsilon = 1/100$ to deduce that, a.a.s.,
$X_n$ is $(\alpha,\epsilon)$-H\"older and therefore,
for all $(u,v)  \in \II^2$,
\begin{equation} 
\label{eq:th12}
| \sqrt{n}\;\by_n(u,v) - \sqrt{n}\;\by_n(u_i,v_j) |
< n^{(\frac{\gamma}{2} + \epsilon - \frac12)}. 
\end{equation} 
Furthermore, from Lemma \ref{lem:holderbrownsheet}
with $r = 4 n^{-\frac{1}{20}}$, $\delta = \frac15$ and $C = \frac14$,
a.a.s. we have that, for all $(u,v)  \in \II^2$,
\begin{equation} 
\label{eq:th13}
| f(u,v) - f(u_i,v_j) | < n^{(\frac{\gamma}{2} + \epsilon - \frac12)}. 
\end{equation} 
Thus, from Equations \ref{eq:th11}, \ref{eq:th12} and \ref{eq:th13},
we a.a.s. have
\[ | \sqrt{n} \by_n - f |_{C^0} <
2 n^{(\frac{\gamma}{2} + \epsilon - \frac12)} + 2 n^{(\frac52-3\gamma)}
\to 0, \]
completing the proof.
\end{proof}

\section{A concentration result}
\label{sect:concentration}

In this section we present some applications of the main theorem.
We recall some basic statistics
for a bridged Brownian sheet $f \in C^0_0(\II^2)$
(\cite{DudleyRAP}, \cite{DudleyUCLT}).
For any fixed $p \in [1, +\infty]$
\[ \Pr\left[ \| f \|_{L^p} < r \right] = \rho^p(r) \]
defines a strictly increasing continuous function
$\rho^p: [0,+\infty) \to \II$
with
\[ \lim_{r \to 0} \rho^p(r) = 0; \quad
\lim_{r \to +\infty} \rho^p(r) = 1. \]
Theorem \ref{theo:discrete} obtains similar results for discrete copulas.
For $C \in \cS_n$ and $p \in [1,+\infty)$ define
\[ d^p(C) =
\left( \frac{1}{n^2} \sum_{a,b} \left| \frac{C_{a,b} - \frac{ab}{n}}{\sqrt{n}}
\right|^p \right)^{1/p}; \qquad
d^\infty(C) = \max_{a,b} \left| \frac{C_{a,b} - \frac{ab}{n}}{\sqrt{n}}
\right|. \]

\begin{cor}
\label{cor:rho}
Let $p \in [1,+\infty]$ and $r \in (0,1)$ be fixed.
Then
\[ \lim_{n\to \infty} \Pr\left[
d^p(X) < r
\right] = \rho^p(r). \]
\end{cor}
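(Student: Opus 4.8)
The plan is to recognize $d^p(X)$ as a discretization of the $L^p$ norm of $\sqrt{n}\,\by_n$ and then invoke Theorem~\ref{theo:discrete} to replace $\sqrt{n}\,\by_n$ by the bridged Brownian sheet $f$. Write $g_n = \sqrt{n}\,\by_n$; by construction $g_n(\frac{a}{n},\frac{b}{n}) = (C_{a,b} - \frac{ab}{n})/\sqrt{n}$, so the sum defining $d^p(X)$ is precisely the Riemann sum for $\|g_n\|_{L^p(\II^2)}$ over the cells of side $\frac1n$, sampled at grid points (and $d^\infty(X)$ is the maximum of $|g_n|$ over grid points). For $p=\infty$ there is nothing to discretize: since $g_n$ is bilinear on each small square its maximum modulus is attained at a corner, whence $d^\infty(X) = \|g_n\|_{C^0}$ exactly.

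For finite $p$ I would first dispose of the Riemann-sum error. Within a single square $g_n$ varies by at most $4/\sqrt{n}$ (the scaled form of the bound $|\by(u_0,v_0)-\by(u_1,v_1)| < 4/n$ noted in Section~\ref{sect:pointwise}), and since $g_n \in C^0_0(\II^2)$ the boundary grid points contribute nothing. On the event $\{\|g_n\|_{C^0} \le M\}$ the map $t \mapsto |t|^p$ has Lipschitz constant at most $pM^{p-1}$ on the relevant range, so $|g_n|^p$ oscillates by at most $4pM^{p-1}n^{-1/2}$ per cell; summing over the cells of area $n^{-2}$ gives $\big| d^p(X)^p - \|g_n\|_{L^p}^p \big| \le 4pM^{p-1}n^{-1/2} \to 0$. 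Because $\|g_n - f\|_{C^0}\to 0$ in probability (Theorem~\ref{theo:discrete}) and $f$ is a.s. bounded, the family $\|g_n\|_{C^0}$ is tight, so for every $\epsilon>0$ there is $M$ with $\Pr[\|g_n\|_{C^0}>M]<\epsilon$ for large $n$; hence the displayed error tends to $0$ in probability and, taking $p$-th roots, $d^p(X) - \|g_n\|_{L^p} \to 0$ in probability.

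Next I would pass from $g_n$ to $f$. Since $\II^2$ carries unit measure, $\|\cdot\|_{L^p}\le\|\cdot\|_{C^0}$ for every $p\in[1,\infty]$, so $\big|\,\|g_n\|_{L^p} - \|f\|_{L^p}\,\big| \le \|g_n-f\|_{L^p} \le \|g_n-f\|_{C^0}\to 0$ in probability by Theorem~\ref{theo:discrete}. Combining this with the previous paragraph (and, for $p=\infty$, directly with the exact identity $d^\infty(X)=\|g_n\|_{C^0}$) yields $d^p(X) - \|f\|_{L^p} \to 0$ in probability on the coupled spaces $\Omega_n$.

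Finally, the distributional conclusion follows from a Slutsky-type argument using the stated continuity of $\rho^p$. For fixed $\delta>0$,
\[ \Pr[d^p(X) < r] \le \Pr\big[\|f\|_{L^p} < r+\delta\big] + \Pr\big[|d^p(X)-\|f\|_{L^p}| \ge \delta\big] = \rho^p(r+\delta) + o(1), \]
and symmetrically $\Pr[d^p(X)<r] \ge \rho^p(r-\delta) - o(1)$, using that $\|f\|_{L^p}$ has distribution function $\rho^p$ for every $n$. Letting $n\to\infty$ and then $\delta\searrow 0$, the continuity of $\rho^p$ gives $\lim_{n\to\infty}\Pr[d^p(X)<r]=\rho^p(r)$. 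The only genuinely delicate point is the Riemann-sum estimate of the second paragraph, where the unbounded—but tight—sup norm of $g_n$ must be controlled; the rest is soft.
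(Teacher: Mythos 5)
Your proposal is correct and follows essentially the same route as the paper, whose two-line proof handles $p=\infty$ directly from Theorem~\ref{theo:discrete} and reduces finite $p$ to the inequality $\|\cdot\|_{L^p}\le\|\cdot\|_{C^0}$ on the unit square. The only difference is that you make explicit what the paper leaves implicit --- the Riemann-sum discrepancy between $d^p(X)$ and $\|\sqrt{n}\,\by_n\|_{L^p}$ (controlled via the per-cell oscillation bound and tightness of the sup norm) and the final Slutsky-type step using the continuity of $\rho^p$ --- both of which are sound.
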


\begin{proof}
Theorem \ref{theo:discrete} directly yields the case $p = \infty$.
For the general case, notice that
$\|\sqrt{n}\;\by_n - f\|_{C_0} < \epsilon$ implies
$\|\sqrt{n}\;\by_n - f\|_{L^p} < \epsilon$.
\end{proof}

A weaker statement is the following concentration result,
which makes no reference to bridged Brownian sheets.

\begin{cor}
\label{cor:nobrown}
Let $g: \NN \to \RR$ be a function with $\lim g(n) = +\infty$.
Let $X_n$ be a random discrete copula in $\cS_n$.
Then
\[ \lim_{n \to \infty}
\Pr\left[ \left\| \by_n \right\|_{C^0} >
\frac{g(n)}{\sqrt{n}} \right] = 0. \] 
\end{cor}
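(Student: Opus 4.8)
The plan is to reduce the statement to the coupling already provided by Theorem~\ref{theo:discrete}. First I would rewrite the event in terms of $\sqrt{n}\,\by_n$: since $\|\by_n\|_{C^0} > g(n)/\sqrt{n}$ holds if and only if $\|\sqrt{n}\,\by_n\|_{C^0} > g(n)$, it suffices to prove that $\Pr[\|\sqrt{n}\,\by_n\|_{C^0} > g(n)] \to 0$. I would work on the probability spaces $\Omega_n$ furnished by Theorem~\ref{theo:discrete}, on which both $X_n$ (hence $\by_n$) and a bridged Brownian sheet $f \in C^0_0(\II^2)$ are defined and satisfy $\|\sqrt{n}\,\by_n - f\|_{C^0} \to 0$ in probability.

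The core of the argument is a triangle-inequality split combined with the tightness of $\|f\|_{C^0}$. Fix $\epsilon > 0$. Because $f$ is almost surely continuous on the compact square, $\|f\|_{C^0}$ is a.s. finite, and its law does not depend on $n$; hence I can choose $R$ with $\Pr[\|f\|_{C^0} > R] < \epsilon/2$, uniformly in $n$. Since $g(n) \to +\infty$, there is $n_1$ with $g(n) > R + 1$ for $n > n_1$. For such $n$, if both $\|\sqrt{n}\,\by_n - f\|_{C^0} \le 1$ and $\|f\|_{C^0} \le R$ hold, then $\|\sqrt{n}\,\by_n\|_{C^0} \le R + 1 < g(n)$, so
\[ \Pr\left[\|\sqrt{n}\,\by_n\|_{C^0} > g(n)\right] \le
\Pr\left[\|\sqrt{n}\,\by_n - f\|_{C^0} > 1\right] + \Pr\left[\|f\|_{C^0} > R\right]. \]
By Theorem~\ref{theo:discrete} the first term tends to $0$, so it lies below $\epsilon/2$ for $n$ past some $n_2$; the second is below $\epsilon/2$ by the choice of $R$. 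Thus the left-hand side is below $\epsilon$ for $n > \max(n_1,n_2)$, and as $\epsilon$ is arbitrary the limit is $0$.

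I do not expect a genuine obstacle here, since the statement is strictly weaker than Theorem~\ref{theo:discrete}; the only point demanding care is that $R$ be selected independently of $n$, which is legitimate precisely because each $f$ on $\Omega_n$ is a bona fide bridged Brownian sheet with the same ($n$-free) law. As an alternative route, one could instead invoke Corollary~\ref{cor:rho} with $p = \infty$, using that bilinear interpolation attains its extrema at the corners of each small square, so that $\|\sqrt{n}\,\by_n\|_{C^0} = d^\infty(X)$, and then let $r \to +\infty$ in $1 - \rho^\infty(r)$. I would nonetheless prefer the direct coupling argument above, as it sidesteps the restriction $r \in (0,1)$ in the statement of Corollary~\ref{cor:rho}.
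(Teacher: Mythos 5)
Your proof is correct, and it takes a different route from the paper's. The paper disposes of Corollary~\ref{cor:nobrown} in one line: it invokes Corollary~\ref{cor:rho} with $p=\infty$ (using, as you note, that the bilinear interpolation attains its extrema at grid corners, so $\|\sqrt{n}\,\by_n\|_{C^0}=d^\infty(X_n)$) together with $\lim_{r\to+\infty}\rho^\infty(r)=1$. You instead unpack the coupling of Theorem~\ref{theo:discrete} directly: convergence in probability of $\|\sqrt{n}\,\by_n-f\|_{C^0}$ to $0$, plus tightness of $\|f\|_{C^0}$ (legitimate uniformly in $n$, since the law of $f$ on each $\Omega_n$ is the same), plus a triangle-inequality split against the threshold $g(n)>R+1$. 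The two arguments have the same mathematical content --- your tightness-plus-coupling step is essentially what proves Corollary~\ref{cor:rho} in the first place --- but yours is self-contained where the paper's is a citation, and it has the genuine merit of sidestepping a literal inconsistency in the paper: Corollary~\ref{cor:rho} is stated only for $r\in(0,1)$, while the paper's one-line proof needs to let $r\to+\infty$ (the restriction on $r$ is evidently an oversight, as the proof of Corollary~\ref{cor:rho} works for all $r>0$, but your version does not have to argue this). The cost is a few extra lines; the benefit is a proof that stands on Theorem~\ref{theo:discrete} alone.
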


\begin{proof}
The result follows from Corollary \ref{cor:rho} and
$\lim_{r\to+\infty} \rho^{\infty}(r) = 1$.
\end{proof}

\section{Discrete copulas and lozenge tilings}
\label{sect:tilings}

The graph of a discrete copula can also be seen as a pile of cubes.
Figure \ref{fig:arctic} exemplifies this correspondence
for a permutation in $S_8$.
The north-east corner of the square on the left is taken
to the south vertex of the bottom of the cube.
The bullets correspond to the black squares.

\begin{figure}[ht]
\begin{center}
\includegraphics[height=50mm,angle=0]{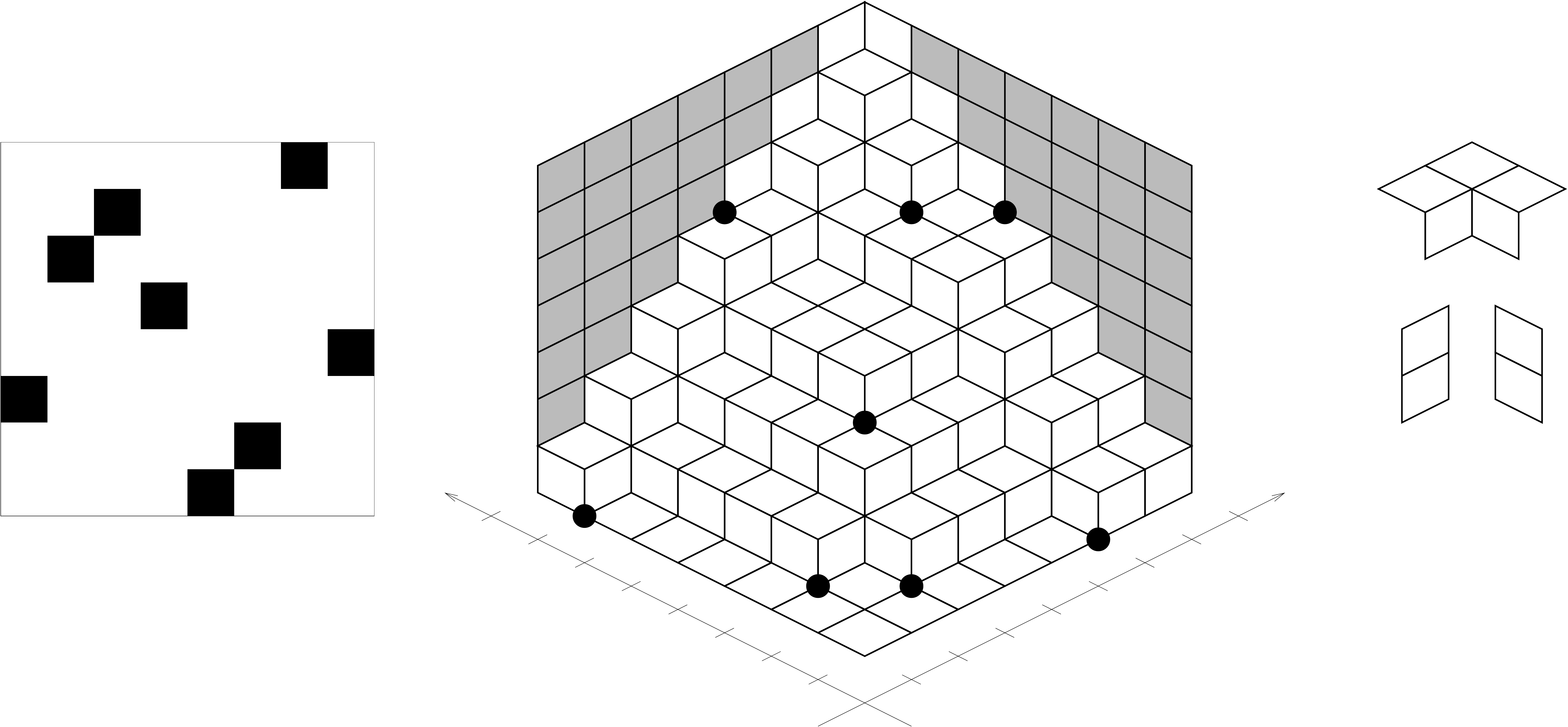}
\end{center}
\caption{A permutation and its representation as a lozenge tiling
of a subset of the hexagon; the configurations to the right are forbidden.}
\label{fig:arctic}
\end{figure}

This defines a bijection between $\cS_n$
and a certain set of tilings of a regular hexagon of side $n-1$.
Clearly, not every gravitationally stable pile of cubes corresponds
to a permutation:
the grey walls are forced and the three configurations
to the right of Figure \ref{fig:arctic} are forbidden
(in the white part).
It is not hard to prove that these conditions characterize
tilings corresponding to copulas.

Theorem \ref{theo:discrete} implies
that if we randomly select a tiling corresponding to a copula
the surface of the pile of cubes is essentially
an affine transformation of a Brownian sheet.
The reader may contrast this result with
the celebrated Arctic circle theorem  (see \cite{JPS}):
in our scenario there are no ``frozen'' regions.

\section{Birkhoff copulas}
\label{sect:birkhoff}

A \textit{Birkhoff copula} is a matrix $C$ with \textit{real}
entries satisfying conditions (i), (ii) and (iii)
in the definition of discrete copula.
Discrete copulas are associated with permutation matrices.
The more general Birkhoff copulas
correspond to doubly stochastic matrices.
More precisely, the \textit{Birkhoff polytope} $B_n$ is
the convex set of $n \times n$ doubly stochastic matrices,
whose vertices are the permutation matrices.
The counterpart to $\cS_n$ is its convex hull $\cB_n$,
the polytope of Birkhoff copulas.
The natural bijection $B_n \to \cB_n$ takes $M$  to $C$,
where again $C$ is obtained by integration:
\[ C_{a,b} = \sum_{a' \le a; \; b' \le b} M_{a',b'}\,. \]
Thus, a Birkhoff copula indicates the joint distribution
of two discrete random variables $Z_0$ and $Z_1$,
both uniform in $\{1, 2, \ldots, n\}$ but usually not independent.

\medskip

For the Birkhoff case,
numerical experiments (using a Metropolis style algorithm)
obtain uniformly distributed random
elements of $\cB_n$, the set of Birkhoff copulas.
We then study their statistics, particularly the analogue
of Theorem \ref{theo:joint}.
The results ratify the following conjecture,
a counterpart of Theorem \ref{theo:discrete}.


\begin{conj}
\label{conj:birkhoff}

There exist probability spaces $\Omega_n$ with the following properties.
\begin{enumerate}[(a)]
\item{On $\Omega_n$ there exists a random variable
$X_n$ uniformly distributed in $\cB_n$.}
\item{On $\Omega_n$ a sample-continuous bridged Brownian sheet process
$f$ in $C^0_0(\II^2)$ is defined.}
\item{For all $\epsilon > 0$,
\( \lim_{n \to \infty}
\Pr\left[ \left\| n\;\by_n - f \right\|_{C^0} >
\epsilon \right] = 0. \) }
\end{enumerate}
\end{conj}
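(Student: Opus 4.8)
The plan is to mirror the architecture of the proof of Theorem \ref{theo:discrete}, replacing every counting statement about permutation matrices by a volume statement about the Birkhoff polytope $B_n$. The basic object becomes the joint \emph{density}, rather than the joint probability, of the box-sums of a uniformly random doubly stochastic matrix on a grid. Concretely, for a grid $\bg=(a,b)$ I would study the pushforward of normalized Lebesgue measure on $B_n$ under the coarsening map $M \mapsto (\dot\cl_{i,j})$, where $\dot\cl_{i,j}$ is the sum of the entries of $M$ in box $(i,j)$. As in the discrete case the marginals satisfy $\sum_j \dot\cl_{i,j} = \dot a_i$ and $\sum_i \dot\cl_{i,j} = \dot b_j$, so the box-sum vector ranges over a transportation polytope, and the density at a point is (up to a Jacobian) the volume of the corresponding fiber of $B_n$, a polytope of codimension $(\mi-1)(\mj-1)$. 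The analogue of Lemma \ref{lem:moreexact} is thus a ratio of transportation-polytope volumes, and the goal is a local central limit theorem for it.

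First I would pin down the scaling. A uniformly random element of $B_n$ has entries of mean $1/n$ and standard deviation of order $1/n$, but the doubly-stochastic constraints enforce strong negative correlations, so that the block sums $C_{a,b}-\frac{ab}{n}$ have variance of order $1$; this is exactly why the statement rescales by $n\,\by_n$ rather than $\sqrt n\,\by_n$. The target (the analogue of Lemmas \ref{lem:stirling} and \ref{lem:morestirling}, then \ref{lem:moreprenormal}) is that the fiber density is asymptotically Gaussian in the normalized box-sum variables, centered at the product-copula value $\dot a_i\dot b_j/n$, which is the exact common mean and barycenter.

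The main obstacle is precisely this volume asymptotics. Unlike the discrete case, there is no closed factorial formula for the volume of a slice of $B_n$, and even $\Vol(B_n)$ itself is delicately behaved. I would attack it by the maximum-entropy / typical-table method of Barvinok and Hartigan: approximate the log fiber-volume by a second-order expansion about the symmetric (product-copula) configuration, plus a Gaussian correction governed by the Hessian of the entropy, controlling the error by a local Edgeworth/Fourier estimate uniformly over the standard region of normalized blockings. The delicate point is to identify the limiting covariance and to check that, after the $n$-rescaling, it equals the pointwise bridged-sheet variance $u(1-u)v(1-v)$ and, on a grid, the covariance encoded by $V_{\bg}$ in Equation \ref{eq:jointbrown}. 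Heuristically, if the entries behaved like independent variables of standard deviation $1/n$ the block sums would already carry variance $\approx uv$, and conditioning on unit row and column sums should bend this into the bridge covariance $u(1-u)v(1-v)$; making this rigorous is the crux, and is the reason the statement is offered only as a conjecture.

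Once the local CLT is in place, the remaining steps should follow the outline of Sections \ref{sect:grid}--\ref{sect:largegrid}, and are in fact easier because the uniform measure on the convex body $B_n$ is log-concave and each partial sum $C_{a,b}$ is a \emph{linear} functional of $M$. Log-concavity then yields the tail and H\"older estimates (the analogues of Lemmas \ref{lem:ineq} and \ref{lem:holder}) through Brunn--Minkowski/Pr\'ekopa--Leindler concentration for linear functionals, bypassing the combinatorial inequalities of the discrete proof; Lemma \ref{lem:holderbrownsheet} is unchanged. I would then couple $\tilde h$ and $\tilde z$ through the Voronoi decomposition of $V_{\bg_\gamma}$ exactly as in the proof of Theorem \ref{theo:discrete}, choosing the grid so that the lattice spacing shrinks while the H\"older error stays negligible, to conclude that $\|n\,\by_n - f\|_{C^0}\to 0$ in probability. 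The one genuinely new ingredient, and the heart of the whole argument, remains the uniform Gaussian volume asymptotics for the fibers of $B_n$.
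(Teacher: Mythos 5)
You have not reproduced the paper's proof for the simple reason that the paper contains none: this statement is Conjecture~\ref{conj:birkhoff}, which the authors support only by Metropolis-style numerical experiments comparing empirical joint distributions of $\by_n(u_i,v_i)$ with the normal distributions predicted by Equation~\ref{eq:jointbrown}. Your proposal is therefore a research program rather than a proof, and by your own admission it is incomplete at exactly the decisive point: the uniform Gaussian asymptotics for the fiber volumes of $B_n$ --- the analogue of Lemmas~\ref{lem:stirling}, \ref{lem:morestirling} and \ref{lem:moreprenormal} --- is left unestablished. This gap is genuine and not a formality. In the discrete case the whole machine rests on the exact product formula of Lemma~\ref{lem:moreexact}, which holds because a permutation decomposes column by column; for the Birkhoff polytope the fiber over a prescribed box-sum vector does \emph{not} factor over boxes (the unit row- and column-sum constraints couple all blocks within each band), so there is no closed expression to expand by Stirling. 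Moreover, the maximum-entropy volume asymptotics of Barvinok--Hartigan that you invoke carry multiplicative errors of the form $e^{O(\cdot)}$ for a \emph{fixed} block structure, whereas the coupling argument in the proof of Theorem~\ref{theo:discrete} needs a local CLT uniform over the entire set $L_{\bg_\gamma,\eta}$ of an $m\times m$ grid with $m\approx n^{1/20}$ growing with $n$. Supplying that uniformity is the heart of the problem and the reason the authors stop at a conjecture.

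That said, the surrounding architecture of your plan is sound and consistent with what the paper asserts. Your scaling analysis is correct: entries of a uniform point of $B_n$ fluctuate at order $1/n$, so block sums $C_{a,b}-\frac{ab}{n}$ fluctuate at order $1$, which is why the conjecture normalizes by $n\,\by_n$ rather than $\sqrt{n}\,\by_n$; and your heuristic bending of the independent-entry variance $uv$ into the bridge variance $u(1-u)v(1-v)$ matches both Corollary~\ref{coro:normal} and the grid covariance encoded by $V_\bg$. Two secondary caveats if you pursue this route. First, log-concavity of the uniform measure on $B_n$ yields only exponential, not Gaussian, tails for linear functionals; this still defeats the $n^3$ union bound in the analogue of Lemma~\ref{lem:holder}, but you must separately prove a variance bound for block sums over thin blocks playing the quantitative role of $\cC < 2n^{\alpha-1}$ in the paper's use of Lemma~\ref{lem:ineq} --- that does not follow from Pr\'ekopa--Leindler alone. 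Second, once a sufficiently uniform local CLT is in hand, the Voronoi coupling of $\tilde h$ and $\tilde z$ and the H\"older bookkeeping of Section~\ref{sect:largegrid} do transfer essentially verbatim, so your assessment of where the one genuinely new ingredient lies is accurate.
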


More precisely: we select points $(u_1,v_1), \ldots, (u_m,v_m) \in (0,1)^2$
and compare the empirically observed joint distribution
of $\by_n(u_i,v_i)$ with the joint normal distribution
predicted by the conjecture: they match.

\goodbreak

\bigskip

\bigskip\bigskip\bigbreak

{

\parindent=0pt
\parskip=0pt
\obeylines

Juliana Freire, Nicolau C. Saldanha and Carlos Tomei,  PUC-Rio
jufreire@gmail.com
saldanha@puc-rio.br; http://www.nicolausaldanha.com
tomei@mat.puc-rio.br

\smallskip

Departamento de Matem\'atica, PUC-Rio
R. Marqu\^es de S. Vicente 225, Rio de Janeiro, RJ 22453-900, Brazil

}

\end{document}